\newcommand{\Scr}[2]{{\left(#1\,,\:#2\right)}}
\newcommand{\Sce}[2]{{\left\langle #1\,,\:#2\right\rangle}}
\newcommand{\scr}[2]{{(#1\,,\:#2)}}
\newcommand{\sce}[2]{{\langle #1\,,\:#2\rangle}}
\newcommand{\norm}[1]{{\|#1\|}}
\newcommand{\FF}{\mathrm{F}}
\newcommand{\Reell}{\mathds{R}}
\newcommand{\Complex}{\mathds{C}}
\newcommand{\Rational}{\mathds{Q}}
\newcommand{\Ganz}{\mathds{Z}}
\newcommand{\Nat}{\mathds{N}}
\newcommand{\defgleich}{\mathrel{\mathop:}=} 
\newcounter{szaehler}
\newcounter{tzaehler}
\numberwithin{szaehler}{section}
\theoremstyle{plain} 
\newtheorem{thm}[szaehler]{Theorem}
\newtheorem{xthm}[tzaehler]{Theorem}
\newtheorem{satz}[szaehler]{Proposition}
\newtheorem{cor}[szaehler]{Corollary}
\newtheorem{lem}[szaehler]{Lemma}
\theoremstyle{definition}
\newtheorem{defn}[szaehler]{Definition}
\newenvironment{bem}{\textbf{Remark \stepcounter{szaehler}\arabic{section}.\arabic{szaehler}.}\hspace{1em}}{~\hfill$\diamond$}
\newenvironment{introenv}[2]{\begin{center}\begin{minipage}{12cm}\textbf{#1 \ref{#2}.}\par\begin{it} }{\end{it}\end{minipage}\end{center}}
\newcommand{\eps}{\varepsilon}
\newcommand{\bei}[1]{\hspace{0.2ex}\rule[-1.5ex]{0.04em}{3ex}{\rule[-0.9ex]{0pt}{0pt}}_{\hspace{0.2ex}#1}}
\newcommand{\tensor}{\otimes}
\newcommand{\be}{\begin{equation}}
\newcommand{\ee}{\end{equation}}
\DeclareMathOperator{\supp}{supp}
\DeclareMathOperator{\vol}{vol}
\DeclareMathOperator{\sign}{sign}
\newcommand{\astm}{\ast^{\scriptscriptstyle M}}
\newcommand{\astz}{\ast^{\scriptscriptstyle Z}}
\newcommand{\ua}{\underline{a}}
\newcommand{\ud}{\underline{d}}
\newcommand{\tC}{{\widetilde C}}
\newcommand{\tE}{{\widetilde E}}
\renewcommand{\H}{{\mathscr H}}
\renewcommand{\Re}{\text{Re}\,}
\renewcommand{\Im}{\text{Im}\,}
\renewcommand{\rho}{\varrho}
\newcommand{\shriek}{{!}}
\DeclareMathOperator{\res}{res}
\DeclareMathOperator{\im}{im}
\DeclareMathOperator{\dom}{dom}
\DeclareMathOperator{\bild}{im}
\newcommand{\ostrich}[1]{#1\hspace{-1.5ex}\raisebox{1.9ex}{\rule{1.4ex}{0.3pt}}}
\newcommand{\bH}{{\ostrich{H}}}
\newcommand{\bX}{{\bar{X}}}
\newcommand{\pfrac}{\binom}
\DeclareMathOperator{\FP}{FP}
\newcommand{\Hor}{{\mathcal H}}
\renewcommand{\Vert}{{\mathcal V}}
\title{A Hodge-type Theorem for Manifolds\\ with fibered cusp metrics}
\author{J\"orn M\"uller}
\begin{document}

\begin{abstract}
A manifold with fibered cusp metrics $X$ can be considered as a geometrical generalization of locally symmetric spaces of $\Rational-$rank one at infinity. We prove a Hodge-type theorem for this class of Riemannian manifolds, i.e.\ we find harmonic representatives of the de Rham cohomology $H^p(X)$. Similar to the situation of locally symmetric spaces, these representatives are computed by special values or residues of generalized eigenforms of the Hodge-Laplace-Operator on $\Omega^p(X)$.
\end{abstract}

\maketitle
\section*{Introduction and Statement of Results}

The classical theorems of geometric topology, such as the Hodge theorem, the signature
theorem and the index theorem, reveal a profound relationship between analysis of 
classical linear operators over a smooth compact manifold and the topology of the manifold.
Since their proofs in the middle of the last century, there has been much interest in 
extending these theorems to more general settings, where the manifolds may either
be noncompact or may have singularities.  The analytic approaches have often involved
creation of pseudodifferential operator calculi suited to certain geometric settings, 
such as the $b$-calculus of Melrose.  Another approach has sprung from techniques
in analytic number theory, and the analysis of the Laplacian over noncompact 
 locally symmetric spaces, as for example in the work of G.~Harder.

Recall that the classical Hodge theorem states that the natural map from harmonic
forms over a compact smooth manifold to de Rham cohomology classes is an isomorphism.
%
%
In the situation when  $X$ is not compact or is not smooth, there is no such general statement. One possible extension is to consider square integrable harmonic forms $\H_{(2)}(X)$ and identify them  with a topologically defined space.
For several geometric situations,  including manifolds with cylindrical ends (\cite{APS}), conical singularities (\cite{chgr}) and locally symmetric spaces (e.g. \cite{zucker}, \cite{saper}), such theorems of ``Hodge type'' have been found.

Manifolds with fibered cusp metrics can be considered as a geometrical generalization of $\Rational$-rank one locally symmetric spaces at ``infinity'' as well as of manifolds with cusps or cylindrical ends.
In \cite{hunsicker1} methods from the  $\phi-$calculus developed by Melrose \cite{Mel}, Mazzeo, Vaillant \cite{vaill} and others 
have been used
to find an identification of $\H_{(2)}^p(X)$ with a subspace of the intersection cohomology.

We want to take another approach and identify the de Rham cohomology of a manifold with fibered
cusps with a space of harmonic forms.
Generally these forms will not be square integrable.
In the early paper \cite{harder} of G.~Harder such a theorem for locally symmetric spaces of $\Rational$-rank one was proved. In this situation the representatives of the de Rham cohomology classes are either $L^2-$harmonic forms, or they are defined by special harmonic values of Eisenstein series. In \cite{mu-unpub} W.~M\"uller suggested to use  analytical arguments to find a similar theorem in the context of manifolds with cusps, by replacing  Eisenstein series with generalized eigenforms of the Laplacian. His method relies on
an explicit parametrix construction for the resolvent which allows
an investigation of the scattering- and spectral theory of the underlying manifold.
We take these ideas as the starting point to prove a Hodge-type theorem for a large class of manifolds with fibered cusp metrics.

For a Riemannian submersion $M\to B$ with $f$-dimensional standard fiber $F$ we equip
$Z=\mathbb{R}^+\times M$ with the Riemannian metric
$g^Z=du^2+\pi^*g^B+e^{-2u} g^{F_b}$, where $g^B$ and $g^{F_b}$ are the Riemannian metrics on
 $B$ resp.~the vertical tangent bundle $TF$.
A Riemannian manifold $X$ is called manifold with fibered cusp metric, if  $X$ is isometric to $(Z, g^Z)$ outside a compact set.
As long as the fibers are not points, $X$ is a complete manifold with finite volume. The splitting
$TM=\pi^*TB\oplus TF$ of the tangent bundle induces an isomorphism $\Omega^*(M)=\Omega^*(B,W)$, where
$W$ is the vector bundle whose fiber over $b\in B$ is $C^\infty(F_b, \Lambda(T^*F)|_{F_b})$. On $W$ one can define a Hermitean metric and thus the fiber-wise ``vertical''
Laplacian.  In this way we can 
define `fiber-harmonic forms' over $M$, and from there, over $Z$.   These turn out to play
an crucial role in the spectral theory of the Laplacian on $X$.  

For the analysis of the spectral theory it is necessary to ensure that fiber-harmonic forms are an invariant subspace of $\Delta_Z$. To that end we impose two obstructions on the submersion $M\to B$, namely (A) that the horizontal distribution is integrable and (B) that fiber-harmonic forms are an invariant subspace of the ``horizontal Laplacian'' $\Delta_{1,0}=d^{1,0}\delta^{1,0}+\delta^{1,0}d^{1,0}$.
To find a geometrical interpretation of condition (B) is an open question, but
we can prove the following sufficient criterion
\begin{introenv}{Proposition}{satzproj}
If the mean curvature $H$ of the fibers is projectable, then  the decomposition \eqref{faserorth} is invariant under $d^{1,0}$ and $\delta^{1,0}$.
\end{introenv}

Let $r:H^p(X)\to H^p(M)$ be the standard restriction map induced from the inclusion $M \subset X$. Using generalized eigenforms of $\Delta_X$ we will explicitely construct a map $\Xi$ from $\H^p(M)$ into the smooth harmonic forms on $X$, which extends to a map $H^p(M)\to H^p(X)$.
We obtain the following theorem:
\begin{introenv}{Theorem}{thm1}

Let $H_!^p(X)\defgleich\bild (H_c^p(X)\to H^p(X))$ be the image of cohomology with compact support in  the de~Rham-cohomology.
Let $H_{\mathrm{inf}}^p(X)$ be a complementary space to $H_!^p(X)$ in $H^p(X)$,\index{$H_{\rm inf}^p(X)$}\index{$H_{\shriek}^p(X)$}
\begin{equation}
H^p(X)=H_!^p(X)\oplus H_{\mathrm{inf}}^p(X).\label{thsplit}
\end{equation}
Let $R^p\defgleich\bild (r: H^p(X)\to H^p(M))$.
Then $\Xi(R^p)$ is isomorphic to $H_{\text{\upshape{inf}}}^p(X)$
and\/ $\Xi(H^p(M))=\Xi(R^p).$
\end{introenv}
Since all classes on the right hand side of \eqref{thsplit} have harmonic representatives, this indeed is a ``Hodge-type'' theorem.

The paper is organized as follows.  
First we investigate the spectral theory of the Hodge-Laplace operator on $X$. To that end we first examine the spectral theory of the noncompact end $Z$ using the Friedrichs extension of the Laplacian on compactly supported forms.
As mentioned above, fiber-harmonic forms play a key role. In the classical theory of automorphic forms, eigenforms of the Laplacian which are orthogonal to fiber-harmonic forms are known as ``cusp forms''.
It turns out that fiber-harmonic forms determine the essential spectrum, whereas  
the cusp forms are associated with the point spectrum of $\Delta_Z$, i.e. they are $L^2-$eigenforms (\textbf{Propositions  \ref{kptr}} and \textbf{\ref{wspek}}).

One important consequence of the two conditions $A$ and $B$ introduced above is \textbf{Proposition \ref{satzspec}}, which states that the de Rham cohomology of $M$ can be identified with $\Delta_{1,0}$-harmonic forms on $B$ with values in harmonic sections in the fibers:
\[
 H^p(M)\cong\bigoplus_{r+s=p} \H^r(B,\H^s(F)).
\]
This allows the parametrix construction of the resolvent for the Laplacian from the setting of manifolds with cusps (e.g. \cite{mu-rank1}) to be carried out in our more general fibered setting (section \ref{parakapform}).
With this explicit knowledge of the resolvent kernel, the spectral decomposition of $\Delta_Z$  can be computed.
Furthermore, an argument from mathematical scattering theory shows
\begin{introenv}{Proposition}{acspec1}
The absolutely continuous part $L^2_{ac}\Omega^p(X)$ of $\dom\Delta_X$  is unitarily equi\-valent to the fiber-harmonic forms $\Pi_0L^2\Omega^p(Z)$.
\end{introenv}
In this sense, the spectral theory on $X$ is determined by the spectral theory of $\Delta$ on $Z$.


The spectral resolution of $L^2_{ac}\Omega^p(X)$ is given by generalized eigenforms (GEs). In the theory of locally symmetric spaces, these are given by Eisenstein series. Since GEs are used frequently in this paper, it is useful to briefly recall some facts about them. Their construction in section \ref{keif} is similar to \cite{mu-cusprk1}.
For every fiber-harmonic form on $M$--or every cohomology class in $H^p(M)$--there exists a GE on $X$ which depends on a parameter on an infinite covering of $\Complex$, the spectral surface. Near $0$ this covering is twofold, and the  $E(s,\phi)$ are parametrized by harmonic forms $\phi$ on $M$. 
Let $\phi\in \H^{p-k}(B,\H^k(F))$ and set $d_k=|f/2-k|$. Then
 $s\mapsto E(s,\phi)$ is meromorphic for $s\in U\subset \Complex$ and $E(s,\phi)\in \Omega^p(X)$ satisfies a growth condition on the end $Z$. Furthermore
\[
\Delta E(s,\phi)=s(2d_k-s)E(s,\phi),
\]
and $E(s,\phi)$ is uniquely determined by these conditions.
From the conditions one can also conclude that the asymptotic expansion on the end $Z$ of the fiber-harmonic part of $E$ is given by
\begin{equation}
 \Pi_0 E(s,\phi) = e^{(f/2-k-d_k+s) r}\phi+\sum_{l=0}^f e^{(f/2-l+d_l-s) r}T^{[l]}(s)(\phi)
+G(s,\phi),\label{introasym}
\end{equation}
where $G(s,\phi)\in L^2\Omega^p(X)$ for $\Re(s)>d_k$ and $T^{[l]}(s)$ are linear operators $\H^*(M)\to\H^*(B,\H^l(F))$, which are meromorphic in $s$. In the context of mathematical scattering theory, the $T^{[l]}(s)$ are referred to as scattering operators.

In view of the Hodge-theorem the spectral value $s=2d_k$ is of particular interest, because if $E(s,\phi)$ is defined there,  it is harmonic. Thus the next task is to identify the poles of $s\mapsto E(s,\phi)$.

In the theory of automorphic forms, information about the poles of generalized eigenforms can be read off from product formulas known as the Maa{\ss}--Selberg relations. We will derive a similar formula for the inner product of GEs which are perpendicular to fiber-harmonic forms outside of a compact set (\textbf{Proposition \ref{maass-selb1}}).
This provides us with detailed information about the location and order of poles of $s\mapsto E(s,\phi)$. In particular, the order of a pole in $s=2d_k$ coincides with the maximal order of a pole of the scattering operator $T(s)$ and is at most one (section \ref{epole}).

Moreover the Maa{\ss}--Selberg relations allow us to derive important properties of the residues of the scattering operator and GEs: Let ${\tC}^{[k]}\defgleich \res_{s=2d_k}T^{[l]}(s) $ and $\tE(\phi)\defgleich\res_{s=2d_k}E(s,\phi)$.
Then $\H^{*,k}(M)\defgleich\H^*(B,\H^k(F))$ splits into the orthogonal direct sum
\[
 \H^{*,k}(M)=\ker {\tC}^{[k]}\oplus \bild {\tC}^{[k]},
\]
and the residue $\tE(\phi)$ is in $L^2\Omega^p(X)$.

For the middle fiber degree we have even more information about $\H^{*,f/2}(M)$. 
Again from the Maa{\ss}--Selberg relations and the functional equations for $E$ we can conclude that
$T^{[f/2]}(s)$ is regular at $s=2d_k=0$ and that
$T^{[f/2]}(0)$ is  selfadjoint and an involution on $\H^{*,f/2}(M)$. Thus 
$\H^{*,f/2}(M)=\H_+\oplus \H_-$, where $\H_\pm$ are the $\pm 1$-eigenspaces of $T^{[f/2]}(0)$.

Now we have all the information required for the classification of harmonic representatives of $H^p(X)$. We use
the following idea that goes back to G.~Harder.

Let $\phi\in\H^r(B,\H^k(F))$.
If the generalized eigenform $E(\cdot,\phi)$ does not have a pole at $s=2d_k$, then it is a solution of $\Delta E=0$ which is not square integrable.
Then it remains to determine under which conditions $E(2d_k,\phi)$ is closed.
For that, we employ the functional equations from
\textbf{Corollary \ref{funceq}} that are derived from the asymptotic expansion \eqref{introasym}.

If there is a pole at $2d_k$, we instead consider the residue, $\tE(\phi)$. This is a closed $L^2-$harmonic form due to \textbf{Corollary \ref{zerlres}}, thus a representative in both $H^p(X)$ and $H_{(2)}^p(X)$.

In this way to every $\phi\in \H^p(M)$ we may associate a so-called singular value, that is, a closed harmonic form $\Xi(\phi)\in \Omega^p(X)$. We interpret $\Xi$ as a linear map $H^p(M)\to H^p(X)$.
This is the map that is needed for Theorem \ref{thm1} and  the final step is to prove the  isomorphism $\Xi(\bild r)\simeq H_{\rm{inf}}$.

The image of the restriction map $r$ is of independent interest, in fact, the proof of Theorem \ref{thm1} relies heavily on its explicit description given in 
\begin{introenv}{Theorem}{thm2}
Let
\[
\mathcal{A}^p=\bigoplus_{k=0}^f \mathcal{A}^{(p-k,k)}\qquad\text{with}\qquad \mathcal{A}^{(p-k,k)}\defgleich
\begin{cases}
\bild {\tC}^{[k]}, & k<f/2\\
\H_+^p,&k=f/2\\
\astm\ker {\tC}^{[f-k]}, & f/2<k\\
0,& \text{otherwise}
\end{cases}
\]
Then under the Hodge-isomorphism
\[
\bild (r: H^p(X)\to H^p(M))\simeq \mathcal{A}^p.
\]
\end{introenv}

The detailed knowledge we have obtained about $H_{\mathrm{inf}}^p(X)$ also allows us to compute 
the signature of $X$ directly in \textbf{Proposition \ref{gleichesign}}, by showing that there are involutions $\tau$ on $L^2-$harmonic forms, which commute with the construction of $\tE$:
\[
 \tau_X \tE(\omega) = \tE(\tau_Z\omega).
\]
This recovers the identity 
$
L^2-\sign(X)= \sign(X_0,\partial X_0)
$
proven in \cite{dai}.

\subsection*{Acknowledgements}
This paper includes part of my Ph.D.~thesis.  I am indebted to my advisor W.~M\"uller for his mathematical guidance and deep insight. Also I am grateful to Eugenie Hunsicker, Alexander Strohmaier and Gregor Weingart for their interest and helpful discussions. 
Finally I would like to thank MSRI for hospitality during the program ``Analysis on Singular Spaces''.

\section{Manifolds with fibered cusp metrics}

\subsection{Manifolds with fibered cusp metrics}
\label{mfgspitz}

Let $(M,g^M)$ and $(B,g^B)$\index{$M$}\index{$B$} be closed, connected orientable Riemannian manifolds and $\pi: M\to B$  a  Riemannian submersion.
The fibers $F_b\defgleich\pi^{-1}(\{b\})$\index{$F$} are closed, and we assume that they are connected.
Since  $(M,g^M)$ is complete, $\pi: M\to B$ is a fiber bundle and all fibers are diffeomorphic, the diffeomorphism being given by horizontal lift of curves in $B$. In particular $f\defgleich\dim F$ is constant.

Let $TF$\index{$TF$} be the vertical tangent bundle, i.e.  $(TF)_y=T_y F_{\pi(y)}$. Let $g^F$ denote the family of Riemannian metrics on $TF$ induced by $g^M$.
Let $T^H M$\index{$T^H M$} be a horizontal distribution for $\pi:M\to B$, i.e.
$
 TM=T^H M\oplus TF.
$
By definition of a Riemannian submersion
$
T^H M \simeq \pi^* T B
$
is an isometry.

We define a family of metrics on $M$ by
\[
g_u^M =\pi^* g^B + e^{-2u} g^{F}, \qquad u\in\Reell^+,
\]
and equip $Z\defgleich\Reell^+\times M$ with the metric $g^Z\defgleich du^2+g_u^M$.\index{$Z$}

A manifold
$X$\index{$X$} is called  \emph{manifold with fibered cusp metric}, if $X$ is isometric to $Z$ outside of a compact set  $X_0$\index{$X_0$}. $X$ is a complete Riemannian manifold.

\subsubsection{Examples}
\begin{enumerate}[a)]
 \item 
 If the base $B$ is a point, the metric on $Z$ is
\[
du^2+e^{-2u} g,
\]
so that $Z$ is a cusp with base $M$ and $X$ is a manifold with cusp end.
Similarily, if $\pi=\text{id}$, then $X$ is a manifold with cylindrical end as considered in \cite{APS}.

\item  In \cite{mu-cusprk1}, \cite{mu-rank1}  W.~M\"uller considers locally symmetric spaces of $\Rational-$rank $1$. 
Here we consider only the case of a single cusp: Let $X=X_0\cup_M Z$, where $X_0$ is a compact manifold with boundary $M$, and $Z=\Reell^+\times M$ isometric
to a cusp of a locally symmetric space with  $\Rational-$rank $1$. 
Then there is a fiber bundle $M\to B$ where the fiber $F=\Gamma\cap N\backslash N$ is a compact nilmanifold, and $B$ is a locally symmetric space. The metric on $\Reell^+\times M$ then locally takes the form
\begin{equation}
 g^Z=du^2+\pi^*g^B+e^{-2 a u} g_1(b)+e^{-4 a u} g_2(b),\label{cprk1}
\end{equation}
where $a>0$ and $g^B$ is the metric on $B$. $g_1(b),  g_2(b)$ have support along the fibers $F_b$ over $b\in B$.
The volume form on $Z$ is given by
\[
 \vol_Z= e^{-q u} du\, \vol_B \vol_{F_b},
\]
for some $q>0$. If as the symmetric space we choose the $n$-dimensional hyperbolic space
$
SO_e(n,1)/SO(n)
$, then the metric \eqref{cprk1} takes the form
\[
g^Z=du^2+\pi^*g^B+e^{-2 a u} g_1(b),
\]
see Proposition 2.9 in \cite{weber} and e.g. \cite{carped}. 
A similar situation is considered in the work \cite{harder2}, there $X=\Gamma\backslash(\mathds{H}\times \ldots \times \mathds{H}\times Y\times\ldots \times Y)$ 
with the upper complex half plane $\mathds{H}$ and the 3-dimensional hyperbolic space $Y,$ and a torsion free conguence subgroup $\Gamma\subset SL(2,\mathfrak{O})$.
\end{enumerate}

Now we want to recall some facts about connections in the Riemannian fiber bundle $M\to B$, following
 \cite{bismut-lott}, pg. 323-329 and \cite{bismut-cheeger}, pg. 53-55.

As a bundle of $\Ganz-$graded algebras over $M$ we have the isomorphism
\begin{equation} \label{bm3.3}
 \Lambda(T^*M)\simeq \pi^*(\Lambda(T^*B))\otimes \Lambda(T^* F).
\end{equation}

Let $W$\index{$W$} be the smooth infinite dimensional $\Ganz-$graded vector bundle over $B$, whose fiber $W_b$ over  $b\in B$ is
$C^\infty(F_b, \Lambda(T^*F)|_{F_b})$.
This means
\[
 C^\infty(B,W^{(k)})\simeq C^\infty(M,\Lambda^k(T^*F))
\]
and we have an isomorphism of $\Ganz-$graded vector spaces
\[
 \Omega^\bullet(M)\simeq \Omega^\bullet(B,W)\defgleich C^\infty(B, \Lambda^\bullet(T^*B)\otimes W).
\]

%
Let $d^M$ be the outer derivative on $\Omega(M)$. Then as usual $(d^M)^2=0$. 
We consider the decomposition of $d^M$ in horizontal and vertical components.
Let
\[
 \Omega^{a,b}(M)\defgleich \Omega^a(B,W^{(b)})\;,\qquad \Omega^{*,k}\defgleich\bigoplus_{j=0}^{n-k} \Omega^{j,k},\quad n=\dim M
\]
and consider the decomposition $d^M=\sum_{i+j=1}d^{i,j}$
with
$
 d^{i,j}:\Omega^{a,b}\to\Omega^{a+i,b+j}.
$
The fiber-degree operator $\kappa$ is the linear operator \index{$\kappa$}$\kappa:\Omega^*(M)\to \Omega^*(M)$, which is defined by 
$
\kappa \phi= k \phi
$
for $\phi\in \Omega^{*,k}$.

In \cite[Proposition 3.4]{bismut-lott} and \cite[Proposition 10.1]{BGV} it is proven that
\begin{equation} \label{dmzerl}
d^M=d^{0,1}+d^{1,0}+d^{2,-1}
\end{equation}
where 

\begin{itemize}
\item $d^{0,1}=d^F\in C^\infty(B, \text{Hom}(W^\bullet,W^{\bullet+1}))$ is the differential along the fibers.
\index{$d^F$}

\item $d^{1,0}$ is given as follows. Let $X$ be a smooth vector field on $B$ with horizontal lift $X^H\in C^\infty(M, T^H M)$.
For $s\in C^\infty(B,W)$ and a vector field $X$ on $B$, we define a covariant derivative on $W$ that preserves the $\Ganz-$grading by
\[
 \nabla^W_X s= \text{Lie}_{X^H}s.
\]
$\nabla^W$  is extended in a unique way to a covariant derivative 
\[
   d^{1,0}:\Omega^\bullet(B,W)\to \Omega^{\bullet+1}(B,W),
\]
so that Leibniz' rule
\begin{equation}
 d^{1,0}(\alpha\wedge\theta)=d^B\alpha\wedge \theta +(-1)^r \alpha\wedge d^{1,0}\theta,\qquad \alpha\in \Omega^r(B),\theta\in \Omega^\bullet(B,W)\label{leibniz}
\end{equation}
holds.

\item Finally
\[
d^{2,-1}\in \Omega^2(B,\text{Hom}(W^\bullet,W^{\bullet-1}))
\]
is given by inner multiplication with curvature of the fibers, i.e. for a pair $(X,Y)$ of vector fields on $B$  
\begin{equation}
 d^{2,-1}(X,Y)(\omega)=i_{\Vert[Y^H,X^H]} \omega\label{defd21}
\end{equation}
where $\Vert$\index{$\Vert$} is the projection from $TM$ onto $TF$.
\end{itemize}

\subsection{Fibre-harmonic forms}\label{kfash}

Let $\ast$ be the  fiberwise Hodge-star-operator with respect to $g^F$. This is an operator on 
\[
C^\infty(M, \Lambda(T^*F))\simeq C^\infty(B,W).
\]
In this way $W$ obtains a Hermitean metric $h^W$, so that for $s, s'\in C^\infty(B,W)$ and $b\in B$
\begin{equation*}
 (\sce{s}{s'}_{h^W})(b) = \int_{F_b} s(b)\wedge \ast s'(b) = \int_{F_b} \sce{s(b)}{s'(b)}_{F_b} \vol_{F_b}.
\end{equation*}
Here $\sce{\cdot}{\cdot}_{F_b}$ is the scalar product on $\Omega^*(F_b)$ induced by $g^F$.
Using the metric $g^B$ on $B$, the hermitean fiber product $h^W$ extends to $\Omega^*(B,W)$.

\begin{defn}
Let $\delta^{i,j}$ be the fiberwise formal adjoint of $d^{i,j}$, and $\delta^F\defgleich\delta^{0,1}$.
\end{defn}\index{$\delta^F$}\index{$\delta^{i,j}$}


Let $H^*(F)=\bigoplus_{i=0}^f H^i(F)$ be the $\Ganz-$graded vector bundle over $B$, whose fiber over $b\in B$
is the cohomology $H^*(F_b)$ of the complex $(W_b, d^{F_b})$.

Let $V=\delta^F-d^F \in C^\infty(B, \text{End}(W))$. The Hodge-theorem gives an isomorphism
$
\H(F_b)\defgleich \ker(V_b)\cong H(F_b)
$
and this induces an isomorphism
\begin{equation} \label{h234}
\H^*(F)\defgleich \ker(V)\cong H^*(F)
\end{equation}
of $\Ganz-$graded finite dimensional vector bundles. 
\begin{defn}
An element in $\omega\in \Omega^*(B,\H^*(F))$ is called \emph{fiberwise harmonic}, or simply \emph{fiber-harmonic}.
\end{defn}
Let $\Delta^F\defgleich d^F\delta^F+\delta^F d^F$ be the Hodge-Laplace-operator along the fibers. Since the fibers of $M\to B$ are closed submanifolds, $\omega\in \Omega^*(B,W)$ is fiber-harmonic if and only if  $\Delta^F\omega=0$.

Since $\ker(V)$ is a subbundle of $W$,  it inherits a hermitean metric from $h^W$ by projection, and
with respect to $h^W$ there is a direct orthogonal decomposition
\begin{equation}
 \Omega^*(B,W)=\Omega^*(B,\H^*(F))\oplus \Omega^*(B,\H^*(F)^\perp)\label{faserorth}.
\end{equation}
Let $\Pi_0$\index{$\Pi_0$} resp. $\Pi_\perp$\index{$\Pi_\perp$} denote the projections onto $\Omega^*(B,\H^*(F))$ resp. $\Omega^*(B,\H^*(F)^\perp)$.
Then $\Pi_0 d^{1,0}$ is a connection on $\Omega(B,\H(F))$, which can be understood as a connection on
 $\Omega(B,H(F))$ by \eqref{h234}.

The following statement follows directly from Propositions 2.5 and 2.6 in \cite{bismut-lott}.
\begin{satz}\label{flachzus}
The connection $\nabla^{H(F)}=\Pi_0 d^{1,0}$ on $\Omega(B, H(F))$ is flat, i.e.  $(\nabla^{H(F)})^2=0$.
\end{satz}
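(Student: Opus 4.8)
The plan is to show that the curvature of the connection $\nabla^{H(F)}=\Pi_0 d^{1,0}$ vanishes. The natural starting point is the identity $(d^M)^2=0$ together with the decomposition \eqref{dmzerl}, namely $d^M=d^{0,1}+d^{1,0}+d^{2,-1}$. Squaring this and collecting terms according to their bidegree $(i,j)$ with $i+j=2$ yields a system of relations among the three components. The crucial one lives in bidegree $(2,0)$, and reads $(d^{1,0})^2+d^{0,1}d^{2,-1}+d^{2,-1}d^{0,1}=0$; in other words, $(d^{1,0})^2=-(d^Fd^{2,-1}+d^{2,-1}d^F)$, so the square of the horizontal derivative is a graded commutator of $d^F$ with the curvature term $d^{2,-1}$. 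This is exactly the content of Propositions 2.5 and 2.6 of Bismut--Lott that the statement invokes.

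Next I would sandwich this identity between the projections $\Pi_0$ onto fiber-harmonic forms. By definition $(\nabla^{H(F)})^2$ is computed, under the identification \eqref{h234}, by applying $d^{1,0}$ and projecting at each stage; the key technical point is that one may replace the fully projected $(\Pi_0 d^{1,0})^2$ by $\Pi_0(d^{1,0})^2\Pi_0$, because the intermediate $\Pi_\perp$ contribution drops out. Concretely, one writes $\Pi_0 d^{1,0}\Pi_0 d^{1,0}\Pi_0 = \Pi_0 d^{1,0}(1-\Pi_\perp)d^{1,0}\Pi_0 = \Pi_0(d^{1,0})^2\Pi_0 - \Pi_0 d^{1,0}\Pi_\perp d^{1,0}\Pi_0$, and one must argue that the final term vanishes. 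This is where the harmonicity enters decisively.

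To kill both the correction term and the right-hand side of the bidegree-$(2,0)$ relation, I would use that on fiber-harmonic forms $d^F$ and $\delta^F$ both vanish, hence $\Pi_0 d^F=d^F\Pi_0=0$ and likewise for $\delta^F$. For the right-hand side $\Pi_0\bigl(d^Fd^{2,-1}+d^{2,-1}d^F\bigr)\Pi_0$: the second summand has a $d^F$ acting directly on a fiber-harmonic argument and so annihilates it, while in the first summand the outer $d^F$ produces an exact form along each fiber, which is orthogonal to the harmonic subspace and is therefore killed by the outer $\Pi_0$. Thus $\Pi_0(d^{1,0})^2\Pi_0=0$. For the cross term $\Pi_0 d^{1,0}\Pi_\perp d^{1,0}\Pi_0$, I expect a Hodge-theoretic orthogonality argument: on the harmonic subbundle the projected derivative $\Pi_0 d^{1,0}$ agrees with the genuine flat structure, and the non-harmonic detour contributes only through the fiberwise-exact/coexact pieces which are again annihilated after projecting back with $\Pi_0$. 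Combining, $(\nabla^{H(F)})^2=0$.

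The main obstacle is the rigorous disposal of the cross term $\Pi_0 d^{1,0}\Pi_\perp d^{1,0}\Pi_0$, since $d^{1,0}$ need not commute with the projections $\Pi_0,\Pi_\perp$ (these projections depend on $b\in B$ through the varying fiber metric $g^F$, so $d^{1,0}\Pi_0\neq\Pi_0 d^{1,0}$ in general). The clean way around this is to avoid the cross-term decomposition altogether and instead verify flatness by a direct computation: evaluate $\nabla^{H(F)}$ on a local frame of harmonic sections and show the curvature two-form, interpreted via \eqref{h234} on $H(F)$, reduces to a cohomology class represented by a fiberwise-exact form, which is zero in $H^*(F_b)$. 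Since the statement is asserted to follow directly from Bismut--Lott Propositions 2.5 and 2.6, the cleanest route is simply to quote the bidegree-$(2,0)$ relation from there and then observe that both its summands become cohomologically trivial, hence vanish on the harmonic representatives.
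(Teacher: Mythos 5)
Your reduction is sound up to exactly the point you flag yourself, and that point is a genuine gap: the bidegree-$(2,0)$ component of $(d^M)^2=0$, namely $(d^{1,0})^2=-(d^Fd^{2,-1}+d^{2,-1}d^F)$, together with the orthogonality of fiberwise exact forms to $\H^*(F)$, gives you $\Pi_0(d^{1,0})^2\Pi_0=0$ --- but that is not $(\Pi_0 d^{1,0})^2=0$, and the cross term $\Pi_0 d^{1,0}\Pi_\perp d^{1,0}\Pi_0$ is left standing. Neither of your proposed escape routes closes it: ``observe that both summands become cohomologically trivial, hence vanish on the harmonic representatives'' again only controls $\Pi_0(d^{1,0})^2\Pi_0$. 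The missing step is short. For $\phi\in\Omega^*(B,\H^*(F))$, the bidegree-$(1,1)$ relation $d^{1,0}d^F+d^Fd^{1,0}=0$ shows that $d^{1,0}\phi$ is $d^F$-closed; a $d^F$-closed form has no $\delta^F$-coexact component in the fiberwise Hodge decomposition, so $\Pi_\perp d^{1,0}\phi=d^F\alpha$ for some $\alpha$; and then $d^{1,0}(d^F\alpha)=-d^F(d^{1,0}\alpha)$ is again fiberwise exact, hence killed by the outer $\Pi_0$. With this the cross term vanishes and your operator-level computation is complete.

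For comparison: the paper offers no argument at all here, only the citation of Propositions 2.5 and 2.6 of Bismut--Lott, where the issue you are fighting never arises because the connection is defined on the cohomology bundle $H(F)$ rather than on harmonic representatives. There one sets $\nabla[\omega]\defgleich[d^{1,0}\omega]$, well defined since $d^{1,0}$ anticommutes with $d^F$; the composite is then tautologically $[(d^{1,0})^2\omega]=[-d^F(d^{2,-1}\omega)]=0$ on closed $\omega$, with no cross term to dispose of, and $\Pi_0 d^{1,0}$ on $\Omega^*(B,\H^*(F))$ is this connection transported through the Hodge isomorphism \eqref{h234}. That is the precise form of the ``direct computation on cohomology'' you gesture at in your last paragraph; carried out, it is both shorter and cleaner than the projection bookkeeping. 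A minor attribution point: the decomposition $d^M=d^{0,1}+d^{1,0}+d^{2,-1}$ and its component relations are Proposition 3.4 of Bismut--Lott, while Propositions 2.5 and 2.6 are the abstract superconnection statements from which the flatness of the induced connection on cohomology follows.
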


Generally the decomposition \eqref{faserorth} is not invariant under $d^{1,0}$, i.e. $\Pi_\perp d^{1,0} \Pi_0 \neq 0$. We will give a sufficient criterion for invariance in Proposition \ref{satzproj} below.
\begin{defn}
Let\index{$H$} $\Hor:C^\infty(M, TM) \to C^\infty(M, T^HM)$ be the projection onto the horizontal distribution and $\{U_j\}_{j=1}^r\subset C^\infty(M, TF)$ be a vertical orthonormal frame. With the Levi-Civita connection  $\nabla^M$  on $C^\infty(M, TM)$ we define the \emph{mean curvature $H$ of the fibers} of $\pi:M\to B$ to be the horizontal vector field\index{$H$}
\[
H\defgleich\sum_{j=1}^{\dim F}\Hor\nabla^M_{U_j}{U_j}.
\] 
\end{defn}

Recall that a vector field on $M$ is called \emph{projectable}, if there is a vector field $\bH$ on $B$ such that $d\pi H=\bH$. A projectable horizontal vector field is called \emph{basic}.
\begin{satz}\label{satzproj}
If the mean curvature $H$ of the fibers is projectable, then  the decomposition \eqref{faserorth} is invariant under $d^{1,0}$ and $\delta^{1,0}$.
\end{satz}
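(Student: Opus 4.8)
The plan is to prove the invariance in the form $\Pi_\perp d^{1,0}\Pi_0=0$, i.e.\ that $d^{1,0}$ sends fiber-harmonic forms to fiber-harmonic forms, and to obtain the statement for $\delta^{1,0}$ from the same computation. Since $d^{1,0}$ acts on $\Omega^*(B,W)$ by wedging with forms on $B$ and applying the covariant derivative $\nabla^W_X=\mathrm{Lie}_{X^H}$ in the $W$-direction, this is equivalent to showing that for every basic horizontal field $X^H$ the operator $\nabla^W_X$ preserves the bundle $\H^*(F)=\Kern\Delta^F$ of fiberwise harmonic forms.

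First I would record what is automatic. Comparing bidegrees in $(d^M)^2=0$ for the decomposition \eqref{dmzerl}, the $(1,1)$-component gives $d^{1,0}d^F+d^F d^{1,0}=0$, that is $[\nabla^W_X,d^F]=0$ for every $X$. Hence if $\Delta^F v=0$ then $d^F\nabla^W_X v=\nabla^W_X d^F v=0$, so $\nabla^W_X v$ is automatically $d^F$-closed. As harmonicity along the (closed) fibers is closedness together with co-closedness, the entire content of the Proposition is the co-closedness $\delta^F\nabla^W_X v=0$, equivalently $[\nabla^W_X,\delta^F]\,v=0$ for $v\in\H^*(F)$.

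Next I would bring in the fiber metric, which is where the mean curvature appears. Writing $\delta^F=\pm\ast\,d^F\ast$ and using $[\nabla^W_X,d^F]=0$ yields
\[
[\nabla^W_X,\delta^F]=\pm\big(C_X\,d^F\ast+\ast\,d^F\,C_X\big),\qquad C_X\defgleich[\nabla^W_X,\ast]=\mathrm{Lie}_{X^H}\ast,
\]
so everything is governed by the horizontal variation $C_X$ of the fiberwise Hodge star. Now $C_X$ is the linearisation of $\ast$ under the variation $\dot g^F=\mathrm{Lie}_{X^H}g^F$ of the fiber metric, and the standard formula for the derivative of the Hodge star expresses $\ast^{-1}C_X$ algebraically through $\dot g^F$. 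Its trace part is exactly the mean curvature, $\tfrac12\,\mathrm{tr}_{g^F}(\mathrm{Lie}_{X^H}g^F)=-\langle H,X^H\rangle$. Evaluating on a harmonic $v$, the element $\ast v$ is again harmonic, so $d^F\ast v=0$ and the first summand above collapses; one is then left with $\pm\ast\,d^F(C_X v)$, whose trace (conformal) contribution is proportional to $d^F\langle H,X^H\rangle\wedge\ast v$. By the hypothesis $d\pi\,H=\bH$ the function $b\mapsto\langle H,X^H\rangle$ is constant along each fiber, so $\grad_F\langle H,X^H\rangle=0$ and this term drops out, giving $\delta^F\nabla^W_X v=0$ and hence $\nabla^W_X v\in\H^*(F)$. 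Because the criterion $\grad_F\langle H,X^H\rangle=0$ is symmetric under passing to fiberwise adjoints, the identical argument with $\ast$ and $d^F$ interchanged gives $\Pi_\perp\delta^{1,0}\Pi_0=0$, so \eqref{faserorth} is invariant under both $d^{1,0}$ and $\delta^{1,0}$.

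The main obstacle is the middle step: correctly isolating the mean-curvature contribution inside $C_X=\mathrm{Lie}_{X^H}\ast$. The variation of the Hodge star also carries the trace-free part of the second fundamental form of the fibers, and the delicate point is to verify that, after composition with $d^F$ and evaluation on harmonic forms, the only surviving obstruction to co-closedness is the trace (mean-curvature) part, so that the fiber-constancy forced by projectability of $H$ is exactly the right sufficient condition. Carrying out this bookkeeping carefully—rather than the routine cancellations $[\nabla^W_X,d^F]=0$ and $d^F\ast v=0$—is where the real work lies.
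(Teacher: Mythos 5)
Your overall strategy runs parallel to the paper's: the $d^F$-closedness of $\nabla^W_X v$ is free from the bidegree $(1,1)$ component of $(d^M)^2=0$, and the whole content is the co-closedness $\delta^F\nabla^W_X v=0$, which you try to control through the variation of the fiber geometry along basic horizontal lifts. But there is a genuine gap exactly at the step you yourself flag as ``where the real work lies'': you assert, without argument, that after applying $d^F$ and evaluating on harmonic $v$ only the trace part of $C_X=\mathrm{Lie}_{X^H}\ast$ survives, so that the obstruction is proportional to $d^F\langle H,X^H\rangle$. The pointwise variation of the fiberwise Hodge star under $\dot g^F=\mathrm{Lie}_{X^H}g^F$ contains, besides the trace term $\tfrac12\mathrm{tr}_{g^F}(\dot g^F)\,\ast$, a term built from the full (trace-free) second fundamental form of the fibers contracted into $v$, and you give no reason why $\ast\, d^F$ of that term should vanish on harmonic forms. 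Verifying this \emph{is} the proposition; as written, your argument only shows that \emph{if} the trace-free part were harmless, \emph{then} projectability of $H$ would suffice. The paper avoids the pointwise decomposition of $\mathrm{Lie}_{X^H}\ast$ altogether: it differentiates the integrated, identically vanishing pairing $h^W(d^F s_1,s_2)=h^W(s_1,\delta^F s_2)=0$ along the flow of the basic lift, so that the only correction which enters is the Lie derivative of the fiber volume form, $(\mathrm{Lie}_{X^H}\omega)|_{F_b}=f_X\vol_{F_b}$ with $f_X=-g^M(H,X^H)$; one then reads off $\delta^F\nabla^W_X s_2$ as an interior product of $s_2$ with $\grad^F f_X$, which vanishes precisely because projectability makes $f_X$ constant along each fiber.

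A second gap concerns the passage from ``$\nabla^W_X$ preserves $\H^*(F)$'' to ``the decomposition \eqref{faserorth} is invariant''. Invariance under $d^{1,0}$ means both $\Pi_\perp d^{1,0}\Pi_0=0$ and $\Pi_0 d^{1,0}\Pi_\perp=0$, and the two $\delta^{1,0}$-identities are the adjoints of these; your argument (granting the step above) only addresses the first. Since $\nabla^W_X=\mathrm{Lie}_{X^H}$ is \emph{not} compatible with $h^W$, preservation of $C^\infty(B,\H^*(F))$ does not automatically yield preservation of its orthogonal complement. The paper proves this by a separate computation, using once more that projectability makes $f_X=-\pi^*g^B(\bX,\bH)$ fiberwise constant, so that $\bX\,h^W(s_1,s_2)=h^W(\nabla^W_X s_1,s_2)+h^W(s_1,\nabla^W_X s_2)-g^B(\bX,\bH)\,h^W(s_1,s_2)$ and hence $h^W(\nabla^W_X s_1,s_2)=0$ for $s_1\perp\H^*(F)$ and $s_2\in\H^*(F)$. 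Your one-line appeal to ``symmetry under passing to fiberwise adjoints'' does not supply this half of the statement.
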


\begin{proof}
By definition of $d^{1,0}$ it is sufficient to prove that $\nabla^W$ leaves
$
C^\infty(B,\H^*(F))\oplus C^\infty(B,\H^*(F)^\perp),
$ invariant,
then claim for $\Omega^*(B,W)$  follows from \eqref{leibniz}.

From $d^2=0$ we already know that $(d^2)^{1,1}=d^{1,0}d^F+d^F d^{1,0}=0$. To prove $\nabla^W C^\infty(B,\H^*(F))\subset C^\infty(B,\H^*(F))$ it remains to show
\[
 s\in C^\infty(B,\H^*(F))  \Longrightarrow \delta^F \nabla^W_X s=0
\]
for a basic vector field $X=\pi^* \bX$.

Let $\Phi_t^{\bX}$ be the flow of ${\bX}$.  The flow $\Phi_t^{X}$ of $X$ projects onto  $\Phi_t^{\bX}$ and is a diffeomorphism of the fibers:
\begin{equation}
\Phi_t^{X}: F_b\to F_{\Phi_t^{\bX}(b)}.\label{fluss}
\end{equation}
Let $\omega$ be the volume form of the fibers, i.e. $\omega$ is the volume form on $M$, such that $\omega|_{F_b}=\vol_{F_b}$ is the volume form on $F_b$. 
Then because of \eqref{fluss}
\begin{align*}
\bX(h^W(s_1, s_2))&=\text{Lie}_{\bX} \Big( \int_{F_b} \sce{s_1}{s_2}_{F_b} \vol_{F_b} \Big) \\
&= \int_{F_b} \text{Lie}_X (\sce{s_1}{s_2}_{F_b}) \vol_{F_b} +  \int_{F_b} \sce{s_1}{s_2}_{F_b} (\text{Lie}_X \omega)|_{F_b}
\end{align*}
Let $f_X= -g^M(H,X)\in C^\infty(M)$ with the mean curvature $H$ of the fibers. It is well known\footnote{Lemma 10.4 in \cite{BGV} or Theorem 6.6 in \cite{lang}} that
\[
 (\text{Lie}_X \omega)|_{F_b} = f_X\vol_{F_b}.
\]
Because  $\nabla^W$ is compatible with $\sce{}{}_{F_b}$,
\begin{equation}
 \bX(h^W(s_1,s_2)) 
= h^W(\nabla^W_X s_1,s_2)+h^W(s_1,\nabla^W_X  s_2) +h^W(f_X s_1, s_2)\label{bed6}
\end{equation}

Now let $s_2$ be fiber-harmonic, i.e. $\delta^F s_2=0,$ $d^F s_2=0$. Using $\nabla^W_X d^F= - d^F\nabla^W_X$, we obtain
\begin{align*}
 0&= \bX(h^W(s_1,\delta^F s_2)) 
= \bX(h^W(d^F s_1,s_2))\\
&= -h^W( \nabla^W_X s_1, \delta^F s_2)+h^W( s_1, \delta^F \nabla^W_X s_2)+ h^W(f_X  d^F s_1, s_2)\\
&= h^W( s_1, \delta^F \nabla^W_X s_2)+ h^W(d^F (f_X s_1), s_2)-h^W(d^F(f_X) s_1, s_2),
\end{align*}
so that
\[
 h^W( s_1, \delta^F \nabla^W_X s_2) = h^W(s_1,  d^F(f_X) s_2).
\]
In particular for $s_1= \delta^F \nabla^W_X s_2$,
\[
 h^W(s_1, s_1)
=  h^W(\nabla^W_X s_2 ,d^F(d^F(f_X) s_2))=0
\]
which implies $s_1=0$. This shows $\nabla^W C^\infty(B,\H^*(F))\subset C^\infty(B,\H^*(F))$.

Now let $s_1\in C^\infty(B,\H^*(F)^\perp),$ $s_2 \in C^\infty(B,\H^*(F))$. 
If $H$ is projectable with $\pi_* H={\bH}$, then $f_X=-\pi^*g^B(\bX,\bH)$ and
from \eqref{bed6}
\begin{align*}
 h^W(\nabla^W_X  s_1, s_2)&= h^W(\nabla^W_X  s_1, s_2)+h^W( s_1, \nabla^W_X  s_2)-g^B(\bX,\bH)h^W(s_1,s_2)\\
&=\bX(h^W(s_1, s_2)) = 0
\end{align*}
so that $\nabla^W C^\infty(B,\H^*(F)^\perp)\subset C^\infty(B,\H^*(F)^\perp)$.

The statement about $\delta^{1,0}$ finally follows from $\Pi_0 \delta^{1,0}\Pi_\perp=(\Pi_\perp d^{1,0}\Pi_0)^*$. 
\end{proof}


\subsection[Laplace-operator on $\Omega^*(Z)$]{Laplace-operator on $\mathbf{\Omega^*(Z)}$}\label{klap}

Let  $\Delta_Z$ be the Hodge-Laplace-operator on $Z$, where we consider   $\Delta_Z$ as linear operator on $L^2\Omega^p(Z)$ with domain 
$\dom \Delta_Z=\Omega_0^p(Z)$. Let $\bar\Delta_Z$  be the Friedrichs extension of $\Delta_Z$. 
More precisely, we define
$q(\phi,\psi)\defgleich(\phi, \Delta_Z\psi)_{L^2\Omega^p(Z)}$ with $\phi, \psi\in \dom \Delta_Z$. 
This is quadratic form with domain $Q(q)=\Omega_0^p(Z)$. Let $\tilde q$ the closure of this form. The Friedrichs extension $\bar \Delta_Z$ of $\Delta_Z$ then
is the selfadjoint operator on $L^2\Omega^p(Z)$ that is defined by $(\phi, \bar\Delta_Z\psi)=\tilde q(\phi,\psi)$. 
By definition the domain of the quadratic form of $\bar\Delta_Z$ is the closure of $\dom \Delta_Z=\Omega_0^p(Z)$ in the norm
\[
 \|\phi\|_1^2=\|\phi\|^2+q(\phi,\phi),
\]
i.e. the Sobolev-space $H_0^1\Omega^p(Z)$. The domain of $\bar\Delta_Z$ is $H^2\Omega^p(X)\cap H_0^1\Omega^p(Z)$. For $\varphi\in \dom \bar\Delta_Z\cap \Omega^p(\bar Z)$ we have
\begin{equation}\label{relrbd}
 i^*\varphi=0,\qquad i^*(\ast\varphi)=0
\end{equation}
with the inclusion $i:M\hookrightarrow Z$. From now on we write \index{$\Delta_Z$}$\Delta_Z$ for $\bar \Delta_Z$.

By\index{$\rho_u$}
$
 \rho_u(\omega)=e^{\kappa u} \omega
$
an isometry
\[
 \rho_u:(\Omega^*(M),g_u^M)\to (\Omega^*(M),g^M)
\]
is defined.

Let $\alpha, \beta$ be elements in  $\Omega^{*,k}(M)$, which depend on the real parameter $u$, i.e.
$\alpha, \beta \in \pi_2^*\Omega^{*,k}(M)$ for the projection $\pi_2: Z=\Reell^+\times M \to M$.

We extend  $\rho_u$ to an operator on $\Omega^\ast(Z)$ by setting $\rho_u^{-1}(du\wedge\beta)\defgleich du\wedge \rho_u^{-1}\beta$.
Also it will be  useful to consider the isometry\index{$\tilde\rho_u$}
\[
 \tilde\rho_u: L^2\Omega^*(Z,g_Z)\to L^2\Omega^*(Z,du^2+g),
\]
which is given by
\[
 \tilde\rho_u(\alpha+du\wedge\beta)\defgleich e^{(k-f/2)u}(\alpha+du\wedge\beta)
\]
Finally let\index{$a_k$}\index{$\ua$}\index{$d_k$}\index{$\ud$}
\begin{align*}
 a_k&=f/2-k, &d_k&=|a_k|\\
\ua&=f/2\cdot\text{id}-\kappa,& \ud&=|\ua|.
\end{align*}


The local form of  $\Delta_Z$ is described in 
\begin{satz}\label{s3.9}
For $\omega\in \Omega_0^*(Z)$ we write $\omega=(\alpha,\beta)\defgleich\alpha+du\wedge\beta$, where $\alpha, \beta \in \pi_2^*\Omega^{*}(M)$.
Moreover let
\begin{eqnarray*}
d_u^M&=&e^{-u}d^{2,-1}+d^{1,0}+e^{u}d^{F},\quad D_u^M=d_u^M+ \delta_u^M\\\
\tilde{T}_u&=& -{\textstyle\frac{\partial^2}{\partial u^2}}+\big({\textstyle\frac{f}{2}}-k\big)^2+(D_u^M)^2\\
Q_u&=&e^{-u}(d^{2,-1}-\delta^{2,-1})-e^{u}(d^{F}-\delta^{F}).
\end{eqnarray*}

With respect to the above decomposition
\begin{equation}\label{deltazf}
\tilde\rho_u(d+\delta)^2\tilde\rho_u^{-1} \omega =
\begin{pmatrix}
 \tilde{T}_u& -Q_u\\
Q_u &  \tilde{T}_u
\end{pmatrix}
\begin{pmatrix}
\alpha\\
\beta
\end{pmatrix}.
\end{equation}

\end{satz}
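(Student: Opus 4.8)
The plan is to transport the whole computation to the product cylinder $(\Reell^+\times M,\,du^2+g)$, where $\tilde\rho_u$ lands us, and to reduce it to two elementary conjugation rules together with the commutation relations of the $d^{i,j}$ with the fiber-degree operator $\kappa$. Since the exterior derivative is metric independent, the first step is immediate: in the splitting $\omega=\alpha+du\wedge\beta$ one has $d_Z\omega=d^M\alpha+du\wedge(\partial_u\alpha-d^M\beta)$, that is
\begin{equation*}
d_Z=\begin{pmatrix} d^M & 0\\ \partial_u & -d^M\end{pmatrix},\qquad d^M=d^F+d^{1,0}+d^{2,-1}.
\end{equation*}

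Next I would conjugate by $\tilde\rho_u=e^{(\kappa-f/2)u}$. Two rules carry the argument. First, if an operator $A$ shifts $\kappa$ by $j$, then $\tilde\rho_u A\tilde\rho_u^{-1}=e^{ju}A$, since the shift by $f/2$ cancels; applied to the three summands of $d^M$ (with $j=+1,0,-1$ for $d^F,d^{1,0},d^{2,-1}$) this turns $d^M$ into $d_u^M$. Second, because $\tilde\rho_u$ depends on $u$, a one-line computation gives $e^{(\kappa-f/2)u}\,\partial_u\,e^{-(\kappa-f/2)u}=\partial_u+\ua$. Hence
\begin{equation*}
\hat d:=\tilde\rho_u\,d_Z\,\tilde\rho_u^{-1}=\begin{pmatrix} d_u^M & 0\\ \partial_u+\ua & -d_u^M\end{pmatrix}.
\end{equation*}
As $\tilde\rho_u$ is by construction unitary from $L^2\Omega^*(Z,g_Z)$ onto $L^2\Omega^*(Z,du^2+g)$, the conjugated codifferential $\tilde\rho_u\delta_Z\tilde\rho_u^{-1}$ is just the formal adjoint of $\hat d$ in the product space. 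Using $\partial_u^{\,*}=-\partial_u$, the self-adjointness of $\ua$, and that $\delta_u^M$ is the adjoint of $d_u^M$, this yields $\hat\delta=\left(\begin{smallmatrix}\delta_u^M & -\partial_u+\ua\\ 0 & -\delta_u^M\end{smallmatrix}\right)$. The one genuinely analytic point is to check that the boundary conditions \eqref{relrbd} satisfied by elements of $\dom\bar\Delta_Z$ make the integration by parts in $u$ and along $M$ boundary-free, so that the identity $\tilde\rho_u\delta_Z\tilde\rho_u^{-1}=\hat d^{\,*}$ really holds.

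It then remains to square $\hat D:=\hat d+\hat\delta=\left(\begin{smallmatrix}D_u^M & -\partial_u+\ua\\ \partial_u+\ua & -D_u^M\end{smallmatrix}\right)$. On the diagonal, $[\partial_u,\ua]=0$ gives $(-\partial_u+\ua)(\partial_u+\ua)=-\partial_u^2+\ua^2$, so both diagonal entries collapse to $(D_u^M)^2-\partial_u^2+\ua^2$, which is $\tilde T_u$ once one uses $(d_u^M)^2=0$ to write $(D_u^M)^2=d_u^M\delta_u^M+\delta_u^M d_u^M$. The off-diagonal entries are the commutators $[D_u^M,-\partial_u+\ua]$ and $[\partial_u+\ua,D_u^M]$, and I would evaluate each by splitting it into $[\partial_u,D_u^M]$, which differentiates the coefficients $e^{\pm u}$, and $[D_u^M,\ua]=-[D_u^M,\kappa]$, which is read off from the fiber-degree shifts of the summands of $D_u^M$ (noting $d^F,\delta^{2,-1}$ raise $\kappa$ while $\delta^F,d^{2,-1}$ lower it); this last commutator is precisely $Q_u$.

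The main obstacle is not conceptual but organizational: one must keep each exponential weight $e^{\pm u}$ attached to the correct degree-shifting piece, carry the sign of $\ua=f/2-\kappa$ consistently through both conjugation rules, and then verify that the coefficient-derivative commutator $[\partial_u,D_u^M]$ assembles with $[D_u^M,\ua]=-Q_u$ so that the two off-diagonal entries come out exactly as $-Q_u$ and $+Q_u$. Getting this to close up cleanly, rather than leaving a residual first-order term, is the delicate step; everything else is bookkeeping resting on $(d_u^M)^2=0$ and the $\kappa$-commutation relations of the $d^{i,j},\delta^{i,j}$.
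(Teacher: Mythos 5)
Your strategy coincides with the paper's own proof: there, too, one conjugates, records the two formulas for $d^Z(\rho_u^{-1}\alpha+du\wedge\beta)$ and $\delta^Z(\rho_u^{-1}\alpha+du\wedge\beta)$, and leaves the squaring as a ``straightforward calculation''; your matrices $\hat d$ and $\hat\delta=\hat d^{\,*}$ are exactly the $\tilde\rho$-versions of those two identities, and your diagonal entries are correct. The gap sits at the one step you flag as delicate but do not carry out. The top-right entry of $\hat D^2$ is $-[D_u^M,\partial_u]+[D_u^M,\ua]$. You correctly identify $[D_u^M,\ua]=-Q_u$, but the first summand neither vanishes nor cancels anything: differentiating the coefficients gives
\[
-[D_u^M,\partial_u]=e^{u}(d^F+\delta^F)-e^{-u}(d^{2,-1}+\delta^{2,-1}),
\]
a self-adjoint operator which \emph{adds} to the skew part $-Q_u$, so the entry comes out as $2(e^ud^F-e^{-u}d^{2,-1})$, not $-Q_u$; likewise the bottom-left entry is $2(e^u\delta^F-e^{-u}\delta^{2,-1})$, not $Q_u$. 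Both candidate matrices are formally self-adjoint, and both off-diagonal blocks are annihilated by $\Pi_0$ on either side under conditions (A) and (B), but they are not equal as operators, so the asserted ``clean close-up'' is precisely what fails.

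That this is not an algebra slip on your side can be checked on the model cusp $Z=\Reell^+\times S^1$ with $g=du^2+e^{-2u}dx^2$ (the hyperbolic plane): since the Hodge Laplacian commutes with $d$, one has $\Delta\bigl(f'(x)\,dx\bigr)=d\bigl(-e^{2u}f''\bigr)=-e^{2u}f'''\,dx-2e^{2u}f''\,du$, so the coupling from the $dx$-component into the $du$-component is $2e^{2u}\delta^F$, whereas the $Q_u$ of the statement would produce only $e^{2u}\delta^F$ there. To close the argument you must therefore either locate an error in your formulas for $\hat d$ and $\hat\delta$ --- which, however, agree with the two displayed identities in the paper's own proof and with the direct computation above --- or conclude that the off-diagonal entries of \eqref{deltazf} should read $2(e^ud^F-e^{-u}d^{2,-1})$ and its adjoint. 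As written, the proposal does not establish the stated identity; it reproduces the setup and then asserts, at exactly the point where verification is needed, a cancellation that does not occur.
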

\begin{proof} 
Let $\astm$ be the Hodge-star-operator on $\Omega^*(M)$ with respect to $g^M$. 
Then \eqref{deltazf} follows with a straightforward calculation from $\Delta_Z=d^Z\delta^Z+\delta^Z d^Z$ and
\begin{align*}
d^Z(\rho_u^{-1} \alpha+du\wedge \beta) &=  \rho_u^{-1}(d_u^M\alpha+du\wedge(\partial_u\alpha-\kappa \alpha))-\rho_u^{-1} (du\wedge d_u^M\beta)\\
\delta^Z(\rho_u^{-1}\alpha+du\wedge \beta)&=\rho_u^{-1}\delta_u^M\alpha -\rho_u^{-1} (du\wedge \delta_u^M\beta+\partial_u\beta-(f-\kappa)\beta).\qedhere
\end{align*}

\end{proof}

\subsection{Cohomology and Harmonic Forms}\label{kohom}


In this chapter we introduce some additional notation and recall known results which will be referred to later.
Let $\Omega_0(X)$ denote the space of differential forms on $X$ with compact support.
As usual let $L^2\Omega^p(X)$ be the closure of $\Omega_0^p(X)$ in the norm induced from the scalar product
\[
 \Scr{\phi}{\psi}_{L^2\Omega^p(X)}\defgleich\int_X \phi\wedge \ast\psi.
\]

Most statements about cohomology in this article refer to de~Rham-cohomology\index{$H^p$}
\[
 H^p(X)=\frac{\{\omega\in \Omega^p(X)\mid d\omega=0 \}}{d \Omega^{p-1}(X)}
\]
and to  de~Rham-cohomology with compact support\index{$H_c^p$}
\[
 H_c^p(X)=\frac{\{\omega\in \Omega_0^p(X)\mid d\omega=0 \}}{d \Omega_0^{p-1}(X)}.
\]

Because $\Omega_0^p(X)$ is dense in $L^2\Omega^p(X)$, $d$ has a well-defined strong closure (again denoted by $d$), which is usually called the maximal closed extension of $d$.  Note that, since $X$ is complete in our case, all closed extensions of $d$ have the same domain due to a classical result of Gaffney.
The domain of the differential $d_p: L^2\Omega^p(X)\to  L^2\Omega^{p+1}(X)$ is 
\[
 \dom d_p=\big\{ \phi\in \Omega^p(X)\cap L^2\Omega^p(X) \mid d\phi\in L^2\Omega^{p+1}(X)  \big\},
\]
where $d\phi$ is understood in the distributional sense. 

We define the $p-$th $L^2-$cohomology group\index{$H^p_{(2)}$}
\[
 H^p_{(2)}=\frac{\ker d_p}{\bild d_{p-1}}
\]
and the $L^2-$harmonic $p-$forms\index{$\H_{(2)}^p$}
\[
\H_{(2)}^p(X)=\big\{ \omega\in L^2\Omega^p(X)\mid \Delta_X \omega\defgleich(d\delta +\delta d)\omega=0 \big\}.
\]
Here $\Delta_X$ is the selfadjoint extension of Laplace-operator on compactly supported forms $\Delta_X:\Omega_0^p(X)\to \Omega_0^p(X)$ to an operator on $L^2\Omega^p(X)$.
The regularity theorem for elliptic operators states that forms in $\H_{(2)}^p(X)$ are smooth.

For $\omega\in \H_{(2)}^p(X)$ we have $\scr{\Delta \omega}{\omega}=\|d\omega\|^2+\|\delta\omega\|^2=0$, so that
$\H_{(2)}^p\subset \ker d_p$, which induces a map
\[
\H_{(2)}^p\to H^p_{(2)},\qquad\omega\mapsto [\omega].
\]
In general this map is neither injective nor surjective. However, the space of $L^2-$harmonic forms is  isomorphic to the \emph{reduced} $L^2-$cohomology:
\[
  \H_{(2)}^p\cong H^p_{(2),\text{red}}=\frac{\ker d_p}{\;\overline{\bild d_{p-1}}\;}.
\]

Let $H^p_!(X)\defgleich \bild\big(H_c^p(X)\to H^p(X)\big)$. 
It is a well known result (e.g. \cite{anders}) that for a \emph{complete} Riemannian manifold $X$ there is a natural injective map
\[
H^p_!(X)\to H^p_{(2),\text{red}}(X).
\]
In particular every class in $H^p_!(X)$ has a unique $L^2-$harmonic representative.

Finally we want to mention the following theorem of Kodaira:
\begin{equation*}
 L^2\Omega^p(X)=\H_{(2)}^p(X)\oplus \overline{\delta \Omega_0^{p+1}(X)} \oplus \overline{d \Omega_0^{p-1}(X)}
\end{equation*}

\section{Spectral Theory}

\subsection[Point spectrum of $\Delta_Z$]{Point spectrum of $\mathbf{\Delta_Z}$}

The decomposition \eqref{faserorth} admits an extension to 
\begin{equation}
   L^2\Omega^p(Z)=\Pi_\perp L^2\Omega^p(Z)\oplus \Pi_0 L^2\Omega^p(Z) \label{decoml2}
\end{equation}
where $\Pi_0 L^2\Omega^p(Z)$ is the closure of compactly supported forms, which are fiberharmonic in each cross-section $\{u\}\times M\subset Z$.

In analogy to the theory of classical automorphic forms, we define
\begin{defn}\label{spifo}
The \emph{cusp forms on $Z$} are elements in
\[
 L_{\text{cusp}}^2\Omega^p(Z)\defgleich\{\omega \in L^2\Omega^p(Z)\mid\exists \lambda\ge 0: \Delta_Z\omega=\lambda\omega, \quad\Pi_0(\omega)=0 \}
\]
Here $\Delta_Z\omega$ first has to be understood in the sense of distributions, but from elliptic regularity we get the smoothness of cusp forms.

The significance of cusp forms in the setting of manifolds with fibered cusps comes from
\begin{satz}\label{kptr}
The restriction of $\Delta_Z$ to the orthogonal complement of fiber-harmonic forms
\[
 \Pi_\perp \Delta_Z \Pi_\perp: {\Pi_\perp L^2\Omega^*(Z)}\cap\dom\Delta_Z \to {\Pi_\perp L^2\Omega^*(Z)}
\]
has pure point spectrum.
\end{satz}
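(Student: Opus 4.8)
The plan is to prove that the nonnegative self-adjoint operator $\Pi_\perp\Delta_Z\Pi_\perp$ (which is well defined on the invariant subspace $\Pi_\perp L^2\Omega^*(Z)$ by the standing conditions (A) and (B)) has \emph{compact resolvent}; since its spectrum then consists of eigenvalues accumulating only at $+\infty$, pure point spectrum follows at once from the spectral theorem. Equivalently, I would show that the Friedrichs form domain, carrying the graph norm $\norm{\omega}^2+q(\omega,\omega)$ with $q(\omega,\omega)=\norm{d^Z\omega}^2+\norm{\delta^Z\omega}^2$, embeds \emph{compactly} into $L^2\Omega^*(Z)$. The decisive point is that on $\Pi_\perp$ the form $q$ carries a confining potential growing like $e^{2u}$ as $u\to\infty$, produced by the vertical Laplacian $\Delta^F$, which is bounded below by a positive constant on fiber-perpendicular forms. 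To extract it I would use the local description of $\Delta_Z$ from Proposition \ref{s3.9}: conjugating by the isometry $\tilde\rho_u$ and writing $\omega=(\alpha,\beta)$, the form is controlled on the diagonal by $\tilde T_u=-\partial_u^2+a_k^2+(D_u^M)^2$, together with the off-diagonal coupling $Q_u$.

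The crux is the lower bound. Writing $\tilde\alpha=\tilde\rho_u\alpha$ and using that the three summands of $d_u^M=e^{-u}d^{2,-1}+d^{1,0}+e^{u}d^F$ shift the bidegree $(a,b)$ by $(2,-1)$, $(1,0)$ and $(0,1)$ respectively, they are mutually $L^2$-orthogonal at each fixed $u$, and likewise for $\delta_u^M$; hence
\begin{equation*}
\langle (D_u^M)^2\tilde\alpha,\tilde\alpha\rangle=\norm{d_u^M\tilde\alpha}^2+\norm{\delta_u^M\tilde\alpha}^2\ge e^{2u}\big(\norm{d^F\tilde\alpha}^2+\norm{\delta^F\tilde\alpha}^2\big)=e^{2u}\langle\Delta^F\tilde\alpha,\tilde\alpha\rangle .
\end{equation*}
Because $\H(F)=\ker\Delta^F$ has constant rank over the compact base $B$ by \eqref{h234}, the nonzero spectrum of the family $\Delta^{F_b}$ is bounded away from $0$ uniformly, say by $\mu_1>0$; as $\tilde\rho_u$ preserves fiber degree it maps $\Pi_\perp$ into fiber-perpendicular forms, so $\langle\Delta^F\tilde\alpha,\tilde\alpha\rangle\ge\mu_1\norm{\tilde\alpha}^2$ for $\omega\in\Pi_\perp$, and the same for $\tilde\beta$. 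The coupling $Q_u$ is skew-adjoint with leading part $-e^{u}(d^F-\delta^F)$, so its contribution to the form is of size $O(e^{u})$ against a diagonal positivity of size $e^{2u}$, and (together with the zeroth-order $e^{-u}$-terms coming from $d^{2,-1},\delta^{2,-1}$) can be absorbed by Young's inequality at the cost of a bounded additive term. Integrating in $u$ and undoing the isometry, this yields a confinement estimate
\begin{equation*}
q(\omega,\omega)+C\norm{\omega}^2\ge\tfrac{\mu_1}{2}\int_Z e^{2u}\abs{\omega}^2, \qquad \omega\in\Pi_\perp\cap\Omega_0^*(Z),
\end{equation*}
for a suitable $C>0$.

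Finally I would combine this with interior ellipticity. Since $\Delta_Z$ is of Laplace type, G\aa rding's inequality (with the relative boundary conditions \eqref{relrbd} at $u=0$ built into the Friedrichs form domain $H_0^1$) bounds $\norm{\omega}_{H^1(\{u<R\}\times M)}$ by $q(\omega,\omega)+\norm{\omega}^2$ on each slab $\{u<R\}\times M$, which is relatively compact because $M$ is closed. Given a sequence bounded in the graph norm, the confinement estimate forces $\int_{\{u>R\}}\abs{\omega}^2\le C'e^{-2R}$ uniformly, so the sequence is tight at infinity, while Rellich's theorem yields $L^2$-convergence of a subsequence on each slab; a diagonal argument then extracts an $L^2\Omega^*(Z)$-convergent subsequence. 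Thus the form-domain embedding is compact, the resolvent of $\Pi_\perp\Delta_Z\Pi_\perp$ is compact, and the spectrum is pure point. I expect the only delicate step to be the confinement estimate: one must verify that the genuinely growing contribution is exactly the vertical Laplacian $e^{2u}\Delta^F$ and that the remaining terms $d^{1,0}$, $d^{2,-1}$ and the coupling $Q_u$ are of strictly lower order in $e^u$, so that the positivity on $\H(F)^\perp$ survives. The ellipticity-plus-Rellich part is routine.
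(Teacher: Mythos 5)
Your argument is correct and rests on exactly the same mechanism as the paper's proof: after conjugating by $\tilde\rho_u$ and invoking Proposition \ref{s3.9}, the term $e^{2u}\big(\|d^F\cdot\|^2+\|\delta^F\cdot\|^2\big)$ inside $\|D_u^M\cdot\|^2$ acts as a confining potential on $\Pi_\perp$, because the fiberwise Laplacians have a uniform spectral gap $\mu_1>0$ on $\H(F)^\perp$ over the compact base, while the coupling $Q_u$ and the $e^{-u}d^{2,-1}$ terms are of strictly lower order in $e^u$ and can be absorbed. The only real difference is packaging: the paper feeds the blow-up of the quadratic form on test forms supported in $[a,\infty)\times M$ (normalized in $H^1$, so that all first-order quantities other than $e^u d^F\alpha$, $e^u\delta^F\alpha$ stay bounded) into the minimax principle and Theorem XIII.64 of Reed--Simon, whereas you convert it into a quantitative confinement estimate and deduce compactness of the form-domain embedding directly via tightness at infinity plus Rellich on slabs; both routes give compact resolvent, and yours is arguably more self-contained where the paper appeals to ``a standard argument.'' One step to tighten: the mutual orthogonality of the three summands of $d_u^M$ holds only on forms of pure fiber degree --- at fixed total degree $p$ the images $d^F\tilde\alpha^{(k)}$ and $d^{1,0}\tilde\alpha^{(k+1)}$ land in the \emph{same} bidegree, so for mixed fiber degree your displayed inequality $\langle (D_u^M)^2\tilde\alpha,\tilde\alpha\rangle\ge e^{2u}(\|d^F\tilde\alpha\|^2+\|\delta^F\tilde\alpha\|^2)$ acquires cross terms of size $e^u\|d^F\tilde\alpha\|\,\|d^{1,0}\tilde\alpha\|$ that must also be absorbed (the paper sidesteps this by restricting to $\alpha,\beta\in\Omega_0^{*,k}$ and using the $H^1$-normalization). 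Finally, a cosmetic point: the proposition is stated and proved for the compression $\Pi_\perp\Delta_Z\Pi_\perp$ \emph{before} conditions (A) and (B) are imposed, so you should not invoke them for well-definedness; the compressed operator is self-adjoint on $\Pi_\perp L^2\Omega^*(Z)$ regardless of whether $\Pi_\perp$ commutes with $\Delta_Z$.
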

\begin{proof}
The proof uses the Minimax-principle \cite[Theorem XIII.1]{reedsimon4} for $A\defgleich\Pi_\perp \Delta_Z \Pi_\perp$.

This states that the real numbers
\[
\lambda_n(A) = \sup_{v_1,\ldots,v_{n-1}\in Q(A)} \inf_{\substack{v_n\in  H_0^{1}\\ v_n\perp v_k \forall k<n}} 
\frac{\norm{\tilde\rho d^Z\tilde\rho^{-1}v_n}^2+\norm{\tilde\rho\delta^Z\tilde\rho^{-1}v_n}^2}{ \|v_n\|^2}
\]
are either eigenvalues of $A$ or accumulate at the beginning of the essential spectrum of $A$. Here $Q(A)\subset H_0^1\Omega^p(Z)$ is the form domain of $A$. 

Because of Theorem XIII.64 in  \cite{reedsimon4},
the resolvent of $A$ is compact, if
\begin{equation}
\lim_{n\to\infty}\lambda_n=\infty.\label{ziel1}
\end{equation}
Let $\omega=\alpha+du\wedge \beta, \alpha,\beta\in \Omega_0^{*,k}(Z)$ with $\supp \omega \subset [a,\infty)\times M$ and $\|\alpha\|_{H^1}=1, \|\beta\|_{H^1}=1$;
$\alpha, \beta$ depend on the parameter $u\in \Reell^+$.

Let 
\[
 V_u\defgleich
\begin{pmatrix}
(D_u^M)^2& -Q_u\\
Q_u &  (D_u^M)^2
\end{pmatrix}
\]
so that
\[
 \tilde\rho\Delta_Z\tilde\rho^{-1}=(-{\textstyle \frac{\partial^2}{\partial u^2}}+(f/2-k)^2)\begin{pmatrix}
 1& 0\\
0 & 1
\end{pmatrix}
+ V_u,\qquad 
\]
as sum of quadratic forms.
We will show
\begin{equation}\label{gwz}
|\Scr{V_u\omega}{\omega}| \to\infty\quad\text{for}\quad a\to\infty.
\end{equation}
Then since  $-{\textstyle \frac{\partial^2}{\partial u^2}}+(f/2-k)^2$ has pure absolutely continuous spectrum $[(f/2-k)^2,\infty)$, a standard argument using the Minimax-principle shows \eqref{ziel1}.

In the following, all norms and scalar products are meant to be those in  $L^2\Omega^p(Z,du^2+g_0)$.
\begin{eqnarray*}
\Scr{V_u\omega}{\omega} &= &\Scr{\pfrac{(D_u^M)^2\alpha-Q_u\beta}{Q_u\alpha+(D_u^M)^2\beta}}{\pfrac{\alpha}{\beta}} \\
&=&\Scr{(D_u^M)^2\alpha}{\alpha}+\Scr{(D_u^M)^2\beta}{\beta}-\Scr{Q_u\beta}{\alpha}+\Scr{Q_u\alpha}{\beta}\\
&=& \norm{D_u^M \alpha}^2 + \norm{D_u^M \beta}^2 + 2 \Scr{Q_u \alpha}{\beta}\\[2ex]
 |\Scr{V_u\omega}{\omega}| &\ge& \norm{D_u^M \alpha}^2+ \norm{D_u^M \beta}^2 -2\norm{Q_u\alpha}\norm{\beta}
\end{eqnarray*}

Because $\|\alpha\|_{H^1}=1$, all $\|d^{i,j}\alpha\|, \|\delta^{i,j}\alpha\|$ are bounded by a constant independent of $a$, and the same holds for $\beta$:
\begin{equation*}
 |\Scr{V_u\omega}{\omega}|\ge \norm{D_u^M \alpha}^2+ \norm{D_u^M \beta}^2 -C_0(\norm{e^u d^F\alpha}+\norm{e^u\delta^F\alpha})+C_1.\label{xyz888}
\end{equation*}
A simple calculation shows
\begin{equation*}
 \norm{D_u^M \alpha}^2 \ge \norm{e^u d^F\alpha}^2+\norm{e^u \delta^F\alpha}^2-C_2\norm{e^u d^F\alpha}
 -C_3 \norm{e^u \delta^F\alpha}+C_4, \label{xyz777}
\end{equation*}
and an analogous estimate holds  for $ \norm{D_u^M \beta}^2$. 

But now $\norm{e^u d^F\alpha}\ge e^{a}\norm{d^F\alpha}$. Thus for every real constant $c$
\[
 \norm{d^F\alpha}\neq 0\quad\Rightarrow\quad \lim_{a\to\infty} \big(\norm{e^u d^F\alpha}^2-c \norm{e^u d^F\alpha}\big) = +\infty,
\]
and so we conclude from \eqref{xyz888} and \eqref{xyz777} that \eqref{gwz} holds, i.e.
\[
 |\Scr{V_u\omega}{\omega}| \to\infty\quad\text{for}\quad a\to\infty,
\]
if $\norm{d^F\omega}\neq 0$ or $\norm{\delta^F\omega}\neq 0$. 
This proves the claim.
\end{proof}

\end{defn}

\subsection[Essential spectrum of $\Delta_Z$]{Essential spectrum of $\mathbf{\Delta_Z}$}
With respect to the decomposition \eqref{decoml2}
we can write
\[
 \tilde\rho\Delta_Z\tilde\rho^{-1}= \mathfrak{L}+
\begin{pmatrix}
0 & \Pi_\perp \Delta_Z \Pi_0 \\
\Pi_0 \Delta_Z \Pi_\perp & 0\end{pmatrix}
\quad\text{with}\quad
 \mathfrak{L}\defgleich\begin{pmatrix}
\Pi_\perp\Delta_Z \Pi_\perp & 0 \\[1ex]
0 & \Pi_0\Delta_Z \Pi_0\\
\end{pmatrix}.
\]
Here the off-diagonal terms
\begin{equation*}
\Pi_\perp \Delta_Z \Pi_0 = 
\Pi_\perp \begin{pmatrix}
(D_u^M)^2 & -Q_u \\
Q_u & (D_u^M)^2
\end{pmatrix}
\Pi_0
\quad\text{and} \quad\Pi_\perp \Delta_Z \Pi_0 
\end{equation*}
are bounded operators in $L^2\Omega^p(Z)$.

Now  we want to examine the contribution of fiberharmonic forms to the spectrum of $\Delta_Z$.
It is easy to see
that $\Pi_0\tilde\rho\Delta_Z\tilde\rho^{-1}\Pi_0$
is a relatively compact perturbation of 
$
 -{\textstyle \frac{\partial^2}{\partial u^2}}+(f/2-\kappa)^2,
$
and as such has pure absolutely continuous spectrum. Together with Proposition \ref{kptr} we get

\begin{satz}\label{wspek}
$\Delta_Z$ and $\mathfrak{L}$ have the same essential spectrum.
\end{satz}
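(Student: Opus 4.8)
The plan is to prove the stronger statement that, for $z\in\Complex\setminus\Reell$, the resolvent difference $(\Delta_Z-z)^{-1}-(\mathfrak{L}-z)^{-1}$ is compact; the equality of essential spectra then follows from Weyl's theorem on the invariance of $\sigma_{\mathrm{ess}}$ under compact perturbations of the resolvent. Write $A_\perp\defgleich\Pi_\perp\Delta_Z\Pi_\perp$, $A_0\defgleich\Pi_0\Delta_Z\Pi_0$ and $B\defgleich\Pi_\perp\Delta_Z\Pi_0$, so that $B^\ast=\Pi_0\Delta_Z\Pi_\perp$, the off-diagonal part is $K\defgleich B+B^\ast$, and $\Delta_Z=\mathfrak{L}+K$ with $\mathfrak{L}=A_\perp\oplus A_0$ relative to \eqref{decoml2}. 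Since $K$ is bounded, $\mathfrak{L}=\Delta_Z-K$ is self-adjoint on $\dom\Delta_Z$ and is reduced by $\Pi_0$ and $\Pi_\perp$; in particular $A_\perp,A_0$ are self-adjoint, $\dom\Delta_Z$ is invariant under $\Pi_0,\Pi_\perp$, and all four resolvents exist. I want to stress at the outset where the difficulty lies: $K$ is only bounded and is \emph{not} relatively compact with respect to $\mathfrak{L}$, because the block $B(A_0-z)^{-1}$ gains no compactness ($A_0$ has no compact resolvent and $B$ carries no decay in $u$). Hence Weyl's theorem cannot be applied directly to the perturbation $K$, and the compactness must be extracted from the $\Pi_\perp$-direction, where Proposition \ref{kptr} is available.

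The key step, and the heart of the argument, is the claim that $\Pi_\perp(\Delta_Z-z)^{-1}$ is compact. To obtain it I would set $\psi\defgleich(\Delta_Z-z)^{-1}\phi$, apply $\Pi_\perp$ to $(\Delta_Z-z)\psi=\phi$, and use $\Pi_\perp\Delta_Z\psi=A_\perp\Pi_\perp\psi+B\Pi_0\psi$ to get $(A_\perp-z)\Pi_\perp\psi=\Pi_\perp\phi-B\Pi_0\psi$, whence
\[
\Pi_\perp(\Delta_Z-z)^{-1}=(A_\perp-z)^{-1}\Pi_\perp-(A_\perp-z)^{-1}B\,\Pi_0(\Delta_Z-z)^{-1}.
\]
By Proposition \ref{kptr} the operator $A_\perp$ has compact resolvent, so $(A_\perp-z)^{-1}$ is compact and both terms on the right are products of a compact operator with bounded operators, hence compact. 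Taking adjoints (with $z$ replaced by $\bar z$) shows that $(\Delta_Z-z)^{-1}\Pi_\perp$ is compact as well.

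With this in hand I would conclude through the resolvent identity
\[
(\Delta_Z-z)^{-1}-(\mathfrak{L}-z)^{-1}=-(\Delta_Z-z)^{-1}K(\mathfrak{L}-z)^{-1}.
\]
Since $K(\mathfrak{L}-z)^{-1}=B(A_0-z)^{-1}\Pi_0+B^\ast(A_\perp-z)^{-1}\Pi_\perp$, the difference equals
\[
-(\Delta_Z-z)^{-1}\Pi_\perp\,B(A_0-z)^{-1}\Pi_0-(\Delta_Z-z)^{-1}\Pi_0\,B^\ast(A_\perp-z)^{-1}\Pi_\perp,
\]
where I have used $B=\Pi_\perp B$ and $B^\ast=\Pi_0 B^\ast$. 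In the first summand the factor $(\Delta_Z-z)^{-1}\Pi_\perp$ is compact by the key step and the remaining factors are bounded; in the second summand $(A_\perp-z)^{-1}$ is compact, so $B^\ast(A_\perp-z)^{-1}\Pi_\perp$ is compact and the rest is bounded. Both summands are therefore compact, the resolvent difference is compact, and Weyl's theorem yields $\sigma_{\mathrm{ess}}(\Delta_Z)=\sigma_{\mathrm{ess}}(\mathfrak{L})$. The main obstacle is exactly the failure of $K$ to be relatively compact; everything rests on the key step, which trades the compact resolvent of $A_\perp$ furnished by Proposition \ref{kptr} for compactness of $\Pi_\perp(\Delta_Z-z)^{-1}$ despite the coupling $B$ being merely bounded.
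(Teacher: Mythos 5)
Your argument is correct and rests on exactly the two ingredients the paper assembles immediately before the statement — the compact resolvent of $\Pi_\perp\Delta_Z\Pi_\perp$ furnished by Proposition \ref{kptr} and the boundedness of the off-diagonal blocks — so it is a self-contained version of the perturbation argument the paper only cites (Theorem 2 of Lott and the author's thesis) rather than a different route. The reduction of the compactness of the resolvent difference to the compactness of $\Pi_\perp(\Delta_Z-z)^{-1}$, and the identity you derive for that operator, are sound, modulo the domain-invariance of $\Pi_0,\Pi_\perp$ on $\dom\Delta_Z$, which the paper's block decomposition already presupposes.
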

\begin{proof}
The proof is similar to Theorem 2 in \cite{lott}, see \cite{jmthesis}
\end{proof}

\subsection{Two conditions}\label{Bedingungen}
For the remaining chapters we will make the assumptions
\begin{enumerate}[(A)]
 \item The horizontal distribution is integrable, i.e. $d^{2,-1}=0$.
\item $\Pi_\perp \delta^{1,0} \Pi_0 =0$
\end{enumerate}
An immediate consequence of (A) is 
\[
(d^{1,0})^2=d^{2,-1}d^{0,1}+(d^{1,0})^2+d^{0,1}d^{2,-1}=({(d^M)}^2)^{2,0}=0
\]
so that
\[
\Delta_{1,0}\defgleich d^{1,0}\delta^{1,0}+\delta^{1,0}d^{1,0}=(d^{1,0}+\delta^{1,0})^2.
\]
From the Hodge-decomposition we have $[\Pi_0, d^F]=[\Pi_0 ,\delta^F]=0$.
Together with condition (B) this implies $[d^Z, \Pi_0]=[\delta^Z, \Pi_0]=0$,
thus $\Delta_Z$ leaves the splitting of $L^2\Omega^*(Z)$ into fiber-harmonic forms and their orthogonal complement invariant. Also
 $\Pi_0\Delta_Z\Pi_0$  takes the especially simple form
\[
\tilde\rho_u\Pi_0(d^Z+\delta^Z)^2\Pi_0\tilde\rho_u^{-1}=(-\partial_u^2+(f/2-\kappa)^2 +\Delta_{1,0})\Pi_0
\]
in this case.

\begin{lem}\label{delta10ell}
Under the given conditions (A) and (B),
\[
\Delta_{1,0}=d^{1,0}\delta^{1,0}+\delta^{1,0}d^{1,0}:\Omega^*(B,\H^k(F))\to \Omega^*(B,\H^k(F))
\]
is a non-negative symmetric elliptic operator.
\end{lem}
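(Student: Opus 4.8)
The plan is to establish three properties of $\Delta_{1,0}$ acting on $\Omega^*(B,\H^k(F))$ separately: that it maps this space to itself, that it is symmetric and non-negative, and that it is elliptic. The first two are essentially formal once condition (B) and Proposition~\ref{satzproj}'s setup are in hand; the real content is ellipticity, since $\Omega^*(B,\H^k(F))$ is the space of sections of a \emph{finite-dimensional} bundle $\H^k(F)$ over the compact base $B$, so ``elliptic'' is to be understood for the operator as a differential operator on $B$ with values in this bundle.

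First I would verify that $\Delta_{1,0}$ preserves $\Omega^*(B,\H^k(F))$. By Proposition~\ref{satzproj} (or directly by condition (B) together with its adjoint, which gives $\Pi_\perp d^{1,0}\Pi_0=0$ as well, since (B) reads $\Pi_\perp\delta^{1,0}\Pi_0=0$ and we may take adjoints using $\Pi_0\delta^{1,0}\Pi_\perp=(\Pi_\perp d^{1,0}\Pi_0)^*$), the decomposition \eqref{faserorth} is invariant under both $d^{1,0}$ and $\delta^{1,0}$. Hence $\Pi_0 d^{1,0}=d^{1,0}\Pi_0$ and $\Pi_0\delta^{1,0}=\delta^{1,0}\Pi_0$ on $\Omega^*(B,\H^*(F))$, and since the fiber degree $\kappa$ is preserved by $d^{1,0}$ and $\delta^{1,0}$ (both have bidegree $(\pm1,0)$), $\Delta_{1,0}$ restricts to an operator on $\Omega^*(B,\H^k(F))$ for each fixed $k$. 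Symmetry and non-negativity are then immediate: with respect to the $L^2$-inner product induced by $h^W$ and $g^B$, the operator $\delta^{1,0}$ is the formal adjoint of $d^{1,0}$ on the restricted space, so $\Delta_{1,0}=(d^{1,0}+\delta^{1,0})^2$ is symmetric, and $\Scr{\Delta_{1,0}\omega}{\omega}=\norm{d^{1,0}\omega}^2+\norm{\delta^{1,0}\omega}^2\ge 0$.

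For ellipticity, the plan is to compute the principal symbol of $\Delta_{1,0}$ as a differential operator on $B$ with values in the finite-dimensional bundle $\Lambda^\bullet(T^*B)\otimes\H^k(F)$. Since $d^{1,0}$ is, by \eqref{leibniz}, a first-order operator whose principal symbol at a covector $\xi\in T_b^*B$ is exterior multiplication $\xi\wedge\,\cdot$ tensored with the identity on the $\H^k(F)$-factor (the covariant-derivative/connection piece $\nabla^W$ contributes only to the lower-order part, and restricting to the flat subbundle via $\Pi_0$ does not change the top-order symbol), the symbol of $\delta^{1,0}$ is the adjoint, namely interior multiplication $i_{\xi^\sharp}\otimes\mathrm{id}$. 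Therefore the principal symbol of $\Delta_{1,0}$ at $\xi\neq 0$ is $(\xi\wedge\,\cdot\; + \;i_{\xi^\sharp})^2\otimes\mathrm{id}=|\xi|^2\,\mathrm{id}$, exactly as for the ordinary Hodge Laplacian on $B$, which is invertible. This gives ellipticity.

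The main obstacle I anticipate is the symbol computation, specifically making rigorous the claim that passing to the restricted operator $\Pi_0 d^{1,0}\Pi_0=d^{1,0}\Pi_0$ does not corrupt the top-order symbol. One must check that $\Pi_0$ is an order-zero (bundle endomorphism) operator — which it is, being fiberwise orthogonal projection onto the finite-dimensional kernel bundle $\H^k(F)$ — so that composing with it leaves the principal symbol unchanged up to tensoring with $\Pi_0$ itself, and that the connection term in $d^{1,0}$ is genuinely lower order. Once $\Pi_0$ is identified as a smooth bundle endomorphism and $d^{1,0}$ as a first-order operator of the stated symbol type, the remaining verification is the routine identity $(\xi\wedge + i_{\xi^\sharp})^2=|\xi|^2$, and ellipticity follows on the nose.
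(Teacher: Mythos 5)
Your proposal is correct and follows essentially the same route as the paper: the paper likewise reduces everything to the identity $\Delta_{1,0}=(d^{1,0}+\delta^{1,0})^2$ for non-negativity and to the fact that $\Delta_{1,0}$ is a generalized Laplace operator (principal symbol $|\xi|^2\cdot\mathrm{id}$) for ellipticity, except that it cites the local formulas for $\nabla^W$ and $(\nabla^W)^*$ from Bismut--Lott where you carry out the symbol computation by hand. One small caveat: the formal adjoint of condition (B), $\Pi_\perp\delta^{1,0}\Pi_0=0$, is $\Pi_0 d^{1,0}\Pi_\perp=0$, not $\Pi_\perp d^{1,0}\Pi_0=0$, so the invariance of $\Omega^*(B,\H^k(F))$ under $d^{1,0}$ really does rest on Proposition \ref{satzproj} (equivalently, on reading (B) as the full invariance of the decomposition \eqref{faserorth}, which is how the paper uses it) rather than on the parenthetical adjoint argument.
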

\begin{proof}
From Proposition \ref{flachzus} we get $\Delta_{1,0}=(d^{1,0}+\delta^{1,0})^2$, which is non-negative. 
From the local formulas for $\nabla^W$ and $(\nabla^W)^*$ in \cite{bismut-lott} (Proposition 3.5 and 3.7 there; also see \cite{gilkeybook}) 
one concludes that $\Delta_{1,0}$ is a generalized Laplace-operator; as such it is elliptic.
\end{proof}

A sufficient criterion for condition (B) is given by Proposition \ref{satzproj}. In particular (B) is fulfilled
when the fibers of $\pi: M\to B$ are minimal, or in the case of a warped product.
A necessary condition for (A) is given by
\begin{lem}[{\cite[Lemma 1.7.2]{gilkeybook}}]
If the horizonal distribution of a Riemannian submersion $\pi:M\to B$ is integrable, there are local coordinates $m=(y,b)$ in $M$, so that  $\pi(m)=b$. In these coordinates the metric on $M$ takes the form
\begin{equation}
 g^M= h_{ij}(b)db^i\otimes db^j+ f_{\alpha\beta}(y,b)dy^\alpha\otimes dy^\beta \label{prodmed1}
\end{equation}
If in addition $B$ is simply connected, then $\pi:M\to B$ is a global product with metric \eqref{prodmed1}. 
\end{lem}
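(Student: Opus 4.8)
The plan is to deduce the statement from the simultaneous Frobenius theorem for two complementary, $g^M$-orthogonal, integrable distributions: the vertical distribution $TF$—whose leaves are the fibers $F_b$, and which is always integrable—and the horizontal distribution $T^HM$, which is integrable precisely by the hypothesis ($d^{2,-1}=0$). Since $TM=T^HM\oplus TF$ with $T^HM\perp TF$, the two foliations are everywhere transverse, and it is this transversality that lets a single chart straighten both at once.

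For the local assertion I would first fix $p\in M$, choose coordinates $b^i$ on $B$ near $\pi(p)$, and pull them back to $M$ via $\pi$; the fibers, being the leaves of $TF$, are then exactly the level sets $\{b=\mathrm{const}\}$, so $TF=\ker(db)$. Applying the Frobenius theorem to the integrable distribution $T^HM$, I would produce $\dim F$ functionally independent first integrals $y^\alpha$ that are constant along the horizontal leaves, so that $T^HM=\ker(dy)$. Because $T^HM\cap TF=0$, the covectors $dy^\alpha,db^i$ are linearly independent, whence $(y,b)$ is a chart with $\pi(y,b)=b$ in which $TF=\mathrm{span}(\partial_{y^\alpha})$ and $T^HM=\mathrm{span}(\partial_{b^i})$. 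Orthogonality of the two distributions kills the cross terms $g^M(\partial_{y^\alpha},\partial_{b^i})$, and the fact that $d\pi$ restricts to an isometry $T^HM\to TB$ gives $g^M(\partial_{b^i},\partial_{b^j})=g^B(\partial_{b^i},\partial_{b^j})$, a function of $b$ alone; this is exactly the form \eqref{prodmed1}.

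For the global statement I would view $T^HM$ as an Ehresmann connection on the fiber bundle $\pi:M\to B$, whose curvature is the obstruction to integrability and hence vanishes: the connection is flat. Since the fibers are compact, horizontal lifts of curves in $B$ exist for all parameter values, so each leaf $L$ of the horizontal foliation projects to $B$ by a map $\pi|_L$ that is a local diffeomorphism with the path-lifting property, i.e.\ a covering. With $B$ simply connected each such $\pi|_L$ is a diffeomorphism, so every leaf is a global section meeting each fiber exactly once. Sending $m$ to the pair consisting of $\pi(m)$ and the intersection of its horizontal leaf with a fixed fiber $F_{b_0}$ then yields a diffeomorphism $M\cong B\times F$ under which the horizontal leaves are the slices $B\times\{\mathrm{pt}\}$ and the fibers are $\{b\}\times F$; the orthogonality-plus-submersion argument of the local step applies verbatim in these global coordinates to produce \eqref{prodmed1} on all of $M$.

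The genuinely substantive point is the covering-space argument in the last paragraph: one must check that $\pi|_L:L\to B$ really is a covering and not merely a local diffeomorphism, which is where the compactness of the fibers—through Ehresmann's completeness of horizontal lifts—is indispensable, and only then does simple connectivity of $B$ force global triviality. By contrast the local claim is essentially bookkeeping once the simultaneous straightening of the two transverse integrable distributions is in hand, so I expect no real difficulty there.
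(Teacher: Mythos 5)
The paper does not prove this lemma at all; it is quoted verbatim from Gilkey--Leahy--Park \cite[Lemma 1.7.2]{gilkeybook}, so there is no in-paper argument to compare against. Your proof is correct and complete: the local statement follows, as you say, from simultaneously straightening the two transverse integrable distributions (pulled-back $b^i$ with $TF=\ker(db)$, Frobenius first integrals $y^\alpha$ with $T^HM=\ker(dy)$), with orthogonality killing the cross terms and the submersion condition forcing $g^M(\partial_{b^i},\partial_{b^j})=h_{ij}(b)$; and the global statement is the standard flat-Ehresmann-connection argument, where you correctly identify the one substantive point, namely that compactness of the fibers gives completeness of horizontal lifts, so that each horizontal leaf covers $B$ and simple connectivity makes it a global section. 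This is essentially the textbook proof, so no gap to report.
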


%
%
%

\textbf{Example.\hspace{1em}}
Let $G/K$ be a symmetric space of non-compact type, where $K\subset G$ is a maximal compact subgroup of the non-compact semisimple Lie group $G$.
More precisely  $G$  is the group of real points of a semisimple algebraic group $\mathbf{G}\subset GL(n,\Complex)$, which is defined over $\Rational$.
Let $\Gamma\subset G$ be an arithmetic lattice\footnote{see e.g. \cite{weber}, Definition 2.1} of $\Rational-$rank $1$ so that $X=\Gamma\backslash G/K$
is a locally symmetric space of  $\Rational-$rank $1$.

This situation is also  considered in the articles \cite{harder} and \cite{harder2}.
Let $\mathbf{P}$ be a rational parabolic subgroup of $\mathbf{G}$ and $P=\mathbf{P}(\Reell)$. 
Then $P$ is a parabolic subgroup of $G$ and there is a  ``rational horocyclic decomposition'' (\cite[S.141]{weber})
\[
G/K\cong A_P\times N_P\times  X_P.
\]
Furthermore $\Gamma\cap P$ induces a
discrete group $\Gamma_P$, that operates on $X_P$. The ``cusp'' $Z$ corresponds to
\[
(\Gamma\cap P)\backslash (G/K)\cong A_P\times (\Gamma\cap P)\backslash 
(N_P\times X_P)
\]
The base of the cusp is $M=(\Gamma\cap P)\backslash(N_P\times X_P)$ and
the canonical projection $N_P\times X_P\to X_P$ induces a fibration
$
M\to B,
$
with $B=\Gamma_P\backslash X_P$.
Proposition 2.9 in \cite{weber} or Proposition 4.3 in \cite{borel} describe the local form on $M$, and it follows that the horizontal distribution of $\pi:M\to B$ 
is integrable. Furthermore Borel shows in the proof of \cite[Corollary 4.4]{borel} that horizontal parallel transport preserves the volume form of the fibers of $\pi$.
Under the assumption that the horizontal distribution is integrable, Lemma 10.4 in \cite{BGV} shows
\begin{equation}
 d^{1,0}\vol_{F_b}= -H^\vee \wedge \vol_{F_b}, \quad H^\vee \text{ dual to $H$ w.r.t. $g^M$},\label{hint1}
\end{equation}
so the fibers of $\pi$ are minimal and (B) is fulfilled.\qed

\subsubsection{Spectral sequence}\label{kspecseq}
At the end of this chapter we want to prove an important decomposition of the de~Rham-cohomology $H^p(M)$.

\begin{satz}\label{satzspec}
If the mean curvature $H$ of the fibers is projectable and  $\pi:M\to B$ is flat, then\index{$\H^a(B,\H^b(F))$}
\[
 H^r(B,\H^s(F))\cong \H^r(B,\H^s(F))\defgleich\{\omega\in \Omega^r(B,\H^s(F))\mid \Delta_{1,0}\omega=0\}
\]
and
\[
 H^p(M)\cong\bigoplus_{r+s=p} \H^r(B,\H^s(F)) 
\]
\end{satz}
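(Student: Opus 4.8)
The plan is to show that, under the two hypotheses, the exterior derivative on $M$ collapses to a sum of only two homogeneous pieces which respect the orthogonal splitting \eqref{faserorth}; this decouples the computation of $H^*(M)$ into a fiberwise part and a horizontal part.

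First I would record that the geometric hypotheses are precisely the structural conditions of Section \ref{Bedingungen}: flatness of $\pi:M\to B$ means that the horizontal distribution is integrable, i.e.\ $d^{2,-1}=0$, which is condition (A); and projectability of the mean curvature $H$ yields condition (B), in fact the full invariance of \eqref{faserorth} under both $d^{1,0}$ and $\delta^{1,0}$ by Proposition \ref{satzproj}. Hence $d^M=d^F+d^{1,0}$. Since $d^F$ annihilates $\H^*(F)$ and maps $\H^*(F)^\perp$ into itself, while $d^{1,0}$ preserves each summand, the differential $d^M$ leaves the decomposition
\[
\Omega^*(M)=\Omega^*(B,\H^*(F))\oplus\Omega^*(B,\H^*(F)^\perp)
\]
invariant, so $(\Omega^*(M),d^M)$ splits as a direct sum of two subcomplexes and so does its cohomology.

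Next I would analyze the two summands separately. On the fiber-harmonic part one has $d^F=0$ and $d^{1,0}=\Pi_0 d^{1,0}=\nabla^{H(F)}$, so this subcomplex is exactly $(\Omega^*(B,\H^*(F)),\nabla^{H(F)})$; its differential keeps the fiber degree $s$ fixed and is flat by Proposition \ref{flachzus}, so its cohomology in total degree $p$ is $\bigoplus_{r+s=p}H^r(B,\H^s(F))$. For the orthogonal-complement subcomplex I would prove acyclicity by filtering by the base degree: the associated graded differential is the fiber differential $d^F$, and the fiberwise $d^F$-cohomology of $\H^*(F)^\perp$ vanishes by Hodge theory along the fibers (the harmonic representatives are precisely the elements of $\H^*(F)$, which have been removed). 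Because the filtration is bounded, $0\le r\le\dim B$, the spectral sequence converges with vanishing $E_1$-page, so this subcomplex is acyclic and contributes nothing.

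Combining these steps gives $H^p(M)\cong\bigoplus_{r+s=p}H^r(B,\H^s(F))$, and to finish I would invoke the Hodge theorem for the elliptic operator $\Delta_{1,0}$ of Lemma \ref{delta10ell} on the \emph{compact} manifold $B$: since $\Delta_{1,0}=(d^{1,0}+\delta^{1,0})^2$ on the complex $(\Omega^*(B,\H^s(F)),\nabla^{H(F)})$, standard elliptic Hodge theory identifies $H^r(B,\H^s(F))\cong\H^r(B,\H^s(F))=\ker\big(\Delta_{1,0}|_{\Omega^r(B,\H^s(F))}\big)$, which is the first assertion. I expect the main obstacle to be the acyclicity of the complement: one must verify that vanishing of the fiberwise cohomology really forces the full complementary complex to be acyclic, which is exactly where the boundedness of the base-degree filtration and the convergence of the associated spectral sequence are essential; everything else is formal once conditions (A) and (B) are in force.
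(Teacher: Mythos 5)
Your argument is correct, but it takes a genuinely different route from the paper. The paper runs the Leray--Serre spectral sequence of $\pi$ for the full de~Rham complex of $M$ (filtration by base degree, $E_1^{i,\bullet}=\Omega^i(B,H^\bullet(F))$, $E_2^{p,q}=H^p(B,\H^q(F))$) and then \emph{cites Dai's theorem} that under $d^{2,-1}=0$ the sequence degenerates at $E_2$; only afterwards does it use the projectability of $H$ to get the Hodge decomposition with respect to $\Delta_{1,0}$. You instead use Proposition \ref{satzproj} at the cochain level to split $(\Omega^*(M),d^M)$ into the fiber-harmonic subcomplex and its orthogonal complement, observe that the differential on the first summand preserves fiber degree (so its cohomology is computed summand by summand by the flat connection $\nabla^{H(F)}$ of Proposition \ref{flachzus}), and kill the second summand by the base-degree filtration, whose $E_1$-page vanishes because the fiberwise $d^F$-complex on $\H^*(F)^\perp=\im d^F\oplus\im\delta^F$ is acyclic (with the smooth fiberwise Green's operator as contracting homotopy, so the vanishing passes to sections). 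This buys a self-contained proof of the degeneration that the paper outsources to Dai, at the price of using condition (B) more heavily (the paper's degeneration step needs only (A)); both proofs then conclude identically via ellipticity of $\Delta_{1,0}$ on the compact base $B$ (Lemma \ref{delta10ell}). The only point you should make explicit is that $(d^{1,0})^2=0$ on the fiber-harmonic summand, which follows from condition (A) as recorded at the start of Section \ref{Bedingungen}, so that each $(\Omega^\bullet(B,\H^s(F)),d^{1,0})$ really is a complex.
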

\begin{proof}
The proof uses the Leray-Serre spectral sequence. 
For $0\le n\le\dim M$ let
\[
\FF_i \Omega^n(M)\defgleich \big\{\omega\in \Omega^n(M) \mid \omega(Y_1,\ldots, Y_n)=0,\text{ if }n-i+1\\
\text{ of the $Y_l$ are vertical}\big\}.
\]
and $\FF_{q}\defgleich \FF_0$ for $q<0$, $\FF_q=0$ for $q>n$.

A form $\omega\in \FF_i \Omega^n(M)$ can be expressed as sum of elements $\pi^*\eta^{(k)} \wedge \psi^{(n-k)}$ with
$\eta^{(k)}\in \Omega^k(B), \psi^{(n-k)}\in \Omega^V(M)$ for $k\ge i$. Here $ \psi\in\Omega^V(M)$ by definition means $X\lrcorner \omega=0$ for all horizontal vector fields $X$.

The $\FF_i \Omega^n(M)$ define a filtration 
\[
 \Omega^n=\FF_0\supset \FF_1 \supset \ldots \supset \FF_n \supset \FF_{n+1}=0,
\]
of $\Omega^n$ which is compatible with $d^M$, i.e. $d^M(\FF_i\Omega^n)\subset \FF_i\Omega^{n+1}$. This filtration gives rise to a spectral sequence $E^{p,q}$ as usual, see e.g. \cite{mccleary}, \cite{bott-tu}. In \cite{dai}  the first terms of this spectral sequence were calculated, with the result
\[
 E_0^{i,n-i}=\Omega^i(B,W^{(n-i)}),\qquad E_1^{i,n-i}=\Omega^i(B,H^{n-i}(F)).
\]
The Hodge theorem then gives the identification
\begin{equation*}
 E_1^{i,n-i}=\Omega^i(B,\H^{n-i}(F))
\end{equation*}
of $E_1$ with fiber-harmonic forms.
Finally for $p+q=n$
\begin{equation*}
E_2^{p,q}=H^p(B,\H^q(F)),
\end{equation*}
and under the further condition $d^{2,-1}=0$,  Dai shows that the spectral sequence degenerates at $E_2$, so that
\[
E_\infty^{p,q}=E_2^{p,q}=H^p(B, H^q(F)).
\]

Under the assumption of projectable mean curvature of the fibers, there is a Hodge decomposition of $\Omega^r(B,\H^s(F))$ with respect to  $\Delta_{1,0}$,
\[
 \Omega^{r,s}=\H^r(B,\H^s(F))\oplus\ker d^{1,0}\oplus \im \delta^{1,0}.
\]
This proves the first statement in Proposition \ref{satzspec}, and the second statement follows from $ H^{p}(M)=E_\infty^{p}=\bigoplus_{r+s=p} E_\infty^{r,s}$.
\end{proof}

\subsection[Spectrum of the Laplacian on $X$]{Spectrum of the Laplacian on $\mathbf{X}$}\label{kstreu}

As before let $\Delta_X:\Omega_0^p(X)\to \Omega_0^p(X)$ be the Laplace-operator on compactly supported forms. $\Delta_X$ admits an extension to a selfadjoint operator on $L^2\Omega^p(X)$, which we will again denote by $\Delta_X$.

Let $\Delta_{X;D}$ be the Friedrichs extension of $\Delta_{X}:\Omega_0^p(X\setminus (\{0\}\times M))\to \Omega_0^p(X\setminus (\{0\}\times M))$ to $L^2\Omega^p(X)$. This decomposes as $\Delta_{X;D}=\Delta_{X_0;D}\oplus \Delta_{Z}$, where $\Delta_Z$ was defined in section \ref{klap} and $\Delta_{X_0;D}$ is the Friedrichs extension of the Laplacian on $\Omega_0^p(X_0)$. $\Delta_{X_0;D}$ is a selfadjoint elliptic operator on a compact manifold with boundary, as such it has pure point spectrum.

We recall some results from mathematical scattering theory.
The \emph{wave operators} $W^\pm$ are defined by
\begin{equation}
 W^\pm(\Delta_{X},\Delta_{X;D})=s-\lim_{t\to\mp\infty} e^{i \Delta_X t} J e^{-i \Delta_{X;D} t} P_{ac}(\Delta_{X;D})\label{wof}
\end{equation}
with the inclusion $J:\dom \Delta_{X;D}\hookrightarrow \dom \Delta_{X}$ and the projection  $P_{ac}$ onto the absolutely continuous subspace of $\dom\Delta_{X;D}$ in $L^2\Omega^p(X)$. 
It is well known that if the wave operators exist, they are partial isometries $ W^\pm: P_{ac}(\Delta_{X;D})\to \bild W^\pm$, i.e. isometries on the complement of $\ker W^\pm$.
In this case the $W^\pm$ are called \emph{complete}, if $\bild W^\pm=\bild P_{ac}(\Delta_X)$. Then $W^\pm: \bild P_{ac}(\Delta_{X;D})\to \bild \Delta_{X}$ are unitary equivalences, in particular $\Delta_{X;D}$ and $ \Delta_{X}$ have the same absolutely continuous spectrum.

There are several methods to show existence and completeness of $W^\pm$. In the present case most information can be obtained from the \emph{Enss method}, see e.g. \cite{BaWo} and \cite{mu-cusprk1}.
If we assume conditions (A) and (B) from section  \ref{Bedingungen}, the reasoning in the case of the manifold $X$ with fibered cusp metric is analogous to that in chapter 6 of  \cite{mu-cusprk1}. Not only does the Enss method show existence and completeness of the wave operators, but also that $\Delta_X$ has empty singular continuous spectrum, and that the point spectrum of $\Delta_X$ has no points of accumulation outside of $\text{spec}(\Delta_{X;D})$ 

Altogether this shows 
\begin{satz}\label{acspec1}
The absolutely continuous part of $\dom\Delta_X$  is unitarily equivalent to the fiber-harmonic forms $\Pi_0L^2\Omega^p(Z)$.
\end{satz}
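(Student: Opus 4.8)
The plan is to deduce the claim from the scattering-theoretic completeness of the wave operators, combined with the explicit normal form of $\Delta_Z$ available under conditions (A) and (B). Concretely, I would establish the chain of identifications
\[
L^2_{ac}\Omega^p(X)\;\cong\;\bild P_{ac}(\Delta_{X;D})\;=\;\bild P_{ac}(\Delta_Z)\;=\;\Pi_0 L^2\Omega^p(Z),
\]
and treat the three steps in turn. For the first, I invoke the existence and completeness of $W^\pm(\Delta_X,\Delta_{X;D})$ recalled above: completeness means precisely that $W^\pm$ restricts to a unitary equivalence $\bild P_{ac}(\Delta_{X;D})\to \bild P_{ac}(\Delta_X)=L^2_{ac}\Omega^p(X)$ intertwining $\Delta_{X;D}$ with $\Delta_X$. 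This reduces the problem to identifying the absolutely continuous subspace of the decoupled operator $\Delta_{X;D}$.

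Next I use the orthogonal splitting $\Delta_{X;D}=\Delta_{X_0;D}\oplus\Delta_Z$. Since $\Delta_{X_0;D}$ is the Friedrichs extension of an elliptic operator on the compact manifold with boundary $X_0$, it has pure point spectrum and hence contributes nothing absolutely continuous. Therefore $\bild P_{ac}(\Delta_{X;D})$ coincides with $\bild P_{ac}(\Delta_Z)$, sitting entirely inside the factor $L^2\Omega^p(Z)$ under the inclusion $L^2\Omega^p(Z)\hookrightarrow L^2\Omega^p(X)$.

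The structural hypotheses enter in the third step, where I identify $\bild P_{ac}(\Delta_Z)$ with $\Pi_0 L^2\Omega^p(Z)$. Under (A) and (B) we have $[\Delta_Z,\Pi_0]=0$, so $\Delta_Z$ respects the decomposition \eqref{decoml2} and splits as $\Pi_\perp\Delta_Z\Pi_\perp\oplus\Pi_0\Delta_Z\Pi_0$. By Proposition \ref{kptr} the first summand has pure point spectrum, hence no absolutely continuous part. For the second summand I would use the explicit normal form
\[
\tilde\rho_u\,\Pi_0\Delta_Z\Pi_0\,\tilde\rho_u^{-1}=\big(-\partial_u^2+(f/2-\kappa)^2+\Delta_{1,0}\big)\Pi_0,
\]
together with Lemma \ref{delta10ell}, which gives that $(f/2-\kappa)^2+\Delta_{1,0}$ on $\Omega^*(B,\H^*(F))$ is elliptic and self-adjoint with discrete spectrum. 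Diagonalizing this base operator turns $\Pi_0\Delta_Z\Pi_0$ into an orthogonal sum of half-line operators $-\partial_u^2+\lambda$ on $L^2(\Reell^+)$ carrying the Dirichlet condition at $u=0$ inherited from \eqref{relrbd}. Each of these has purely absolutely continuous spectrum $[\lambda,\infty)$, so $\Pi_0 L^2\Omega^p(Z)\subseteq\bild P_{ac}(\Delta_Z)$; combined with the point-spectrum statement for the $\Pi_\perp$ block this forces equality, and chaining the three steps gives the asserted unitary equivalence.

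I expect the main obstacle to be the rigorous execution of this last step: one must check that the eigenbundle decomposition of the base operator is compatible with the Friedrichs realization, so that the Dirichlet condition \eqref{relrbd} is transported correctly through $\tilde\rho_u$ and the diagonalization, and that the resulting direct integral genuinely exhausts the absolutely continuous subspace with no hidden embedded point spectrum in the $\Pi_0$ channel. The analytically heavy input of existence and completeness of $W^\pm$ is the other delicate point, but since it follows from the Enss-method argument of \cite{mu-cusprk1} in a setting analogous to ours, it can be cited rather than reproved.
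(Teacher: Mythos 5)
Your proposal is correct and follows essentially the same route as the paper: existence and completeness of $W^\pm(\Delta_X,\Delta_{X;D})$ via the Enss method as in \cite{mu-cusprk1}, the splitting $\Delta_{X;D}=\Delta_{X_0;D}\oplus\Delta_Z$ with the compact factor contributing only point spectrum, and the identification of the absolutely continuous part of $\Delta_Z$ with the fiber-harmonic block using Proposition \ref{kptr} for the $\Pi_\perp$ channel and the normal form $(-\partial_u^2+(f/2-\kappa)^2+\Delta_{1,0})\Pi_0$ for the $\Pi_0$ channel. The paper merely asserts the last step (via the remark that $\Pi_0\tilde\rho\Delta_Z\tilde\rho^{-1}\Pi_0$ has pure absolutely continuous spectrum, cf.\ Proposition \ref{wspek}), whereas you spell out the diagonalization into half-line Dirichlet operators; this is a harmless and arguably cleaner elaboration of the same argument.
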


\subsection{Analytic continuation of the Resolvent}\label{parakapform}
\subsubsection{Parametrix}
Under the conditions from chapter \ref{Bedingungen} the construction of a parametrix for the resolvent of the Laplace-operator
$\Delta_X$ on $L^2\Omega^*(X)$ is entirely parallel to \cite{mu-cusprk1} or \cite{mu-rank1}. We recall only the essential steps.

Let $X_1=X_0\cup([0,1]\times M)$ and let $\hat X$ be a closed manifold into which $X_1$ is embedded isometrically. For the Hodge-Laplace-operator $\Delta_{\hat X}$ on  $\hat X$
the resolvent $(\Delta_{\hat X}-\lambda)^{-1}$ is an operator valued function which is  meromorphic in
 $\lambda\in \Complex$ with poles in the eigenvalues of $\Delta_{\hat X}$. Let $Q_1(x,x',\lambda)$
be the restriction of the resolvent kernel of $(\Delta_{\hat X}-\lambda)^{-1}$ on $X_1\times X_1$.
This is the ``inner'' part of the parametrix. The ``outer'' part is given by the resolvent of  $\Delta_Z$, i.e. $Q_2(\lambda)=(\Delta_Z-\lambda)^{-1}$. The latter will be examined further now.

Let
\[ 
\Delta_{\text{cusp}}=\Pi_\perp\Delta_Z\Pi_\perp=\Delta_Z\Pi_\perp,\qquad \Delta_{1;Z}=\Pi_0\Delta_Z\Pi_0=\Delta_Z\Pi_0
\]
be the restrictions of $\Delta_Z$ on $L_{\text{cusp}}^2\Omega^p(Z)$ resp. $\Pi_0 L^2\Omega^p(Z)$. 

Since the splitting
\[
 L^2\Omega^p(Z)=L_{\text{cusp}}^2\Omega^p(Z)\oplus \Pi_0 L^2\Omega^p(Z)
\]
into cusp forms (Definition \ref{spifo}) and their orthogonal complement is invariant under $\Delta_Z$,
we obtain for the resolvent
\[
Q_2(\lambda)=(\Delta_Z-\lambda)^{-1}=(\Delta_{\text{cusp}}-\lambda)^{-1}+(\Delta_{1;Z}-\lambda)^{-1}.
\]
According to Proposition \ref{kptr} the resolvent of $\Delta_{\text{cusp}}$ is compact. It has the kernel
\[
 K_{\text{cusp}}(\lambda,x_1,x_2)=\sum_i \frac{1}{\lambda_i-\lambda} \psi_i(x_1)\otimes\overline{\psi_i(x_2)}
\]
for eigenforms $\psi_i$ of $\Delta_{\text{cusp}}$ to the eigenvalue $\lambda_i$.

Let
\[
 \tau_0=\inf\big\{d_k^2+\mu\mid 0\le k\le f,
 \mu\in\text{spec} \{\Delta_{1,0}:\Omega^{p-k}(B,\H^k(F))\to\Omega^{p-k}(B,\H^k(F))\} \big\}.
\]
The spectrum of $\Delta_{1;Z}$ is $[\tau_0,\infty)$ with branch points at $(f/2-k)^2+\nu, 0\le k\le f $ for each eigenvalue 
 $\nu$ of $\Delta_{1,0}$.
Certainly $\Complex\smallsetminus \Reell^+$ is contained in the resolvent set of $\Delta_{1;Z}$.
We want to compute the integral kernel of $(\Delta_{1;Z}-\lambda)^{-1}$ for $\lambda\in \Complex\smallsetminus \Reell^+$
explicitly.

With respect to the decomposition $\Omega^p(Z)=\pi_2^*\Omega^p(M)\oplus \pi_2^*\Omega^{p-1}(M)$ we have (chapter \ref{Bedingungen})
\[
\tilde\rho_u\Delta_{1;Z}\tilde\rho_u^{-1}=\begin{pmatrix}
 \check{T}_u& 0\\
0&  \check{T}_u
\end{pmatrix} ,
\quad  \check{T}_u= -\partial_u^2+(f/2-\kappa)^2 +\Delta_{1,0}.
\]
A simple calculation shows that the integral kernel  $K_1^{(p)}$ of $(\tilde\rho_u\check{T}_u\tilde\rho_u^{-1}-\lambda)^{-1}$ for $\lambda\in\Complex\setminus\Reell$  is given by
\begin{multline} \label{reskerf}
K_1^{(p)}(\lambda,(u,y),(r,z)) \\ = \sum_k\sum_{\mu^{(k)}}^\infty \frac{i}{2} e^{a_k (u+r)}\frac{ e^{i |u-r| \sqrt{\lambda-d_k^2-\mu^{(k)}} }- e^{i (u+r) \sqrt{\lambda-d_k^2-\mu^{(k)}} }}{\sqrt{\lambda-d_k^2-\mu^{(k)}}}
\:(\phi_\mu^{(k)})(y) \otimes (\overline{\phi}_\mu^{(k)})(z),
\end{multline}
where $\phi_\mu^{(k)}\in \Omega^{p-k}(B, \H^k(F))$ form a local orthonormal basis for every $k$, with $\Delta_{1,0} \phi_\mu^{(k)}=\mu^{(k)} \phi_\mu^{(k)}$.

Finally
$
 K_{\text{cusp}}+K_1^{(p)}+du\wedge K_1^{(p-1)}
$
is the integral kernel of $(\Delta_Z-\lambda)^{-1}$.

Let $\xi_1,\xi_2,\chi_1,\chi_2$ be suitable cut-off functions with the properties as in e.g. \cite{APS}.
Then we define an operator $Q(\lambda):L^2\Omega^p(X)\to L^2\Omega^p(X)$ by its integral kernel
\[
Q(\lambda,x_1,x_2)= \chi_1(x_1) Q_1(\lambda,x_1,x_2) \xi_1(x_2)+\chi_2(x_1) Q_2(\lambda,x_1,x_2) \xi_2(x_2).
\]

From this formula we get as in  \cite{mu-rank1} that
\[
Q(\lambda)(\Delta_X-\lambda)=\text{Id}+\mathcal{K}(\lambda),\qquad\lambda\in \Complex \setminus\Reell^+
\]
where $\mathcal{K}(\lambda)$ are compact operators in $L^2\Omega^p(X)$, and that
\[
 (\Delta_X-\lambda)^{-1}-Q(\lambda)
\]
for $\lambda\in \Complex \setminus\Reell^+$ is a family of compact operators which is meromorphic in  $\lambda$. 
Altogether this shows that $Q(\lambda)$ is a parametrix for $(\Delta_X-\lambda)^{-1}$.

\subsubsection{Continuation to the Spectral Surface}\label{spektrfl}

The square roots $\lambda\mapsto \sqrt{\lambda-d_k^2-\mu_j}$ in \eqref{reskerf} are holomorphic in the complex plane $\Complex\setminus[\tau_0,\infty)$ and
they cannot be extended  holomorphically to the whole complex plane. 

Let $\mathfrak{I}=\{\tau_i\}_{i\in\Nat}$ a discrete set of real numbers with $-\infty<\tau_0<\tau_1<\ldots$.
We recall the construction of the \emph{spectral surface} $\Sigma_s$. This is a Riemannian surface on which all square roots $z\mapsto\sqrt{z-\tau_i}$ are holomorphic.
\index{$\Sigma_s$}As in \cite{guil} we define
\[
\Sigma_s=\left\{\Lambda=(\Lambda_\mu)\in \Complex^{\sharp \mathfrak{I}} \mid \forall \mu, \nu \in \mathfrak{I}: \Lambda_\mu^2+\mu=\Lambda_\nu^2+\nu \right\}
\]
and the projection $\pi_s:\Sigma_s\to \Complex$, \index{$\pi_s$}$\pi_s(\Lambda)=\Lambda_\mu^2+\mu$. Then $(\Sigma_s,\pi_s)$ is a covering of $\Complex$ with infinitely many leafs and branch points in $\mathfrak{I}$. For $\mu\in\mathfrak{I}$ the square roots are given by
\begin{equation}\label{wufu}
\sqrt{\Lambda-\mu} \defgleich\Lambda_\mu.
\end{equation}
We define the \emph{physical domain} \index{$\FP$}$\FP$, i.e. the leaf of ``positive'' square roots, as
\[
\FP\defgleich\left\{\Lambda\in \Sigma_s \mid \forall \mu \in \mathfrak{I}: \text{Im } \Lambda_\mu>0\right\}.
\]
This can be identified with  $\Complex\setminus [\tau_0,\infty)$ via $\pi_s$.
The boundary of $\FP$ consists of two rays $\partial_\pm \FP \simeq [\tau_0, \infty)$. Here $\Lambda\in \partial_\pm \FP$ means that
$\pi_s^{-1}(\pi_s(\Lambda)\pm i\eps)  \to \Lambda$ for $\eps\to 0$.

%
Let $\gamma_{\tau}:\Sigma_s\to\Sigma_s$ be a deck transform, which is given by a closed curve which encircles  
the single branch point $\tau$.
Then square roots \eqref{wufu} fulfill the following identities:
\begin{equation}\label{wurzelverh}
 \sqrt{\gamma_\mu\Lambda-\mu}=-\sqrt{\Lambda-\mu},\qquad  
\sqrt{\gamma_\mu\Lambda-\nu}=\sqrt{\Lambda-\nu},\quad \mu\neq\nu
\end{equation}
Finally let $\Sigma_s^\mu$ for $\mu\in \mathfrak{I}$ be the connected component of $\FP$ in $\pi_s^{-1}(\Complex\setminus[\mu,\infty))$.

Now in our case 
\[
\mathfrak{I}=\{\mu+d_k^2\mid \mu\in\text{spec}(\Delta_{1,0}:\Omega^*(B, \H^{k}(F))\to\Omega^*(B, \H^{k}(F)),\quad 0\le k\le f \}.
\]
On the corresponing spectral surface $\Sigma_s$ all square roots in \eqref{reskerf} are holomorphic functions.

Outside $\FP$ the integral kernel in \eqref{reskerf} does not define a continuous operator on $L^2\Omega^p(X)$. 
However we can introduce weighted $L^2-$spaces, on which the analytic continuation of the kernel will be a continuous operator.
For $\delta\in\Reell$ consider the weight operator $\omega_\delta$ on differential forms $\phi\in \Omega^p(Z)$ defined by
\[
\omega_\delta(\phi)(u,y)\defgleich e^{\delta u} \phi(u,y)
\]
The corresponding weighted $L^2-$space is
\[
L_\delta^2\Omega^p(Z)=\left\{\phi: Z\to\Reell\mid \phi\text{ measurable and }
\omega_\delta(\phi) \in L^2\Omega^p(Z)\right\}
\]
For $\delta>0$ we have
\[
L_\delta^2\Omega^p(Z) \subset L^2\Omega^p(Z) \subset L_{-\delta}^2\Omega^p(Z).
\]

For $\tau>0$ let $D_\tau(0)=\{z\in \Complex \mid |z|<\tau \}$ and $\tilde{\mu}(\tau)$
the smallest eigenvalue of $\Delta_{1,0}$ with $\tilde{\mu}(\tau)+d_k^2>\tau$. Let
\[
\Omega_\tau = ( \FP \cup \pi_s^{-1}(D_\tau(0)) ) \cap \Sigma_s^{\tilde{\mu}(\tau)},
\]
and choose $\delta>0$ with $\delta^2>\tau$. Then $|\Im \sqrt{\Lambda-d_k^2-\nu}|<\delta$ for $\Lambda\in \Omega_\tau$ and $\nu\le\tilde{\mu}(\tau)$. This implies
\begin{lem}
For all $\eps>0$ and $\delta>0$ with $\delta^2>\eps$ the parametrix $Q(\lambda)$ has a continuation to a meromorphic family of continuous operators $Q(\Lambda): L_\delta^2\Omega^p(X)\to L_{-\delta}^2\Omega^p(X),
\Lambda \in \Omega_\eps$.
\end{lem}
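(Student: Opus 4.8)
The plan is to analyze the explicit integral kernel of $Q(\lambda)$ piece by piece. By definition $Q(\lambda,x_1,x_2)=\chi_1 Q_1\xi_1+\chi_2 Q_2\xi_2$, and $Q_2$ splits further into the cusp kernel $K_{\text{cusp}}$ and the fiber-harmonic kernels $K_1^{(p)}+du\wedge K_1^{(p-1)}$ of \eqref{reskerf}. For the inner piece $\chi_1 Q_1\xi_1$ the continuation is immediate: $Q_1(\lambda,\cdot,\cdot)$ is the restriction to $X_1\times X_1$ of the resolvent kernel of the closed manifold $\hat X$, which is meromorphic in $\lambda\in\Complex$ with poles only at eigenvalues of $\Delta_{\hat X}$; precomposing with the holomorphic projection $\pi_s$ yields a meromorphic family in $\Lambda\in\Omega_\eps$, and since $\chi_1,\xi_1$ are compactly supported the weights $e^{\pm\delta u}$ are bounded on their supports, so $\chi_1 Q_1\xi_1$ maps $L_\delta^2\to L_{-\delta}^2$ continuously. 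For the cusp piece, Proposition \ref{kptr} provides an orthonormal system of genuine $L^2$-eigenforms $\psi_i$ of $\Delta_{\text{cusp}}$; because the effective potential grows like $e^{2u}$ along the end, these forms decay faster than any exponential rate, so $K_{\text{cusp}}=\sum_i(\lambda_i-\lambda)^{-1}\psi_i\otimes\overline{\psi_i}$ defines a continuous operator $L_\delta^2\to L_{-\delta}^2$, visibly meromorphic in $\lambda$ and hence in $\Lambda$.

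The substance of the lemma is the fiber-harmonic kernel \eqref{reskerf}. First I would split its double sum at the branch points $\tau_{k,\mu}\defgleich d_k^2+\mu^{(k)}$ into the finitely many \emph{low} modes with $\tau_{k,\mu}\le\tilde\mu(\eps)$ and the remaining \emph{high} modes. By construction $\Omega_\eps\subset\Sigma_s^{\tilde\mu(\eps)}$ encircles no branch point $\ge\tilde\mu(\eps)$, so on the high modes each root keeps its physical determination $\Im\sqrt{\Lambda-\tau_{k,\mu}}>0$ and the corresponding term decays in $u+r$ exactly as in $\FP$; using the square-root relations \eqref{wurzelverh} one checks that the low-mode roots may leave the physical sheet on $\pi_s^{-1}(D_\eps(0))$, but then the inequality $|\Im\sqrt{\Lambda-\tau_{k,\mu}}|<\delta$ recorded above (valid precisely because $\delta^2>\eps$) controls the resulting exponential growth $e^{|\Im\sqrt{\cdot}|(u+r)}$ by the weight $e^{-\delta(u+r)}$.

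The boundedness estimates then reduce to a Schur test. Conjugating \eqref{reskerf} by $\omega_{-\delta}$ and folding in the volume density of $\vol_Z$ turns the prefactor $e^{a_k(u+r)}$ together with the weights into a net exponential factor; on the high modes this factor, combined with the physical-sheet decay $e^{-|u-r|\,\Im\sqrt{\cdot}}$, is dominated by $C\,e^{-\eta|u-r|}$, against which the row and column integrals are uniformly bounded, while the $y,z$ variables are absorbed by orthonormality of the $\phi_\mu^{(k)}$ and Cauchy--Schwarz; summation over the high modes converges locally uniformly on $\Omega_\eps$ by the Weyl asymptotics for the elliptic operator $\Delta_{1,0}$ of Lemma \ref{delta10ell}, yielding a holomorphic family of bounded operators. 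On the low modes the same computation, together with $|\Im\sqrt{\cdot}|<\delta$, produces a net factor $e^{-\eta(u+r)}$ with $\eta>0$, so each of these finitely many terms is a bounded operator $L_\delta^2\to L_{-\delta}^2$, holomorphic on $\Omega_\eps$ except for the poles at the zeros of the denominator $\sqrt{\Lambda-\tau_{k,\mu}}$. The summand $du\wedge K_1^{(p-1)}$ is treated identically with $p$ replaced by $p-1$. Adding the three contributions gives the asserted meromorphic family $Q(\Lambda)\colon L_\delta^2\Omega^p(X)\to L_{-\delta}^2\Omega^p(X)$.

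The main obstacle I anticipate is the quantitative bookkeeping of exponents in the Schur test for the fiber-harmonic part: one must verify that after combining the intrinsic prefactor $e^{a_k(u+r)}$, the density factor from $\vol_Z$, the square-root exponentials, and the weight $e^{-\delta(u+r)}$, the \emph{net} exponent is strictly negative for every contributing mode and uniformly over compact subsets of $\Omega_\eps$. The delicate point is that this margin degenerates exactly on the boundary $\delta^2=\eps$, so the strict inequality $\delta^2>\eps$ must be used to keep the residual exponent $|\Im\sqrt{\Lambda-\tau_{k,\mu}}|-\delta$ bounded away from $0$ on the low modes; and the convergence of the high-mode series must be shown to be uniform in $\Lambda$ so as to preserve both boundedness and holomorphy under summation.
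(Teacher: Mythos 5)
Your overall strategy coincides with the paper's: the paper offers no separate argument for this lemma beyond the observation, made just before its statement, that $|\Im\sqrt{\Lambda-d_k^2-\nu}|<\delta$ on $\Omega_\eps$, so that the exponentials in the explicit kernel \eqref{reskerf} are dominated by the weights $e^{\pm\delta u}$; your treatment of the interior piece, the cusp piece and the finitely many low modes is a correct elaboration of exactly this. (For the cusp piece you do not actually need any decay of the $\psi_i$: since $\Delta_{\text{cusp}}$ has compact resolvent, $(\Delta_{\text{cusp}}-\lambda)^{-1}$ is already a meromorphic family of bounded operators on $L^2$ for all $\lambda\in\Complex$, and composing with the inclusions $L^2_\delta\subset L^2\subset L^2_{-\delta}$ suffices; the super-exponential decay you assert is unproven and unnecessary.)

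The one step that does not work as written is the summation over the high modes. A Schur test applied to the full kernel would require control of $\sum_{k,\mu}|c_{k,\mu}(u,r)|\,|\phi_\mu^{(k)}(y)|\,|\phi_\mu^{(k)}(z)|$, and even after performing the $u,r$ integration the resulting series of operator norms behaves like $\sum_{\mu}(d_k^2+\mu^{(k)})^{-1}$; by the very Weyl asymptotics you invoke, this series \emph{diverges} as soon as $\dim B\ge 2$, so convergence of the high-mode sum in the sense you describe is not available. What saves the argument, and what should replace the Schur test on the full kernel, is that the terms of \eqref{reskerf} are block diagonal with respect to the orthogonal decomposition of $\Pi_0 L^2\Omega^*(M)$ into eigenspaces of $\Delta_{1,0}$: the rank-one projections $\phi_\mu^{(k)}\otimes\overline{\phi}_\mu^{(k)}$ are mutually orthogonal, so the operator norm of the sum is the \emph{supremum} over $(k,\mu)$ of the norms of the one-dimensional blocks in the $M$-direction, each of which is a convolution-type operator in $u$ with norm $O\big(|w|^{-1}(\Im w)^{-1}\big)=O(1/\mu^{(k)})$ for $w=\sqrt{\Lambda-d_k^2-\mu^{(k)}}$, uniformly on compact subsets of $\Omega_\eps$ away from the finitely many low branch points. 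With this replacement the boundedness and the locally uniform convergence (hence meromorphy in $\Lambda$) follow, and your remaining bookkeeping for the low modes, where $|\Im\sqrt{\Lambda-d_k^2-\mu}|<\delta$ is traded against the weight $e^{-\delta(u+r)}$ using $\delta^2>\eps$, is correct.
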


For our later applications it is sufficent to know the analytic continuation of the resolvent in a neighbourhood of $\lambda=0$, i.e. the continuation to $\Sigma_s^{\tau_1}$, the Riemann surface for $z\mapsto\sqrt{z}$.
Let
\[
\tau_1=\min (\mathfrak{I}\smallsetminus \{0\})
\]
and $0<\eps<\tau_1$.
If 
\[
 \Lambda\in \Omega_\eps \defgleich\big(\FP\cup \pi_s^{-1}(B_\eps(0))\big)\cap \Sigma_s^{\tau_1},
\]
then all coefficients in the resolvent kernel \eqref{reskerf} lie in $L_{-\alpha}^2(\Reell^+,du)=e^{\alpha u} L^2(\Reell^+,du)$ with $\alpha^2>\tau_1$.
Thus for $\Lambda\in\Omega_\eps$ the resolvent kernel defines a continuous operator $L_{\alpha}^2\to L_{-\alpha}^2$.


As in \cite{mu-cusprk1} 
 we obtain
\begin{thm}\label{resfortform}
For  $\delta^2>\eps>0,$ $\eps<\tau_1,$ the resolvent $(\Delta-\lambda)^{-1}$ can be continued analytically to a family of operators 
$R(\Lambda)\in \mathscr{B}(L_\delta^2\Omega^p(X), L_{-\delta}^2\Omega^p(X))$ which is meromorphic in $\Lambda\in \Omega_\eps$.
\end{thm}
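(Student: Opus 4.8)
The plan is to invert the parametrix by means of the analytic Fredholm theorem. Recall from the parametrix construction that on $\Complex\setminus\Reell^+$ one has the identity $Q(\lambda)(\Delta_X-\lambda)=\text{Id}+\mathcal{K}(\lambda)$ with $\mathcal{K}(\lambda)$ compact on $L^2\Omega^p(X)$, and that by the preceding Lemma $Q(\Lambda)$ continues to a meromorphic family in $\mathscr{B}(L_\delta^2\Omega^p(X),L_{-\delta}^2\Omega^p(X))$ over $\Omega_\eps$. Applying $Q$ to the equation $(\Delta_X-\lambda)u=f$ gives $(\text{Id}+\mathcal{K}(\lambda))u=Q(\lambda)f$, so that formally $(\Delta_X-\lambda)^{-1}=(\text{Id}+\mathcal{K}(\Lambda))^{-1}Q(\Lambda)$, and the whole task reduces to continuing $(\text{Id}+\mathcal{K}(\Lambda))^{-1}$ meromorphically to $\Omega_\eps$.

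First I would check that $\mathcal{K}(\Lambda)$ itself extends to a meromorphic family of compact operators on the weighted space. By construction $\mathcal{K}(\lambda)$ is assembled from the commutators $[\Delta_X,\chi_i]$ of the cut-off functions with $Q_1$ and $Q_2$; these commutators are first order differential operators supported in the compact transition region $[0,1]\times M$, so $\mathcal{K}(\Lambda)$ maps $L_{-\delta}^2\Omega^p(X)$ into compactly supported forms and is therefore compact on $L_{-\delta}^2\Omega^p(X)$ by Rellich's lemma, regardless of the weight. Its meromorphic dependence on $\Lambda\in\Omega_\eps$ is inherited from that of $Q(\Lambda)$: the inner piece $Q_1$ is meromorphic in $\lambda$ with poles at the eigenvalues of $\Delta_{\hat X}$, and the outer piece is meromorphic through the explicit kernel \eqref{reskerf} on $\Sigma_s$. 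Hence $\text{Id}+\mathcal{K}(\Lambda)$ is a meromorphic family of Fredholm operators of index zero on $L_{-\delta}^2\Omega^p(X)$.

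Next I would secure invertibility of $\text{Id}+\mathcal{K}(\Lambda)$ at a base point. For $\lambda$ real with $\lambda\to-\infty$ both $Q_2(\lambda)=(\Delta_Z-\lambda)^{-1}$ and the inner resolvent $Q_1(\lambda)$ decay in operator norm, so $\|\mathcal{K}(\lambda)\|\to0$ and $\text{Id}+\mathcal{K}(\lambda)$ is invertible by a Neumann series. Since $\tau_0\ge0$, such $\lambda$ lie in the physical domain $\FP\subset\Omega_\eps$, so the hypotheses of the analytic Fredholm theorem are met: $(\text{Id}+\mathcal{K}(\Lambda))^{-1}$ exists and is meromorphic on all of $\Omega_\eps$, with poles forming a discrete set. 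Setting $R(\Lambda)\defgleich(\text{Id}+\mathcal{K}(\Lambda))^{-1}Q(\Lambda)$ then yields the desired meromorphic family in $\mathscr{B}(L_\delta^2\Omega^p(X),L_{-\delta}^2\Omega^p(X))$. On $\FP$, identified with $\Complex\setminus[\tau_0,\infty)$ via $\pi_s$, the parametrix identity shows $R(\Lambda)=(\Delta_X-\lambda)^{-1}$, so $R$ genuinely continues the resolvent; uniqueness of analytic continuation pins it down on all of $\Omega_\eps$.

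The main obstacle I anticipate is precisely the compactness and meromorphy of $\mathcal{K}(\Lambda)$ across the weighted spaces: one must verify that continuing the kernel \eqref{reskerf} off the physical sheet does not destroy compactness of the error, and this is exactly why it is essential that the cut-off commutators are compactly supported, so that the weights $e^{\pm\delta u}$ never enter $\mathcal{K}$. A secondary point requiring care is reconciling the poles of $R(\Lambda)$ produced by the Fredholm inversion with the true spectral data of $\Delta_X$; but since $\Delta_{\text{cusp}}$ has compact resolvent by Proposition \ref{kptr} and $R$ agrees with the genuine resolvent on $\FP$, its poles in $\Omega_\eps$ are exactly the $L^2$-eigenvalues together with the resonances arising from $\Delta_{1;Z}$, with no spurious contributions.
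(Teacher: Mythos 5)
Your proposal follows the same route the paper itself takes (it defers the details to \cite{mu-rank1} and \cite{mu-cusprk1}): continue the parametrix $Q(\Lambda)$ to the weighted spaces, continue the error term $\mathcal{K}(\Lambda)$ as a meromorphic family of compact operators, verify invertibility of $\mathrm{Id}+\mathcal{K}(\lambda)$ for $\lambda\to-\infty$ on the physical sheet, and invoke the analytic Fredholm theorem to define $R(\Lambda)=(\mathrm{Id}+\mathcal{K}(\Lambda))^{-1}Q(\Lambda)$. The only imprecision is in your compactness argument: with the paper's left-parametrix identity $Q(\lambda)(\Delta_X-\lambda)=\mathrm{Id}+\mathcal{K}(\lambda)$ the error is $\mathcal{K}(\lambda)=-\sum_i\chi_i Q_i(\lambda)[\Delta,\xi_i]$, so it is the \emph{input} (not the image) that is localized to the compact transition region; compactness then follows because the kernel is square integrable on $X\times\supp(d\xi_i)$ (Hilbert--Schmidt), or one works instead with the right parametrix, for which your Rellich argument applies verbatim.
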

\begin{bem}
A similar statement holds for any $\eps>0$, then $\delta$ must be chosen sufficiently large.
\end{bem}

Since $\Omega_0^p(X)\subset L_a^2\Omega^p(X)\subset L^2_{\text{loc}}\Omega^p(X)$ for all $a\in\Reell$ und $\Sigma_s=\bigcup_{\eps>0} \Omega_\eps$, from the above theorem follows
\begin{cor}
The resolvent $(\Delta-\lambda)^{-1}$ has an analytic continuation to a family of operators $R(\Lambda):\Omega_0^p(X)\to L^2_{\text{\upshape loc}}\Omega^p(X)$ which is meromorphic in $\Lambda\in \Sigma_s$.
\end{cor}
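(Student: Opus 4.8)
The plan is to assemble the global continuation on $\Sigma_s$ by gluing together the local continuations furnished by Theorem \ref{resfortform} and the remark following it, using the two facts recorded just before the statement: that $\Sigma_s=\bigcup_{\eps>0}\Omega_\eps$ and that $\Omega_0^p(X)\subset L_a^2\Omega^p(X)\subset L^2_{\text{loc}}\Omega^p(X)$ for every $a\in\Reell$. The point of the second chain of inclusions is that a \emph{compactly supported} form lies in every weighted space regardless of the sign of the weight, so once the operators are applied to such a form the choice of weight $\delta$ becomes immaterial.

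First I would fix $\phi\in\Omega_0^p(X)$ and a point $\Lambda\in\Sigma_s$. Since the $\Omega_\eps$ exhaust $\Sigma_s$, there is some $\eps>0$ with $\Lambda\in\Omega_\eps$; by the remark I may then pick $\delta$ with $\delta^2>\eps$ large enough that Theorem \ref{resfortform} provides a meromorphic family $R_\eps(\cdot)\in\mathscr{B}(L_\delta^2\Omega^p(X),L_{-\delta}^2\Omega^p(X))$ on $\Omega_\eps$. Because $\phi\in L_\delta^2\Omega^p(X)$, the element $R_\eps(\Lambda)\phi$ is defined and lies in $L_{-\delta}^2\Omega^p(X)\subset L^2_{\text{loc}}\Omega^p(X)$, and I would set $R(\Lambda)\phi\defgleich R_\eps(\Lambda)\phi$, viewed inside $L^2_{\text{loc}}\Omega^p(X)$. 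A convenient structural observation that streamlines the gluing is that the domains are nested: as $\eps$ increases both the disk $D_\eps(0)$ and the admissible sheet region grow, so $\Omega_{\eps_1}\subseteq\Omega_{\eps_2}$ whenever $\eps_1\le\eps_2$, and each $\Omega_\eps$ is connected and contains the physical sheet $\FP$.

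The main point requiring care is that $R(\Lambda)\phi$ does not depend on the auxiliary choices of $\eps$ and $\delta$, and this is where a uniqueness-of-continuation argument enters. On $\FP$ every $R_\eps(\Lambda)$ equals the genuine resolvent $(\Delta-\pi_s(\Lambda))^{-1}$, which is independent of all parameters. Given two admissible pairs $(\eps_1,\delta_1)$ and $(\eps_2,\delta_2)$, the two $L^2_{\text{loc}}$-valued maps $\Lambda\mapsto R_{\eps_i}(\Lambda)\phi$ are meromorphic on the common domain $\Omega_{\min(\eps_1,\eps_2)}$ and agree on its open subset $\FP$; since this domain is connected, the identity theorem for meromorphic functions with values in the Fr\'echet space $L^2_{\text{loc}}\Omega^p(X)$ (equivalently, after pairing against compactly supported test forms) forces them to coincide throughout. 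Hence $R(\Lambda)\phi$ is unambiguously defined for every $\Lambda\in\Sigma_s$.

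Finally, linearity of $R(\Lambda):\Omega_0^p(X)\to L^2_{\text{loc}}\Omega^p(X)$ is inherited from the linearity of each $R_\eps(\Lambda)$, and meromorphy is a local property: near any $\Lambda_0\in\Sigma_s$ the map $\Lambda\mapsto R(\Lambda)\phi$ coincides with $\Lambda\mapsto R_\eps(\Lambda)\phi$ for a suitable $\eps$, hence is meromorphic there, and therefore on all of $\Sigma_s$. The only genuinely delicate ingredient is the connectedness of the overlaps together with the precise sense of operator-valued meromorphy; both are disposed of by the nesting of the $\Omega_\eps$ and their containing the connected set $\FP$, and by interpreting meromorphy weakly after testing against forms in $\Omega_0^*(X)$.
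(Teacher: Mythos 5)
Your proposal is correct and follows essentially the same route as the paper, which simply deduces the corollary from Theorem \ref{resfortform} (and the remark allowing arbitrary $\eps$) together with the inclusions $\Omega_0^p(X)\subset L_a^2\Omega^p(X)\subset L^2_{\text{loc}}\Omega^p(X)$ and the exhaustion $\Sigma_s=\bigcup_{\eps>0}\Omega_\eps$. Your additional care about well-definedness of the glued family --- agreement of the local continuations on $\FP$ plus the identity theorem on the connected overlaps --- is a detail the paper leaves implicit, but it is the right justification.
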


\subsection{Generalized Eigenforms}\label{keif}

The generalized eigenforms provide the spectral resolution of the absolutely continuous part of $\dom(\Delta_X)$. Once the analytic continuation of the resolvent of $\Delta_X$ is known, it is possible to construct generalized eigenforms of $\Delta_X$ explicitely as follows.

An element in $\Omega^{l,k}=\Omega^l(B,\H^k(F))$ is a section in the finite dimensional vector bundle
$W_0^{k+l}=\Lambda^l T^* B\tensor \H^k(F)$ over $B$. There is an orthonormal basis $\{\phi_\mu\}$
of $L^2(B, W_0^{k+l})$ consisting of eigenforms of $\Delta_{1,0}$.
Let $z\mapsto\sqrt{z}$ be the branch of the square root, for which $\Im \sqrt{z}>0$ if $z\in\Complex\setminus\Reell^+$.
Let $\psi\in \Omega^{*,k}$ fiber harmonic with degree $k$ in the fibers, so that 
$
\Delta_{1,0}\psi=\mu\psi.
$
Let
\[
 e_{\mu,k,\pm}(\lambda,\psi,(u,x))\defgleich\tilde\rho_u^{-1} e^{\pm i\sqrt{\lambda-\mu-(f/2-k)^2}\, u} \psi(x), \quad x\in M
\]
be  the solutions of
\[
 \Delta_Z e_{\mu,k,\pm}(\lambda,\psi,(u,x)) = \lambda e_{\mu,k,\pm}(\lambda,\psi,(u,x)).
\]
For $\lambda\in\Complex\setminus[(f/2-k)^2+\mu,\infty)$  the section $e_{\mu,k,+}$ lies in  $L^2\Omega^*(Z,g_Z)$ and $e_{-}$ is not square integrable.
These $e_{\mu,\pm}$ do not satisfy the boundary conditions \eqref{relrbd}. Therefore we consider the linear combination
$
 \text{SIN}_\mu\defgleich\frac{1}{2i}(e_{\mu,+}-e_{\mu,-}).
$
A direct computation shows that 
\begin{multline}\label{tmp01}
  \Big\{ \text{SIN}_{\mu,k,\pm}(\psi),\;du\wedge \text{SIN}_{\mu,k,\pm}(\psi) \mid 0\le k\le f, (\mu,\psi) \text{ so that }\\ \psi \in\Omega^*(B,\H^k(F)),\;  \Delta_{1,0}\psi=\mu\psi \Big\}
\end{multline}
span a complete system of generalized eigenforms of $\Delta_Z$.

The square roots appearing in \eqref{tmp01} and with them $e_\pm$  admit an analytic continuation to $\Sigma_s$.
Let $\psi\in \Omega^{p-k}(B,\H^k(F))$ be an eigenform of $\Delta_{1,0}^{p-k}$ for the eigenvalue $\mu$, $\Lambda\in\Sigma_s$ and $\lambda=\pi_s(\Lambda)$.
Let $\chi$  be a smooth cutoff-function on $X$ with $\chi =1$ on $ [1,\infty)\times M\subset Z$ and $\chi=0$ on $X_0$.
Then $(\Delta_X-\lambda)(\chi e_{\mu,k,-}(\Lambda,\psi))\in \Omega_0^p(X)$ lies in the domain of the analytic continuation $R(\Lambda)$ of the resolvent of $\Delta_X$.

Now the generalized eigenforms of  $\Delta_X$ are defined as
\index{$E_{\mu}(\Lambda,\psi)$}
\begin{equation}
\begin{split}
  E_{\mu}(\Lambda,\psi)(p) &\defgleich \chi(p) e_{\mu,k,-}(\Lambda,\psi,p)-R(\Lambda)(\Delta_X-\lambda)(\chi(p) e_{\mu,k,-}(\Lambda,\psi,p))\label{geneig}
\end{split}
\end{equation}
and an analogous formula for $ E_{\mu}(\Lambda,du\wedge \psi)\in \Omega^{p+1}(X)$.
The generalized eigenform $E_\mu(\Lambda,\psi)$  is uniquely determined by the three properties
\begin{enumerate}[1)]
 \item $E_\mu(\Lambda,\psi) \in \Omega^{p}(X)$ and $E_\mu(\Lambda,\psi)$ is meromorphic in $\Lambda\in\Sigma_s$.
\item $\Delta E_\mu(\Lambda,\psi) =\pi_s(\Lambda) E_\mu(\Lambda,\psi)$ for $\Lambda\in\Sigma_s$
\item $E_\mu(\Lambda,\psi) - \chi e_{-,\mu}(\Lambda,\psi) \in L^2\Omega^{p}(X)$ for $\Lambda\in \FP$.\label{egs3}
\end{enumerate}
\begin{proof}
The proof is the same as in \cite{mu-cusprk1}: Property 1) follows from the corresponding properties of the resolvent and elliptic regularity;  2) and 3) from the definition \eqref{geneig} of $E$.
To show uniqueness, we assume there is a second form $G$ with  properties 1)-3). For $\Lambda\in\FP$ we have
$(E-G)(\Lambda)\in L^2\Omega^p(X)$ because of 3), and from 2)
\[
(\Delta_\text{loc}-\pi_s(\Lambda))(E-G)(\Lambda)=0.
\]
Since $\pi_s(\Lambda)\notin\Reell_+$, this is a contradiction to $\Delta$ being self-adjoint, so that $E=G$.
\end{proof}

By computing the wave operators \eqref{wof},
it can be shown as in \cite{mu-cusprk1} that $\{E_\mu\}$ are a complete system of generalized eigenforms of $\Delta_X P_{ac}$.

We have seen in Proposition \ref{wspek} that the essential spectrum of $\Delta_X$ is determined by fiber-harmonic forms. The fiber-harmonic part 
$\Pi_0(E_\mu|_{Z})$ is called the  \emph{constant term} of $E_\mu$\index{constant term}.

Finally we want to associate  a generalized eigenform to each cohomology class $[\phi]\in H^*(M), [\phi]\neq 0$.
Because of the horizontal Hodge-decomposition Proposition \ref{satzspec} we only need to consider generalized eigenforms for $\mu=0$ and define
\[
E(\Lambda,[\phi])\defgleich E_0(\Lambda,\phi_0)\quad\text{for the unique}\quad\phi_0\in \H^{p-k}(B,\H^k(F))\quad\text{with}\quad [\phi_0]=[\phi].
\]

\subsection{Asymptotics of the constant term}\label{kasym}
We want to determine the asymptotic expansion on the end $Z$ of the constant term $\Pi_0 E(\Lambda,\psi)$ for 
$
\psi\in \H^{p-k}(B,\H^k(F))\subset \H^p(M).
$
As before we define
\[
 e_{\pm,\nu}(\Lambda, \phi) = e^{(\ua\pm i\sqrt{\Lambda-\nu-\ud^2})u} \phi,\qquad \phi\in \Omega^*(B,\H^*(F)),\quad
\Delta_{1,0}\phi=\nu\phi,\quad \Lambda\in\Sigma_s.
\]
We expand in a basis of eigenforms of $\Delta_{1,0}:\Omega^{p-l,l}\to\Omega^{p-l,l}$ for each base-degree
$p-l$. Then the identity  $(\Delta_{Z}-\pi_s(\Lambda))\Pi_0 E(\Lambda,\psi)=0$ gives a system of ordinary differential equations for the coefficients leading to
\begin{multline}\label{asymt:0}
 \Pi_0 E(\Lambda,\psi) = e_{-,0}(\Lambda,\psi)+\sum_{l=0}^f e_{+,0}(\Lambda,\psi_0^{l})
+du\wedge \sum_{l=0}^f e_{+,0}(\Lambda,\hat\psi_0^{l})\\
+\sum_{l=0}^f \sum_{\nu_l>0}e_{+,\nu_l}(\Lambda,\psi_{\nu_l}^{l})
+\sum_{l=0}^f\sum_{\gamma_l>0}du\wedge e_{+,\gamma_l}(\Lambda,\hat\psi_{\gamma_l}^{l}).
\end{multline}
Here $\psi_\nu^l\in \Omega^{p-l,l} ,\;\hat\psi_\gamma^l \in \Omega^{p-l+1,l-1}$ are the eigenforms of $\Delta_{1,0}$,
\[
 \Delta_{1,0}\psi_{\nu_l}^l=\nu_l \psi_{\nu_l}^l,\qquad \Delta_{1,0}\hat\psi_{\gamma_l}^l=\gamma_l \hat\psi_{\gamma_l}^l.
\]
For better readability the indices of the eigenvalues of $\Delta_{1,0}$ will be suppressed in the following.

Let $\mathcal{E}_\nu^{a,b} \subset \Omega^a(B,\H^b(F))$ be the eigenspace of $\Delta_{1,0}$ for the eigenvalue $\nu$.
Now we define linear, in $\Lambda\in\Sigma_s$ meromorphic  operators
\[
S_{0\nu}^{[l]}(\Lambda), \quad T_{0\nu}^{[l]}(\Lambda):\mathcal{E}_0^{*,l}\to\mathcal{E}_\nu^{*,l}
\]
by \eqref{asymt:0} as\index{$T_{0\nu}^{[l]}$}
\[
 T_{0\nu}^{[l]}(\Lambda,\psi)\defgleich\psi_\nu^{l},\qquad  S_{0\gamma}^{[l]}(\Lambda,\psi)\defgleich\hat\psi_\gamma^{l}.
\]
We are particularily interested in the eigenvalue $0$  and the corresponding operator\index{$T_{00}^{[l]}$}
\[
 T_{00}^{[l]}(\Lambda,\cdot): \H^{p}(M)\to\H^{p-l}(B,\H^l(F)),\qquad 0\le l\le f.
\]
Sometimes we will use the abbreviated notation \index{$T_{00}$}$T_{00}=\sum_{l=0}^f T_{00}^{[l]}.$

\begin{satz}\label{sasym8}
The asymptotic expansion of the constant term of $E(\psi,\Lambda)$ with $\psi\in  \H^*(B,\H^k(F))$ is
\begin{multline}\label{aympt1}
 \Pi_0 E(\Lambda,\psi) = e_{-,0}(\Lambda,\psi)+e_{+,0}(\Lambda,T_{00}(\Lambda,\psi))\\
+\sum_{\nu>0}e_{+,\nu}(\Lambda,T_{0\nu}(\Lambda,\psi))
+\sum_{\gamma>0}du\wedge e_{+,\gamma}(\Lambda,S_{0\gamma}(\Lambda,\psi))
\end{multline}
with linear maps $T_{0\nu}(\Lambda), S_{0\nu}(\Lambda): \mathcal{E}_0\to \mathcal{E}_\nu$ which are meromorphic in $\Lambda$.

Similarily
\begin{multline}\label{aympt2}
 \Pi_0 E(\Lambda,du\wedge \psi) = du\wedge e_{-,0}(\Lambda,\psi)+du\wedge e_{+,0}(\Lambda,\check{T}_{00}(\Lambda,\psi))\\
+\sum_{\nu>0}du\wedge e_{+,\nu}(\Lambda,\check{T}_{0\nu}(\Lambda,\psi))
+\sum_{\gamma>0} e_{+,\gamma}(\Lambda,\check{S}_{0\gamma}(\Lambda,\psi))
\end{multline}
\end{satz}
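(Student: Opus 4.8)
The plan is to use that, under the standing assumptions (A) and (B), the fiber-harmonic projection $\Pi_0$ commutes with $\Delta_Z$, so that the constant term satisfies an explicitly solvable ordinary differential equation on the end. First I would note that on $Z$ the form $E(\Lambda,\psi)$ obeys $\Delta_Z E=\pi_s(\Lambda)E$, and since $[\Delta_Z,\Pi_0]=0$ by the computation in section~\ref{Bedingungen}, its fiber-harmonic part satisfies exactly $(\Delta_Z-\pi_s(\Lambda))\Pi_0 E(\Lambda,\psi)=0$ for all $u>0$. Conjugating by $\tilde\rho_u$ and inserting the block-diagonal expression $\tilde\rho_u\Pi_0(d^Z+\delta^Z)^2\Pi_0\tilde\rho_u^{-1}=(-\partial_u^2+(f/2-\kappa)^2+\Delta_{1,0})\Pi_0$ turns this into a second-order equation with $u$-independent coefficients, in which $\Delta_{1,0}$ only enters as a parameter. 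The decoupling of the $\alpha$- and $du\wedge\beta$-parts here rests on $\Pi_0 Q_u\Pi_0=0$, which in turn uses $d^{2,-1}=0$ from (A) together with $d^F\Pi_0=\delta^F\Pi_0=0$.

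Next I would solve the equation mode by mode. Decomposing $\tilde\rho_u\Pi_0 E$ into fiber degrees $l$ and expanding each component in an orthonormal basis of eigenforms $\phi_\nu^l$ of $\Delta_{1,0}$ with eigenvalue $\nu$, every scalar coefficient $c(u)$ satisfies $c''=(d_l^2+\nu-\pi_s(\Lambda))c$, so its solution space is spanned by $e^{\pm i\sqrt{\Lambda-\nu-d_l^2}\,u}$; pulling back along $\tilde\rho_u^{-1}$ identifies these with $e_{\pm,\nu}(\Lambda,\phi_\nu^l)$. A priori the constant term is therefore an arbitrary combination of all $e_{\pm,\nu}$. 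To remove the redundant terms I would invoke defining property~3) of $E$: for $\Lambda\in\FP$ one has $\Im\sqrt{\Lambda-\nu-d_l^2}>0$, so in the $\tilde\rho_u$-picture $e_{+,\nu}$ decays and $e_{-,\nu}$ grows in $u$. Hence $\Pi_0 E(\Lambda,\psi)-\chi e_{-,0}(\Lambda,\psi)\in L^2$ forces every $e_{-,\nu}$ coefficient to vanish except the prescribed incoming term $e_{-,0}(\Lambda,\psi)$, which is exactly the shape of \eqref{asymt:0}.

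It then remains to record the coefficients as the operators $T_{0\nu}^{[l]},S_{0\nu}^{[l]}$ and to verify linearity and meromorphy. Linearity in $\psi$ is immediate from the linearity of the construction \eqref{geneig}. For meromorphy I would extract each coefficient by composing the $\Lambda$-independent fiber-harmonic and $\Delta_{1,0}$-spectral projections with the map reading off the coefficient of $e^{+i\sqrt{\Lambda-\nu-d_l^2}\,u}$; the latter can be realized by evaluating a mode together with its $u$-derivative at a fixed $u_0$ and inverting the resulting $2\times2$ Wronskian system, whose entries are meromorphic in $\Lambda$ through the square root and through property~1) of $E$. Setting $T_{00}=\sum_l T_{00}^{[l]}$ yields \eqref{aympt1}. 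Finally \eqref{aympt2} follows verbatim: the block-diagonal operator acts by the same $\check T_u=-\partial_u^2+(f/2-\kappa)^2+\Delta_{1,0}$ on the $du\wedge\beta$-component, so feeding in the incoming datum $du\wedge e_{-,0}(\Lambda,\psi)$ reproduces the identical expansion with operators $\check T_{0\nu},\check S_{0\nu}$.

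I expect the main obstacle to be the meromorphy statement on all of $\Sigma_s$. The growth/$L^2$ dichotomy that pins down the expansion is transparent only on the physical sheet $\FP$, whereas the coefficient operators must be continued across the branch points of $\Sigma_s$, precisely where the two exponentials degenerate and the Wronskian extraction breaks down; one has to argue that the analytic continuation furnished by property~1) of $E$ keeps the coefficients finite-order meromorphic there rather than introducing essential singularities.
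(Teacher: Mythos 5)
Your ODE-plus-growth argument correctly reproduces the \emph{preliminary} expansion \eqref{asymt:0}: conjugating by $\tilde\rho_u$, decomposing into $\Delta_{1,0}$-eigenmodes, and discarding the exponentially growing branches via property 3) of the generalized eigenforms is exactly how the paper arrives at \eqref{asymt:0} in the paragraph preceding the proposition. But that is where the paper's proof of Proposition \ref{sasym8} \emph{begins}, not where it ends. The expansion \eqref{asymt:0} still contains the term $du\wedge\sum_{l} e_{+,0}(\Lambda,\hat\psi_0^{l})$, i.e.\ a contribution $du\wedge e_{+,0}(\Lambda,S_{00}(\Lambda,\psi))$ at the bottom eigenvalue $\nu=0$, and the entire content of the proposition is that this term --- and the corresponding term $e_{+,0}(\Lambda,\check{S}_{00}(\Lambda,\psi))$ in the expansion of $E(\Lambda,du\wedge\psi)$ --- vanishes; note that in \eqref{aympt1} the $du\wedge$-sum runs only over $\gamma>0$. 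Your $L^2$-criterion cannot detect this term: for $\Lambda\in\FP$ the mode $du\wedge e_{+,0}$ decays and is square integrable, hence perfectly compatible with property 3), so nothing in your argument forces its coefficient to be zero. The step from ``exactly the shape of \eqref{asymt:0}'' to ``yields \eqref{aympt1}'' silently identifies two different expansions.

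To close the gap you need the argument the paper actually gives. Compute $d^Z\Pi_0 E(\Lambda,\psi)$ term by term (using $d^{1,0}(T_{00}\psi)=0$, $d^F(T_{0\nu}\psi)=0$, $d^M(S_{00}\psi)=0$), observe that the result is again a generalized eigenform whose non-$L^2$ part is $(\ua-i\sqrt{\Lambda-\ud^2})\,du\wedge e_{-,0}(\Lambda,\psi)$, and invoke the uniqueness of generalized eigenforms to obtain $dE(\Lambda,\psi)=(\ua-i\sqrt{\Lambda-\ud^2})(\psi)\,E(\Lambda,du\wedge\psi)$; since $d^Z\Pi_0E$ contains no $e_{+,0}$-term without a $du$ factor, the coefficient $\check{S}_{00}$ must vanish in \eqref{asymt:du}. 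The companion identity $E(\Lambda,\astm\psi)=\ast E(\Lambda,du\wedge\psi)$, obtained the same way by applying $\ast$ to the expansion and comparing with \eqref{asymt:1} for $\astm\psi$, then kills $S_{00}$. Your final paragraph worries about the wrong obstacle: meromorphy of the coefficient operators on $\Sigma_s$ comes essentially for free from the meromorphic continuation of the resolvent entering the definition \eqref{geneig}; the genuinely nontrivial step is the vanishing of the two $\nu=0$ cross terms.
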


\begin{proof}

It remains to show that in the asymptotic expansion
\begin{multline}\label{asymt:1}
 \Pi_0 E(\Lambda,\psi) = e_{-,0}(\Lambda,\psi)+e_{+,0}(\Lambda,T_{00}(\Lambda,\psi))\\
+du\wedge e_{+,0}(\Lambda,S_{00}(\Lambda,\psi))\\
+\sum_{\nu>0}e_{+,\nu}(\Lambda,T_{0\nu}(\Lambda,\psi))
+\sum_{\gamma>0}du\wedge e_{+,\gamma}(\Lambda,S_{0\gamma}(\Lambda,\psi)).
\end{multline}
the term $du\wedge e_{+,0}(\Lambda,S_{00}(\Lambda,\psi))$ is actually zero.

The two series in the last line  in \eqref{asymt:1} are exponentially decreasing in  $u$, and the term $e_{-,0}(\Lambda,\psi)$ is the only one in \eqref{asymt:1}, which is not square integrable for
$\Lambda\in \FP$.

Similar to \eqref{asymt:1}, $E(\Lambda,du\wedge \psi)$ has the asymptotic expansion
\begin{multline}\label{asymt:du}
 \Pi_0 E(\Lambda,du\wedge \psi) = du\wedge e_{-,0}(\Lambda,\psi)+du\wedge e_{+,0}(\Lambda,\check{T}_{00}(\Lambda,\psi))\\
+ e_{+,0}(\Lambda,\check{S}_{00}(\Lambda,\psi))\\
+\sum_{\nu>0}e_{+,\nu}(\Lambda,\check{S}_{0\nu}(\Lambda,\psi))
+\sum_{\nu>0}du\wedge e_{+,\gamma}(\Lambda,\check{T}_{0\gamma}(\Lambda,\psi))
\end{multline}
with certain linear maps $\check{T}_{0,\nu}(\Lambda), \check{S}_{0,\nu}(\Lambda):\mathcal{E}_0\to \mathcal{E}_\nu$ which are meromorphic in $\Lambda$ as above.

Because $d^{1,0}(T_{00}\psi)=0,$ $d^F(T_{0\nu}\psi)=0,$ and  $d^M(S_{00}\psi)=0$, for $\Lambda\in \FP$
\begin{multline}\label{dzasympt}
 d^Z\Pi_0 E_0(\Lambda,\psi)
=(\ua-i\sqrt{\Lambda-\ud^2}) du\wedge e_{-,0}(\Lambda,\psi) \\
+(\ua+i\sqrt{\Lambda-\ud^2}) du\wedge e_{+,0}(\Lambda,T_{00}(\Lambda,\psi)) \\
 +\sum_{\nu>0}\Big\{(\ua+ i\sqrt{\Lambda-\nu-\ud^2}) du\wedge e_{+,\nu}(\Lambda,T_{0\nu}(\Lambda,\psi ))\\
+ e_{+,\nu}(\Lambda,d^{1,0}(\Lambda,T_{0\nu}(\Lambda,\psi))\Big\}\\
-\sum_{\gamma>0}\ du\wedge d^{1,0}(e_{+,\gamma}(\Lambda,S_{0\gamma}(\Lambda,\psi))).
\end{multline}
 Comparing \eqref{dzasympt} and \eqref{asymt:du} and using the uniqueness of generalized eigenforms gives for $\Lambda\in\Sigma_s$
 \begin{equation} \label{devse1}
  d E(\Lambda, \psi) = (\ua- i\sqrt{\Lambda-\ud^2})(\psi) E(\Lambda, du\wedge \psi).
 \end{equation}
But then because there is no corresponding term in \eqref{dzasympt}, $e_{+,0}(\Lambda,\check{S}_{00}(\Lambda,\psi))$ can not appear in 
\eqref{asymt:du}.
Furthermore
\begin{multline}
 \ast \Pi_0 E(\Lambda,du\wedge\psi) = e_{-,0}(\Lambda,\astm \psi) + e_{+,0}(\Lambda,\astm\check{T}_{00}(\Lambda,\psi)) \\
+\sum_{\nu>0}du\wedge e_{+,\nu}(\Lambda,\astm \check{T}_{0\nu}(\Lambda,\psi))
+\sum_{\nu>0}(-1)^p du\wedge e_{+,\nu}(\Lambda,\astm \check{S}_{0\nu}(\Lambda,\psi)).
\end{multline}
If this is compared  with \eqref{asymt:1} for $\astm\psi$, we first conclude
\[
 E(\Lambda,\astm \psi)= \ast E(\Lambda, du\wedge \psi),
\]
and then,  that the term $du\wedge e_{+,0}(\Lambda,S_{00}(\Lambda,\psi))$ does not appear in \eqref{asymt:1}.
\end{proof}

From this asymptotic expansion we can read off several functional equations for $E$ and the leading coefficient $T_{00}$. 
Let $\{\gamma_{\nu k}\}$ be the generators of  $\text{Aut}(\Sigma_s)$ as described in section  \ref{spektrfl}. Here $\gamma_{\nu k}$ is associated with the ramification point $\nu+d_k^2$.

\begin{cor}\label{funceq}
The generalized eigenforms satisfy the functional equations
\begin{subequations}
\begin{eqnarray} 
  d E(\Lambda, \psi) &=& (\ua- i\sqrt{\Lambda-\ud^2})(\psi) E(\Lambda, du\wedge \psi) \label{devse}\\
 E(\Lambda,\astm \psi)&=& \ast E(\Lambda, du\wedge \psi),\label{emitstern}\\
 (\ua- i\sqrt{\Lambda-\ud^2})(\psi)T_{00}(\Lambda,\astm \psi) &=&\astm (\ua+i\sqrt{\Lambda-\ud^2})( T_{00}(\Lambda,\psi)) \label{Tbez}\\
 E(\Lambda,\psi) &=& E(\gamma_{0k}\Lambda, T_{00}^{[k]}(\Lambda,\psi)) \label{fktgle}
\end{eqnarray}
\end{subequations}
\end{cor}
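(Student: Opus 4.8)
The plan is to extract all four identities from the two constant-term expansions \eqref{aympt1}, \eqref{aympt2} and the uniqueness of generalized eigenforms (properties 1)--3) of Section \ref{keif}). In fact \eqref{devse} and \eqref{emitstern} have already appeared inside the proof of Proposition \ref{sasym8}. For \eqref{devse} one differentiates \eqref{aympt1} to obtain \eqref{dzasympt}; both $dE(\Lambda,\psi)$ and $(\ua-i\sqrt{\Lambda-\ud^2})(\psi)\,E(\Lambda,du\wedge\psi)$ are meromorphic eigenforms for the eigenvalue $\pi_s(\Lambda)$ whose non-$L^2$ leading terms in the constant term agree (both equal $(\ua-i\sqrt{\Lambda-\ud^2})(\psi)\,du\wedge e_{-,0}(\Lambda,\psi)$), so uniqueness forces them to coincide. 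Equation \eqref{emitstern} is obtained identically: $\ast E(\Lambda,du\wedge\psi)$ is an eigenform whose constant-term leading coefficient is $e_{-,0}(\Lambda,\astm\psi)$, matching that of $E(\Lambda,\astm\psi)$.

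For \eqref{Tbez} I would not reuse uniqueness but simply compare coefficients. Matching the $du\wedge e_{+,0}$-coefficients of the two sides of \eqref{devse} (via \eqref{dzasympt} and \eqref{aympt2}) gives
\[
(\ua+i\sqrt{\Lambda-\ud^2})\,T_{00}(\Lambda,\psi)=(\ua-i\sqrt{\Lambda-\ud^2})(\psi)\,\check T_{00}(\Lambda,\psi),
\]
and matching the $e_{+,0}$-coefficients of the two sides of \eqref{emitstern} gives $T_{00}(\Lambda,\astm\psi)=\astm\check T_{00}(\Lambda,\psi)$. Eliminating the auxiliary operator $\check T_{00}$ and clearing the scalar $(\ua-i\sqrt{\Lambda-\ud^2})(\psi)$ yields \eqref{Tbez} directly.

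The genuine functional equation is \eqref{fktgle}, and I would establish it by uniqueness. Put $\phi:=T_{00}^{[k]}(\Lambda,\psi)$ and $\Phi(\Lambda):=E(\gamma_{0k}\Lambda,\phi)$; it is meromorphic and, because $\pi_s\circ\gamma_{0k}=\pi_s$, satisfies $\Delta\Phi=\pi_s(\Lambda)\Phi$, so properties 1) and 2) hold. Property 3) --- that $\Phi(\Lambda)-\chi e_{-,0}(\Lambda,\psi)\in L^2$ on $\FP$ --- is the substance. Expanding the constant term of $\Phi$ by \eqref{aympt1} at the point $\gamma_{0k}\Lambda$ and applying \eqref{wurzelverh}, which flips $\sqrt{\,\cdot\,-d_k^2}$ while fixing the remaining roots, the leading term $e_{-,0}(\gamma_{0k}\Lambda,\phi)$ becomes the square-integrable mode $e_{+,0}(\Lambda,\phi)$, whereas the fiber-degree-$k$ summand $e_{+,0}(\gamma_{0k}\Lambda,T_{00}^{[k]}(\gamma_{0k}\Lambda,\phi))$ reverts to the growing mode $e_{-,0}(\Lambda,T_{00}^{[k]}(\gamma_{0k}\Lambda,\phi))$. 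Property 3) thus reduces to matching this growing mode with the prescribed term $e_{-,0}(\Lambda,\psi)$.

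The hard part is precisely this matching. Since $d_{f-k}=d_k$, the transformation $\gamma_{0k}$ also flips the root attached to fiber degree $f-k$, so a priori the constant term of $\Phi$ carries growing modes in both degrees $k$ and $f-k$; their cancellation against $e_{-,0}(\Lambda,\psi)$ is equivalent to the reflection relations $T_{00}^{[k]}(\gamma_{0k}\Lambda)\,T_{00}^{[k]}(\Lambda)=\mathrm{id}$ and $T_{00}^{[f-k]}(\gamma_{0k}\Lambda)\,T_{00}^{[k]}(\Lambda)=0$. I would deduce these from the self-adjointness of $\Delta_X$: applying Green's formula to the relevant generalized eigenforms over the truncated cusp $[0,T]\times M$ and letting $T\to\infty$, the boundary terms --- computed from the asymptotic expansions --- furnish exactly these algebraic identities for the scattering operators, by the same computation that underlies the Maa{\ss}--Selberg relations of Proposition \ref{maass-selb1}. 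Once the growing modes cancel, $\Phi-E(\Lambda,\psi)$ is an $L^2$ eigenform for the non-real eigenvalue $\pi_s(\Lambda)$, hence vanishes, which is \eqref{fktgle}.
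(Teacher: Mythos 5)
Your treatment of \eqref{devse}, \eqref{emitstern} and \eqref{Tbez} matches the paper exactly: the first two are obtained in the proof of Proposition \ref{sasym8} by comparing the non-$L^2$ leading terms of the constant-term expansions and invoking uniqueness, and \eqref{Tbez} follows by eliminating $\check T_{00}$ from the two coefficient identities (the paper's \eqref{fftmat} together with $T_{00}(\Lambda,\astm\psi)=\astm\check T_{00}(\Lambda,\psi)$). No issue there.

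For \eqref{fktgle}, however, you have run the uniqueness argument in the wrong direction, and this creates a genuine gap. The paper expands the constant term of $E(\gamma_{0k}\Lambda,\psi)$ itself via \eqref{aympt1} and \eqref{wurzelverh}: the incoming mode $e_{-,0}(\gamma_{0k}\Lambda,\psi)$ becomes square integrable, while the degree-$k$ outgoing mode becomes the growing mode $e_{-,0}(\Lambda,T_{00}^{[k]}(\gamma_{0k}\Lambda,\psi))$; uniqueness then yields $E(\gamma_{0k}\Lambda,\psi)=E(\Lambda,T_{00}^{[k]}(\gamma_{0k}\Lambda,\psi))$ with no further input, and \eqref{fktgle} follows by substituting $\Lambda\mapsto\gamma_{0k}\Lambda$ and using $\gamma_{0k}^2=\mathrm{id}$. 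Your version starts instead from $\Phi(\Lambda)=E(\gamma_{0k}\Lambda,T_{00}^{[k]}(\Lambda,\psi))$ and therefore needs the composition laws $T_{00}^{[k]}(\gamma_{0k}\Lambda)T_{00}^{[k]}(\Lambda)=\mathrm{id}$ and $T_{00}^{[f-k]}(\gamma_{0k}\Lambda)T_{00}^{[k]}(\Lambda)=0$ as hypotheses. But these identities are precisely what one deduces \emph{from} \eqref{fktgle} by comparing the constant terms of its two sides; they are not delivered by the Green's-formula computation you invoke, which at a single spectral parameter only produces the adjoint relation $\sce{T_{00}(\bar s,\phi)}{\psi}=\sce{\phi}{T_{00}(s,\psi)}$ of Lemma \ref{lemma3} and the norm identities of Proposition \ref{maass-selb1}, not a composition law relating the two sheets over $\pi_s(\Lambda)$. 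As written your argument is therefore circular, or at best leaves its central step unproved. Your observation that $\gamma_{0k}$ also flips the square root attached to fiber degree $f-k$ (since $d_{f-k}=d_k$) is a legitimate subtlety which the paper passes over silently; but the remedy is not to establish reflection relations first --- in the paper's direction of argument it at most means that the growing part of $\Pi_0E(\gamma_{0k}\Lambda,\psi)$ could also carry a degree-$(f-k)$ contribution, which uniqueness would simply absorb into the argument of $E(\Lambda,\cdot)$ on the right-hand side without requiring any product identity as input.
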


\begin{proof}

Equations \eqref{devse} and \eqref{emitstern} have been shown in the proof of Proposition \ref{sasym8}.
Comparing the asymptotic expansion in these gives
\begin{equation} \label{fftmat}
(\ua- i\sqrt{\Lambda-\ud^2})(\psi)\check{T}_{00}(\Lambda,\psi) =(\ua+i\sqrt{\Lambda-\ud^2})( T_{00}(\Lambda,\psi)).
\end{equation}
From \eqref{emitstern} and \eqref{fftmat}
\begin{equation*}
  T_{00}(\Lambda,\astm \psi)= \astm \check{T}_{00}(\Lambda,\psi),
\end{equation*}
which leads to the identity \eqref{Tbez}.

Using \eqref{wurzelverh} in the expansion  \eqref{aympt1}, we get
\begin{multline*}
 \Pi_0 E(\gamma_{0k}\Lambda,\psi) = e_{+,0}(\gamma_{0k}\Lambda,\psi)
+e_{-,0}(\Lambda,T_{00}^{[k]}(\gamma_{0k}\Lambda,\psi))
+e_{+,0}(\Lambda,T_{00}^{[\neq k]}(\gamma_{0k}\Lambda,\psi))\\
+\sum_{\nu>0}e_{+,\nu}(\Lambda,T_{0\nu}(\gamma_{0k}\Lambda,\psi))
+\sum_{\eta>0}du\wedge e_{+,\eta}(\Lambda,S_{\eta}(\gamma_{0k}\Lambda,\psi)).
\end{multline*}
The only term on the right hand side, which is not in $L^2$, is $e_{-,0}(\Lambda,T_{00}^{[k]}(\gamma_{0k}\Lambda,\psi))$. 
Uniqueness of the generalized eigenforms gives
\begin{equation*}
 E(\gamma_{0k}\Lambda,\psi) = E(\Lambda, T_{00}^{[k]}(\gamma_{0k}\Lambda,\psi))
\end{equation*}
and the claim follows from $\gamma_{0k}^2=\text{id}$.
\end{proof}

\subsection{Maa{\ss}--Selberg Relations}\label{kmsb}

The Maa{\ss}--Selberg relations will provide us with important information about the position of poles and the asymptotic behavior of generalized eigenforms. The proceeding is  similar to \cite{roelcke}.

For $\psi\in \H^p(M)$ let $\psi^{[k]}$ be the projection of $\psi$ onto $\H^{p-k}(B,\H^k(F))$.

Write $X=X_r\sqcup_M Z_r$ with $Z_r\defgleich[r,\infty)\times M$.
Let $E=E(\phi,\Lambda)=E_{\mu=0}(\phi,\Lambda)$ and define
\[
E^r = E-\Pi_0(E|_{Z_r}).
\]
Because of property \ref{egs3}) of the generalized eigenforms, we have $E^r(\phi,\Lambda)\in L^2\Omega^*(X)$  for $\Lambda\in\FP$.

\begin{satz}\label{maass-selb1} Let $\phi\in \H^*(B,\H^k(F))$.

Let $\nu_1$\index{$\nu_1$} be the smallest positive eigenvalue of $\Delta_{1,0}$ and 
$\tau$ with $0<\tau<\min\{1/4,\nu_1\}$ not a pole of $E(.,\phi)$.

If the fiber-degree of $\phi$ is $k\neq f/2$, the cut-off generalized eigenform $E^r(\tau,\phi)$ has the norm
\begin{multline*}
 \|E^r(\tau,\phi)\|_{L^2(X)}^2 = 
 \frac{e^{2 r \sqrt{d_k^2-\tau}}}{2\sqrt{d_k^2-\tau}} \| \phi \|^2 \\
+r\cdot \sqrt{d_k^2-\tau}\;   \Big\{  \sce{T_{00}(\tau,\phi)}{\phi }-
  \sce{\phi }{T_{00}(\tau,\phi)}\Big\} 
+r\cdot \| T_{00}^{[f/2]}(\tau,\phi)\|^2\\
-i \sqrt{\tau} \Big\{ \Sce{\textstyle{\frac{d}{d\Lambda}|_\tau} T_{00}^{[f/2]}(.,\phi)}{T_{00}^{[f/2]}(\tau,\phi)} -
   \Sce{T_{00}^{[f/2]}(\tau,\phi)} {\textstyle{\frac{d}{d\Lambda}|_\tau} T_{00}^{[f/2]}(.,\phi)}\Big\}\\
-\sum_{\substack{\nu\ge 0; l\\ \nu+d_l^2>0}} \frac{e^{-2 r \sqrt{d_l^2+\nu-\tau}}}{2\sqrt{d_l^2+\nu-\tau}} \| T_{0\nu}^{[l]}(\tau,\phi) \|^2
-\sum_{\gamma> 0; l} \frac{e^{-2 r \sqrt{d_l^2+\gamma-\tau}}}{2\sqrt{d_l^2+\gamma-\tau}} \| S_{0\gamma}^{[l]}(\tau,\phi) \|^2.
\end{multline*}
For $\phi\in \H^*(B,\H^{f/2}(F))$,
\begin{multline*}
 \|E^r(\tau,\phi)\|_{L^2(X)}^2 = 
r (\| \phi\|^2 +  \| T_{00}^{[f/2]}(\tau,\phi) \|^2) \\
-i \sqrt{\tau} \Big\{ \Sce{\textstyle{\frac{d}{d\Lambda}|_\tau} T_{00}^{[f/2]}(.,\phi)}{T_{00}^{[f/2]}(\tau,\phi)} -
   \Sce{T_{00}^{[f/2]}(\tau,\phi)} {\textstyle{\frac{d}{d\Lambda}|_\tau} T_{00}^{[f/2]}(.,\phi)}\Big\}\\
 +\frac{1}{2i\sqrt{\tau}}\Big\{ e^{2 i \sqrt{\tau} r}\sce{T_{00}(\tau,\phi)}{\phi } 
    -e^{-2 i \sqrt{\tau} r}\sce{\phi }{T_{00}(\tau,\phi)} \Big\} \\
-\sum_{\substack{\nu\ge 0; l\\ \nu+d_l>0}} \frac{e^{-2 r \sqrt{d_l^2+\nu-\tau}}}{2\sqrt{d_l^2+\nu-\tau}} \| T_{0\nu}^{[l]}(\tau,\phi) \|^2
-\sum_{\gamma> 0; l} \frac{e^{-2 r \sqrt{d_l^2+\gamma-\tau}}}{2\sqrt{d_l^2+\gamma-\tau}} \| S_{0\gamma}^{[l]}(\tau,\phi) \|^2.
\end{multline*}

\end{satz}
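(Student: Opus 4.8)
The plan is to follow the classical Maa{\ss}--Selberg scheme (as in \cite{roelcke}), reducing the computation to a one-dimensional Wronskian of the \emph{constant terms} at the truncation level $u=r$. The two features that make this work are Green's second identity, which lets me avoid any explicit knowledge of $E$ on the compact core, and a cancellation that eliminates the non-fiber-harmonic part $\Pi_\perp E$ (in particular the cuspidal contributions), so that only the explicit expansion \eqref{aympt1} of Proposition \ref{sasym8} survives.

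First I would not fix a single parameter but consider two nearby points $\Lambda_1,\Lambda_2\in\Sigma_s$ close to $\tau$ and study the sesquilinear quantity $\sce{E^r(\Lambda_1,\phi)}{E^r(\Lambda_2,\phi)}_{L^2(X)}$. Both truncated forms lie in $L^2(X)$ by property \eqref{egs3}) of the generalized eigenforms, so the pairing is well defined and, by elliptic regularity, holomorphic in $\Lambda_1$ and antiholomorphic in $\Lambda_2$. Splitting $X=X_r\sqcup Z_r$, the forms agree with $E(\Lambda_i,\phi)$ on $X_r$ and with $\Pi_\perp E(\Lambda_i,\phi)$ on $Z_r$. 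Under conditions (A) and (B) the operator $\Delta_Z$ commutes with $\Pi_0$ and $\Pi_\perp$, so each of $E$ and $\Pi_\perp E$ solves $\Delta=\lambda_i$ on its region. Applying Green's identity on the compact domain $X_r$, and on $Z_r$ where $\Pi_\perp E$ decays exponentially by Proposition \ref{kptr} (so there is no contribution from $u=\infty$), converts both volume integrals into boundary pairings over $\{r\}\times M$.

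The decisive point is that the boundary pairing coming from $X_r$ splits, by $L^2(M)$-orthogonality of $\Pi_0$ and $\Pi_\perp$ at fixed $u=r$ (the cross terms vanish since $\partial_u$ preserves this $u$-independent splitting), into a fiber-harmonic Wronskian $B^0_r$ and a transverse Wronskian $B^\perp_r$, while the pairing from $Z_r$ yields exactly $-B^\perp_r$ by the opposite orientation of $\{r\}\times M$. Adding the contributions, the transverse terms cancel and one is left with
\[
\sce{E^r(\Lambda_1,\phi)}{E^r(\Lambda_2,\phi)}_{L^2(X)}=\frac{B^0_r(\Lambda_1,\Lambda_2)}{\lambda_1-\overline{\lambda_2}},
\]
where $B^0_r$ depends only on the explicit constant term \eqref{aympt1}. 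After the isometry $\tilde\rho_u$ the constant term is a finite sum of exponentials $e_{\pm,\nu}$, and $B^0_r$ is their ordinary Wronskian at $u=r$, assembled degree by degree out of $\phi$, $T_{00}(\Lambda,\phi)$, $T_{0\nu}$ and $S_{0\gamma}$; no knowledge of $E$ on the core nor of the shape of $\Pi_\perp E$ is needed.

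Finally I pass to the norm by letting $\Lambda_2\to\Lambda_1=\tau$. Since $\tau$ is real and not a pole, the ratio is of the form $0/0$ and I resolve it by l'H\^opital, differentiating $B^0_r$ in $\Lambda_2$; this limit is where the two stated formulas diverge and where the main work lies. For a fiber degree $k\neq f/2$ one has $d_k\geq\tfrac12$ while $\sqrt\tau<\tfrac12$, so the $\nu=0$ modes $e_{\pm,0}$ are real exponentials, and evaluating their Wronskian at $u=r$ in the l'H\^opital limit produces the growing term $\tfrac{e^{2r\sqrt{d_k^2-\tau}}}{2\sqrt{d_k^2-\tau}}\|\phi\|^2$, the $r$-linear antisymmetric expression in $T_{00}$, and the exponentially small sums over the modes $\nu,\gamma>0$. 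For $k=f/2$ one has $d_{f/2}=0$, so $e_{\pm,0}$ become the oscillatory factors $e^{\pm i\sqrt\tau u}$ of constant modulus, which is exactly what turns the evaluation into the terms $r(\|\phi\|^2+\|T^{[f/2]}_{00}\|^2)$ together with the oscillatory boundary terms $e^{\pm 2i\sqrt\tau r}$; in both cases the $\Lambda_2$-dependence of the oscillatory $f/2$-component $T^{[f/2]}_{00}$, hit by the l'H\^opital differentiation, generates the derivative terms in $\tfrac{d}{d\Lambda}\big|_\tau T^{[f/2]}_{00}$. The main obstacle is carrying out this degenerate ($k=f/2$) limit cleanly and tracking signs and conjugations in the Wronskian so that the antisymmetric combinations emerge exactly as stated; the cancellation of the transverse part and the reduction to $B^0_r$ are by comparison formal.
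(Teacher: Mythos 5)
Your proposal follows essentially the same route as the paper's proof: Green's formula on $X_r\sqcup Z_r$ with the transverse boundary Wronskians cancelling by the opposite orientations of $\{r\}\times M$, reduction to the explicit Wronskian of the constant term \eqref{aympt1} (the paper's identities \eqref{g3}--\eqref{g4}), and a difference-quotient resolution of the $0/0$ at real $\tau$ (the paper sets $\lambda_1=\lambda_2=\tau+i\eps$, divides by $-2i\eps$, and uses the vanishing of the $O(1)$ part, its equation \eqref{awehncv}, which is exactly the numerator-vanishing condition your l'H\^opital step relies on). The case split at $k=f/2$ via the real-exponential versus oscillatory $\nu=0$ modes and the origin of the $\frac{d}{d\Lambda}T_{00}^{[f/2]}$ terms are identified correctly, so the plan is sound.
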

\begin{proof}
Let $\phi\in \H^*(M)$ and $\lambda\in\Complex\smallsetminus \Reell_+$. 
By definition $E^r\upharpoonleft Z_r=\Pi_\perp(E\upharpoonleft Z_r)$, and under the given conditions $\Delta=\Delta_Z$ leaves fiber-harmonic forms on $Z_r$ invariant. Let $M_r=\{r\}\times M$ with the Riemannian metric $\pi^* g^B+e^{-2r}g^F$.
The Green's formula gives
\begin{multline*}
\Scr{\Delta E^r}{E^r}_{L^2(X)}-\Scr{E^r}{\Delta E^r}_{L^2(X)} \\
=\Sce{\Pi_0 E|_{M_r}}{\nabla^Z_{\partial/\partial u} \Pi_0 E|_{M_r}}_{\!L^2(M_r)} 
-\Sce{\nabla^Z_{\partial/\partial u} \Pi_0 E|_{M_r}}{\Pi_0 E|_{M_r}}_{\!L^2(M_r)} 
\end{multline*}
Here we used that $\frac{\partial}{\partial u}$ is outer unit normal vector field to $X_r$ and inner unit normal vector field to $Z_r$.
But $\nabla^Z_{\partial/\partial u}=\frac{\partial}{\partial u}+\kappa$ and
$\tilde\rho_u=e^{-(f/2-\kappa) r}:L^2\Omega^*(M_r,g_u^M)\to L^2\Omega^*(M,g^M)$ is an isometry. Let $g^M=\pi^* g^B+g^F$ the unscaled Riemannian metric on $M$. With $\Sce{}{}_M$ denote the induced $L^2-$norm on $M$. 
Let $\Lambda_1, \Lambda_2\in \FP=\Complex\setminus\Reell^+$, d.h. $\Im \Lambda_1>0, \Im \Lambda_2>0$ and $\lambda_1=\pi_s(\Lambda_1), \lambda_2=\pi_s(\Lambda_2)$. Also let $\lambda_1\neq \overline\lambda_2$,
and $\Lambda_1, \Lambda_2$ should not be poles of $E(\phi)$ and $E(\psi)$ respectively.
\begin{align*}
(\overline\lambda_2-\lambda_1)&\Scr{E^r(\phi,\Lambda_1)}{E^r(\psi,\Lambda_2)}_{L^2(X)} \\
=\;&  \Scr{E^r(\phi,\Lambda_1)}{\Delta E^r(\psi,\Lambda_2)} 
-\Scr{\Delta E^r(\phi,\Lambda_1)}{E^r(\psi,\Lambda_2)}\\
=\;& \Sce{\textstyle\frac{\partial}{\partial u}\bei{r} \tilde\rho_u\Pi_0 E(\phi,\Lambda_1)}{\tilde\rho_u\Pi_0 E(\psi,\Lambda_2)(r,\cdot)}_{\!M}  \\
 &-\Sce{\tilde\rho_u\Pi_0 E(\phi,\Lambda_1)(r,\cdot)}{\textstyle\frac{\partial}{\partial u}\bei{r} \tilde\rho_u\Pi_0 E(\psi,\Lambda_2)}_{\!M} 
\end{align*}

In the sequel we use the notation
\begin{equation} \label{notwu}
 s_\nu^\pm(\Lambda_1,\Lambda_2)=\sqrt{\Lambda_1 -\nu-\ud^2}\pm \sqrt{\overline\Lambda_2 -\nu-\ud^2},\qquad \nu\in\sigma(\Delta_{1,0}).
\end{equation}
Here the branch of the square root, for which $\Im \sqrt{z}>0$ for $z\in \Complex\setminus\Reell$ is chosen;
this implies $\overline{\sqrt{\lambda}}=-\sqrt{\overline{\lambda}}$.

From the asymptotic expansion \eqref{aympt1}
of the constant term $\Pi_0 E(\Lambda,\phi)$ we obtain
\begin{subequations}
\begin{equation} \label{g3}
 \begin{aligned}
(\overline\lambda_2-\lambda_1) & \Scr{E^r(\phi,\Lambda_1)}{E^r(\psi,\Lambda_2)}\\
=\;& -i s_0^- e^{-is_0^+ r }\sce{\phi}{\psi} 
+\sum_{\nu\ge 0} i s_\nu^-  e^{i s_\nu^+ r }  \sce{T_{0\nu}(\Lambda_1,\phi)}{T_{0\nu}(\Lambda_2,\psi)}\\
&+ i s_0^+ \Big\{  e^{i s_0^- r} \Sce{T_{0 0}(\Lambda_1,\phi)}{\psi} 
-e^{-i s_0^-r} \Sce{\phi}{T_{00}(\Lambda_2,\psi)}\Big\}\\
&+\sum_{\gamma> 0} i s_\gamma^-  e^{i s_\gamma^+ r }  \sce{S_{0\gamma}(\Lambda_1,\gamma)}{S_{0\gamma}(\Lambda_2,\psi)}\\
\end{aligned}
\end{equation}
Here  we used the notation
 $\theta(\ua)\sce{\phi}{\psi}\defgleich\sum_l \theta(a_l)\sce{\phi^{[l]}}{\psi^{[l]}}$ for a function $\theta$, where $\psi^{[l]}$ is the projection of $\psi$ onto $\H^{p-l}(B, \H^l(F))$.

Finally because of $s^+ s^-(\Lambda_1,\Lambda_2)=\lambda_1-\overline\lambda_2$
\begin{multline} \label{g4}
\Scr{E^r(\phi,\Lambda_1)}{E^r(\psi,\Lambda_2)}=\\
  \frac{i}{s_0^+}e^{-is_0^+ r }\sce{\phi}{\psi}
- \frac{i}{s_0^-}  \Big\{  e^{i s_0^- r} \Sce{T_{0 0}(\Lambda_1,\phi)}{\psi} 
-e^{-i s_0^-r} \Sce{\phi}{T_{00}(\Lambda_2,\psi)}\Big\}\\
-\sum_{\nu\ge 0} \frac{i}{s_\nu^+}  e^{+i s_\nu^+ r }  \sce{T_{0\nu}(\Lambda_1,\phi)}{T_{0\nu}(\Lambda_2,\psi)}
-\sum_{\gamma> 0} \frac{i}{s_\gamma^+}  e^{+i s_\gamma^+ r }  \sce{S_{0\gamma}(\Lambda_1,\phi)}{S_{0\gamma}(\Lambda_2,\psi)}.
\end{multline}
\end{subequations}

Let $\tau \in \partial_+\FP\simeq \Complex\setminus\Reell^+$, so that $0<\tau<\min\{1/4,\nu_1\}$ and $\eps>0$. In addition we assume that $E(\Lambda,\phi)$
is holomorphic for $\Lambda$ in a neighbourhood of
$\pi_s^{-1}(\tau)$.

The remaining steps are the same as in the proof of Proposition 9.17 in \cite{mu-cusprk1}:
We let $\lambda_1, \lambda_2 \to \tau$ in the upper half plane $\Im \lambda_1>0,$ $\Im \lambda_2>0$.
Then by choice of the square root 
\begin{align*}
s_\nu^-(\lambda_1,\lambda_2)&\to  2\sqrt{\tau-\nu-\ud^2},&
s_\nu^+(\lambda_1,\lambda_2)&\to 0  &&\text{ for } \tau>\nu+\ud^2\\
s_\nu^+(\lambda_1,\lambda_2)&\to 2 i \sqrt{\nu+\ud^2-\tau},& 
 s_\nu^-(\lambda_1,\lambda_2)&\to  0 &&\text{ for } \tau<\nu+\ud^2.
\end{align*}
Only for terms in $\H^{p-f/2}(B,\H^{f/2}(F))$, that is for fiber-degree $f/2$ and $\nu=0$, the inequality $\tau > \nu +\ud^2$ holds. 
We set $\lambda_1=\lambda_2=\tau+i\eps$ in \eqref{g3}.
For $\eps\to 0$, because $\tau$ is no pole:
\begin{multline}\label{awehncv}
\sqrt{\tau}  \| \phi^{[f/2]}\|^2 =  \sqrt{\tau}  \| T_{00}^{[f/2]}\phi \|^2 \\
+i \sqrt{\ud^2-\tau}\;\cdot \;\Big\{   \sce{T_{00}^{[\neq f/2]}\phi}{\phi^{[\neq f/2]} }- \sce{\phi^{[\neq f/2]} }{T_{00}^{[\neq f/2]}\phi}\Big\}
\end{multline}
Then we divide 
\eqref{g3} with the given choice of $\lambda_1, \lambda_2$
by $-2 i \eps$ and employ  \eqref{awehncv} and let $\eps\to 0$ under the assumption that $\tau$ is not a pole of $T_{00}^{[f/2]}$.
Choosing $\phi\in\H(B,\H^{f/2}(F))$ or $\phi\in\H(B,\H^{k}(F))$ with $k\neq f/2$ now gives the claim.
\end{proof}

\subsubsection{Reparametrization}
For the remainder we consider only $0<|\lambda|<\tau_1=\min\{1/4,\nu_1\}$, where $\nu_1$ is the smallest positive eigenvalue of $\Delta_{1,0}$.
The preimage in $\Sigma_s$ under $\pi_s$  of this domain  lies in a double covering of $\Complex$.

For $\lambda\in\Complex\setminus \Reell^+$\index{$s$} let
$
 s=s_{\nu,k}(\lambda)=d_k-i\sqrt{\lambda-\nu-d_k^2},
$
so that $
  \lambda  =s(2d_k-s).
$
In particular in the asymptotic expansion \eqref{aympt1} for $\psi\in  \H^*(B,\H^k(F))$ we choose the parameter
$s=s_{0,k}$, so that
\begin{multline}\label{aymptneu}
 \Pi_0 E(s,\psi) = e^{(a_k-d_k+s) r}\psi+e^{(\ua+\ud-s) r}T_{00}(s,\psi)\\
+\sum_{\nu>0}e^{(\ua+i\sqrt{s(2d_k-s)-\ud^2})r}T_{0\nu}(s,\psi)
+\sum_{\gamma>0}du\wedge e^{(\ua+i\sqrt{s(2d_k-s)-\ud^2})r}S_{0\gamma}(s,\psi).
\end{multline}
Note that $\Re s > d_k$ corresponds to  $\Lambda\in \FP$.
With the notation from \eqref{notwu}
\begin{align*}
 s_{\nu,k}^+(\lambda_1,\lambda_2)=i(s_{\nu,k}(\lambda_1)+\overline{s_{\nu,k}(\lambda_2)}-2d_k),\quad
 s_{\nu,k}^-(\lambda_1,\lambda_2)=i(s_{\nu,k}(\lambda_1)-\overline{s_{\nu,k}(\lambda_2)}).
\end{align*}

Now \eqref{g4}  for $\phi, \psi\in \H^{*}(B,\H^k(F))$, $\hat s\neq \bar s$ and $ \hat s+\bar s\neq 2d_k$ becomes
\begin{multline} \label{g4s}
 (E^r(\hat s,\phi) ,\: E^r(s,\psi))  
=\: \frac{1}{\hat s+\overline s - 2d_k}e^{(\hat s+\overline s - 2d_k) r }\sce{\phi}{\psi} \\
+\frac{1}{\overline s-\hat s} \Big\{  e^{(\overline s-\hat s) r} \Sce{T_{0 0}(\hat s,\phi)}{\psi} 
-e^{(\hat s-\overline s)r} \Sce{\phi}{T_{00}(s,\psi)}\Big\}\\
   -i \sum_{\nu\ge 0; l} \frac{e^{i s_{\nu,l}^+ r }}{ s_{\nu,l}^+}    \sce{T_{0\nu}^{[l]}(\hat s,\phi)}{T_{0\nu}^{[l]}(s,\psi)}
-i \sum_{\gamma> 0; l} \frac{e^{i s_{\gamma,l}^+ r }}{ s_{\gamma,l}^+}    \sce{S_{0\gamma}^{[l]}(\hat s,\phi)}{S_{0\gamma}^{[l]}(s,\psi)}.
\end{multline}

Equations \eqref{devse}, \eqref{emitstern} become
\begin{eqnarray}
 dE(s,\psi)&=&(a_k-d_k+s) E(s, du\wedge \psi) \label{deformel}\\
E(s,\astm\psi)&=&\ast E(s,du\wedge\psi), \label{duformel}
\end{eqnarray}
and \eqref{Tbez} translates to
\begin{multline} 
(a_k-d_k+s) T_{00}(s,\astm\psi)= (a_k+d_k-s) \astm T_{00}^{[k]}(s,\psi)\\
+\sum_{\substack{l=0\\ l\neq k}}^f \big(a_l+i\sqrt{s(2d_k-s)-d_l^2}\,\big) \astm T_{00}^{[l]}(s,\psi).\label{sternswitch}
\end{multline}

\subsection[Poles of $E(s, \phi)$]{Poles of $\mathbf{E(s, \phi)}$}\label{epole}

Let $U$ be a neighbourhood of $0$ and $2d_k$ which is open in  $\Complex$, so that $s(2d_k-s)\in B_{t}(0)$ for  $t<\tau_1=\min\{1/4,\nu_1\}$.
\begin{satz}\label{polposition}
Let $\phi\in \H^{*}(B,\H^k(F))$. Let $\Re(s)\ge d_k$ and $s\in U$, i.e. $s$ lies in the same connected component of $U$ as the point $2d_k$.

For every $\eps>0$ there is a constant $C(\eps)$, so that $\|T_{0\nu}^{[l]}(s,\phi)\|<C(\eps)$ for $s\in U\cap\{\Re s\ge d_k, \Im s\ge \eps\}$. 

In particular poles of $E(s,\phi)$ in $U\cap\{\Re s\ge d_k\}$ must lie in the interval $(d_k,2d_k]$. The order of a pole is 1.
In addition, $T_{00}^{[f/2]}$ is holomorphic at $s=2d_k$.
\end{satz}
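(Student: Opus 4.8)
The plan is to read off all the assertions of Proposition \ref{polposition} from the Maa{\ss}--Selberg relations of Proposition \ref{maass-selb1}, in the bilinear form \eqref{g4s}, together with the self-adjointness and non-negativity of $\Delta_X$. They are three faces of a single positivity estimate, $\norm{E^r(s,\phi)}_{L^2(X)}^2\ge 0$, so the whole argument is built around controlling the sign of the right-hand side of \eqref{g4s}.

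First I would establish the uniform bound. Specialise \eqref{g4s} to $\hat s=s$, $\psi=\phi$; this is admissible on $\{\Re s>d_k,\ \Im s\ge\eps\}$, where $\hat s\neq\overline s$ and $\hat s+\overline s\neq 2d_k$ so no denominator degenerates, and where $E^r(s,\phi)\in L^2\Omega^*(X)$ by property \ref{egs3}). On this set each $s_{\nu,l}^+$ has strictly positive imaginary part, so every coefficient $-i\,e^{i s_{\nu,l}^+ r}/s_{\nu,l}^+$ in front of $\norm{T_{0\nu}^{[l]}(s,\phi)}^2$ and of $\norm{S_{0\gamma}^{[l]}(s,\phi)}^2$ is a negative real number, exponentially damped in $r$. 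Transferring these terms to the left and dropping $\norm{E^r}^2\ge 0$ bounds $\sum_{\nu,l}\norm{T_{0\nu}^{[l]}(s,\phi)}^2$ (and the $S$-sum) from above by the surviving leading terms, which involve only $\norm{\phi}^2$ and $\abs{\sce{T_{00}(s,\phi)}{\phi}}\le\norm{T_{00}(s,\phi)}\,\norm{\phi}$. Since the $\nu=0$ coefficients enter quadratically on the left but only linearly on the right, completing the square yields $\norm{T_{0\nu}^{[l]}(s,\phi)}\le C(\eps)$; the $\eps$-dependence is forced by the factors $1/\Im s$ and by $1/s_{0,f/2}^+$, which degenerate as $\Im s\to 0$. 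For $k\neq f/2$ the admissible region lies in $\Re s>d_k$ bounded away from the critical line, and this is clean; for $k=f/2$ one must also reach the segment $\Re s=0$ of the imaginary axis, where \eqref{g4s} degenerates because $\hat s+\overline s=2d_k$, and there I would invoke the real relation (second display of Proposition \ref{maass-selb1}) instead.

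The pole statement then follows from this bound and self-adjointness. Writing $\lambda=s(2d_k-s)$ one computes $\Im\lambda=2\,\Im s\,(d_k-\Re s)$, so for $\Re s>d_k$ and $\Im s\neq 0$ the value $\lambda$ is non-real; there the continued resolvent $R(\Lambda)$ is the genuine resolvent $(\Delta_X-\lambda)^{-1}$ of a self-adjoint operator and is holomorphic, so $E(s,\phi)$ has no pole. Hence every pole in $\{\Re s>d_k\}$ is real and lies in $(d_k,2d_k]$. Poles on the line $\Re s=d_k$ with $\Im s\ge\eps$---which arise only for $k=f/2$, where $U$ surrounds $s=0$---are excluded, for every $\eps>0$, by the bound just proved, since the poles of $E(s,\phi)$ are exactly those of its constant-term coefficients $T_{0\nu}^{[l]}$. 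For the order, at an interior real pole $s_0$ the map $s\mapsto\lambda$ is a local biholomorphism ($\lambda'(s_0)=2(d_k-s_0)\neq 0$) and the self-adjoint resolvent has a simple pole at the corresponding spectral value; equivalently, an order-$\ge 2$ pole of $T_{00}$ would produce an uncancellable singular term in \eqref{g4s}, contradicting $\norm{E^r}^2<\infty$.

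The genuine obstacle is the middle-degree endpoint $s=2d_k=0$ for $k=f/2$: there $\lambda=-s^2$ has a critical point at the branch point $\lambda=0$, the biholomorphism argument collapses, and one cannot a priori rule out a double pole. Here I would use the second Maa{\ss}--Selberg formula of Proposition \ref{maass-selb1}: as $\tau\downarrow 0$ the only potentially singular terms are $r\norm{T_{00}^{[f/2]}(\tau,\phi)}^2$ and $\tfrac{1}{2i\sqrt\tau}\{e^{2i\sqrt\tau r}\sce{T_{00}(\tau,\phi)}{\phi}-e^{-2i\sqrt\tau r}\sce{\phi}{T_{00}(\tau,\phi)}\}$, and the non-negativity and finiteness of $\norm{E^r(\tau,\phi)}^2$ force $T_{00}^{[f/2]}(\tau,\phi)$ to stay bounded; being meromorphic, it is then holomorphic at $0$. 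The functional equation \eqref{fktgle} with $\gamma_{0,f/2}^2=\mathrm{id}$ afterwards identifies $T_{00}^{[f/2]}(0)$ as an involution, reinforcing the regularity. I expect the careful bookkeeping of these degenerating factors in the limit $\tau\downarrow 0$ to be the main technical difficulty.
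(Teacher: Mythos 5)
Your overall strategy -- the diagonal Maa{\ss}--Selberg relation \eqref{g4s}, positivity of $\norm{E^r}^2$, and the mismatch between the quadratic left-hand side and the linear right-hand side -- is exactly the paper's, and your treatment of the uniform bound and of the exclusion of non-real poles is sound (your route to the latter, via $\Im\lambda=2\,\Im s\,(d_k-\Re s)\neq 0$ and holomorphy of the genuine resolvent off the real axis, is a legitimate and arguably cleaner alternative to the paper's asymptotic comparison). Two of your supporting arguments, however, do not close. First, for an interior real pole $s_0\in(d_k,2d_k)$ the value $\lambda_0=s_0(2d_k-s_0)\in(0,d_k^2)$ lies \emph{inside} the essential spectrum $[\tau_0,\infty)$ (here $\tau_0=0$ when $f$ is even), so the object having the pole is the meromorphic continuation $R(\Lambda)$ on weighted spaces, i.e.\ a resonance, and self-adjointness does not make it simple. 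Your fallback (``an order-$\ge 2$ pole would produce an uncancellable singular term'') is the right idea, but it must be carried out quantitatively: with pole order $n$ the left side of the inequality grows like $|s-s_0|^{-2n}$ while the right side is $O(|s-s_0|^{-n}/|\Im s|)=O(|s-s_0|^{-n-1})$ along $\Im s\asymp|s-s_0|$, whence $n\le 1$; one must also treat separately the pairs $(\nu,l)$ with $s_{\nu,l}^+(\tau,\tau)=0$, where the coefficient degenerates only like $|s-s_0|^{-1/2}$.

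The genuine gap is the regularity of $T_{00}^{[f/2]}$ at $s=2d_k$. For $d_k>0$ you do not address it at all, although the statement asserts it for every $k$; the paper's mechanism there is that $s_{0,f/2}^+=O(|s-2d_k|^{1/2})$, so the term $|s-2d_k|^{-1/2}\,\norm{T_{00}^{[f/2]}(s,\phi)}^2$ appears with a \emph{divergent positive coefficient on the quadratic side}, and comparison with the right-hand side $O(|s-2d_k|^{-2})$ forces $2q+\tfrac12\le 2$, i.e.\ $q=0$. For $d_k=0$ your proposed argument -- that non-negativity and finiteness of $\norm{E^r(\tau,\phi)}^2$ in the real-$\tau$ Maa{\ss}--Selberg formula force $T_{00}^{[f/2]}(\tau,\phi)$ to stay bounded -- does not work: that formula is an identity whose left-hand side is not a priori bounded as $\tau\downarrow 0$, and the potentially singular terms on the right ($r\,\norm{T_{00}^{[f/2]}}^2$, which is positive, and the derivative term, whose sign is indeterminate) can blow up consistently with $\norm{E^r}^2\to+\infty$, so positivity yields no contradiction. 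The paper instead stays in the complex $s$-plane and uses the same amplification mechanism: near $s=0$ one has $|s_{0,f/2}^+|\lesssim|s|$, so the inequality reads $\gamma_1|s|^{-1}\norm{T_{00}^{[f/2]}(s,\phi)}^2+\cdots\le c\,|\Im s|^{-1}\norm{T_{00}^{[f/2]}(s,\phi)}+c'|s|^{-1}$, and approaching $0$ along $\Im s\asymp|s|$ gives $1+2q\le 1+q$, hence $q=0$. Without this (or an equivalent) argument the last assertion of the proposition is unproved.
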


\begin{proof}
In \eqref{g4s} set $\hat s=s$, $\sigma=\Re s\ge d_k, \Im s\neq 0$ and $s\in U$. The left hand side of  \eqref{g4s} is non-negative for $\phi=\psi$. Because $\Re(s)\ge d_k$, i.e.
$\lambda=s(2d_k-s)\in \Complex\setminus \Reell^+$, we have
\[
 s_{\nu,l}^+=2 i \:\Im\sqrt{\lambda-\nu-d_l^2}\quad\text{where}\quad\Im\sqrt{\lambda-\nu-d_l^2}\ge 0.
\]
Thus
\begin{align*}
 \sum_{\nu\ge 0; l} &\frac{i e^{i s_{\nu,l}^+ r }}{ s_{\nu,l}^+} \|T_{0\nu}^{[l]}(s,\phi)\|^2 
+ \sum_{\gamma> 0; l} \frac{i e^{i s_{\gamma,l}^+ r }}{ s_{\gamma,l}^+} \|S_{0\gamma}^{[l]}(s,\phi)\|^2 \\
&\le \Big| \frac{e^{( s+\overline s - 2d_k))r }}{ s+\overline s - 2d_k}\Big| \|\phi\|^2 
+2 \Big|\frac{ e^{(\overline s- s) r}}{\overline s- s}\Big| \| T_{0 0}^{[k]}(s,\phi) \| \|\phi\| \nonumber \\
&=  \frac{e^{2(\sigma - d_k)r }}{ 2|\sigma - d_k|} \|\phi\|^2 
+\frac{ 1}{|\Im s|}\| T_{0 0}^{[k]}(s,\phi) \| \|\phi\|.
\end{align*}
Now let $\|\phi\|_{L^2(M,g^M)}=1$. After multiplication with $e^{-2(\sigma - d_k)r}$ we obtain an estimate of the form
\begin{equation} \label{fab1}
 \sum_{\nu\ge 0; l} \frac{i\,C_{\nu,l}}{s_{\nu,l}^+} \|T_{0\nu}^{[l]}(s,\phi)\|^2 
+\sum_{\gamma> 0; l} \frac{i\,C_{\gamma,l}}{s_{\gamma,l}^+} \|S_{0\gamma}^{[l]}(s,\phi)\|^2 
\le  \frac{1}{ 2|\sigma - d_k|}
+\frac{c}{|\Im s|}\| T_{00}^{[k]}(s,\phi) \| \\
\end{equation}
with non-negative numbers $c, C_{\nu,l}$, which depend on $r$, but remain bounded for $r\to\infty$.
Now the asymptotics of both sides of \eqref{fab1} in a neighbourhood of  $s_0$ will be compared.

We distinguish between four cases:
\begin{enumerate}[1)]
\item Let $\Re s_0\ge  d_k, \Im s_0\neq 0$. Then $(s_{\nu,l}^+)^{-1}$ is bounded for $s\to s_0$ and all $(\nu,l)$ and  \eqref{fab1} becomes
\begin{equation} \label{ba247}
 C \|T_{00}^{[k]}\|^2+\sum_{\nu>0} C_\nu \|T_{0\nu}\|^2
+\sum_{\gamma>0} C_\gamma \|S_{0\gamma}\|^2\le \frac{c}{|\Im s|} \|T_{00}^{[k]}\|.
\end{equation}
Let  $n$ be the order of a pole of $T_{00}^{[k]}(.,\phi)$ at $s_0$. Because of  \eqref{ba247} for  $s$ sufficiently close to $s_0$,
\begin{equation}
 \gamma_1 |s-s_0|^{-2n}\le \|T_{00}^{[k]}(s,\phi)\|^2 \le \frac{c}{|\Im s|} \|T_{00}^{[k]}(s,\phi)\| \le \frac{\gamma_2}{|\Im s|}  |s-s_0|^{-n}\label{fab2}
\end{equation}
for certain positive constants $\gamma_1,\gamma_2$.
If $s_0$ is not  real, then $\frac{c}{|\Im s|}$ remains bounded for $s\to s_0$. This implies
\[
 \gamma |s-s_0|^{-2n} \le |s-s_0|^{-n}\qquad \Longrightarrow\quad n\le 0.
\]
Thus $T_{00}^{[k]}(\cdot,\phi)$ is regular in $s_0$, and due to \eqref{ba247} all $T_{0\nu}(\cdot,\phi), S_{0\gamma}(\cdot,\phi)$ must be regular in $s_0$.
\item\label{itemoben} Now let $ \Im s_0=0$ with  $d_k<s_0<2 d_k$, in particular $k\neq f/2$.
In \eqref{fab1} we have to note that possibly
\[
s_{\nu,l}^+(\tau,\tau)=2\sqrt{\tau-\nu-d_l^2}=0
\]
for $\tau\defgleich s_0(2d_k-s_0)$, if $\nu+d_l^2\le d_k^2$.
Therefore we first choose $s_0$ so that $s_{\nu,l}^+(\tau,\tau)\neq 0$. Then \eqref{fab2} shows for $s$ sufficiently close to $s_0$
\[
 \gamma |s-s_0|^{-2n}\le  |s-s_0|^{-n-1},\;\gamma>0 \qquad\Longrightarrow \quad n\le 1,
\]
which means the order of a pole of  $T_{00}^{[k]}(\cdot,\phi)$ at $s_0$ can be at most 1.

If the left hand side of \eqref{ba247} has a pole of order $m$ in $s_0$, as above we conclude $2m\le n+1\le 2$.
So all  $T_{0\nu}$ have a singularity of order less or equal to 1 in $s_0$.

For a given $s_0$ in $(d_k,2d_k)$ choose $(\nu,l)$ so that $s_{\nu,l}^+(\tau,\tau)=0$. Then
\begin{eqnarray*}
 s_{\nu,l}^+ &=& 2 i \:\Im \sqrt{s(2d_k-s)-\nu-d_l^2}\\
&=& 2 i \:\Im  \sqrt{-(s-s_0)^2-2(s_0-d_k)(s-s_0)}
=O(|s-s_0|^{1/2})
\end{eqnarray*}

This means the order $q$ of a pole  of $T_{0\nu}^{[l]}(.,\phi)$ in $s_0$ satisfies $2q+1/2\le 1$, i.e. $q=0$. 
The corresponding terms in the asymptotic expansion are holomorphic. For terms corresponding to other $(\nu,l)$ again the maximal order of a pole in $s_0$ is $1$.
\item\label{itemdavor} Now we consider $s_0=2 d_k$ and $d_k>0$, i.e. $k\neq f/2$. 
\[
s_{0,f/2}^+=\sqrt{s(2d_k-s)}+\sqrt{\bar s(2d_k-\bar s)}=O(|s-2d_k|^{1/2})\quad\text{for}\quad s\to s_0.
\]
The term $(s_{\nu,l}^+)^{-1}$ remains bounded in the limit $s\to s_0$ for all other $(\nu,l)$.
Thus \eqref{fab1} takes the form
\begin{multline*}
 \gamma_1 |s-2d_k|^{-1/2}\|T_{00}^{[f/2]}(s,\phi)\|^2+ C \|T_{00}^{[k]}(s,\phi)\|^2\\
+\sum_{\nu>0} C_\nu \|T_{0\nu}(s,\phi)\|^2+\sum_{\gamma>0} C_\gamma \|S_{0\gamma}(s,\phi)\|^2
\le \frac{c}{|\Im s|} \|T_{00}^{[k]}(s,\phi)\|
\end{multline*}
Analoguous to \ref{itemoben}) we conclude that $T_{00}^{[k]}$ and $T_{0\nu}^{[l]}$  can have a pole of order at most one in $s_0$ and that
 $T_{00}^{[f/2]}$ is regular in $s_0$.
\item Finally let $s_0=0$ for $k=f/2$. 
Then $|s_{0,f/2}^+|<\gamma |s|$ for $s$ near $0$.
Here we have to consider the inequality
\begin{multline*}
 \gamma_1 |s|^{-1}\|T_{00}^{[f/2]}(s,\phi)\|^2+\sum_{\nu>0} C_\nu \|T_{0\nu}(s,\phi)\|^2
+\sum_{\gamma>0} C_\gamma \|S_{0\gamma}(s,\phi)\|^2 \\
\le \frac{c}{|\Im s|} \|T_{00}^{[f/2]}(s,\phi)\|+\frac{c'}{|s|}.
\end{multline*}
Again this implies that $T_{0\nu}$ and $S_{0\gamma}$ must be regular in $s_0$.
\end{enumerate}
\end{proof}

\begin{satz}\label{satzpolord}
The order of a pole of $E(\cdot ,\phi)$ in $s_0\in U$ is the maximum of the orders of poles of $T_{0\nu}^{[l]}(\cdot ,\phi)$ in $s_0$.
In particular $E(\cdot ,\phi)$ has a pole of order at most 1 in $2d_k$.
\end{satz}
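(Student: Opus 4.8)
The plan is to prove two opposite inequalities between $N\defgleich\operatorname{ord}_{s_0}E(\cdot,\phi)$ and $m\defgleich\max_{\nu,l}\operatorname{ord}_{s_0}T_{0\nu}^{[l]}(\cdot,\phi)$, and then to read off the ``in particular'' clause from Proposition \ref{polposition}. The inequality $m\le N$ I would treat as essentially formal: in the expansion \eqref{aymptneu} each scattering operator occurs as the coefficient of one of the mutually linearly independent exponential profiles in the constant term $\Pi_0E(s,\phi)$ along a cross section $\{r\}\times M$, while the single non-decaying incoming profile carries the $s$-independent coefficient $\phi$. Hence each $T_{0\nu}^{[l]}(\cdot,\phi)$ can be recovered from $\Pi_0E(\cdot,\phi)$ by a fixed continuous linear projection (integrate against the corresponding harmonic section over the fibre and isolate the exponential), and is therefore meromorphic with pole order at $s_0$ bounded by that of $E$.

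For the reverse inequality $N\le m$, which is the substantial one, I would argue by contradiction using the bilinear Maa{\ss}--Selberg identity \eqref{g4s}. Assume $N>m$. On the compact piece $X_r$ the cut-off eigenform $E^r(\cdot,\phi)$ from section \ref{kmsb} agrees with $E(\cdot,\phi)$, so it still has a pole of order exactly $N$ at $s_0$; its leading Laurent coefficient $E^r_{-N}$ is a nonzero element of $L^2\Omega^p(X)$, nonzero because it is an eigenform of $\Delta_X$ and hence cannot vanish on the open set $X_r$ by unique continuation. I would then read \eqref{g4s} with $\psi=\phi$ as an identity of meromorphic functions of $\hat s$ for a fixed generic $s$. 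The left-hand side is linear in $E^r(\hat s,\phi)$, and since $(E^r_{-N},E^r(s,\phi))\neq0$ for generic $s$ (this pairing blows up as $s\to s_0$, so it is not identically zero), it has a pole of order $N$ in $\hat s$ at $s_0$. Every term on the right-hand side, by contrast, depends on $\hat s$ only through a single factor $T_{00}(\hat s,\phi)$, $T_{0\nu}^{[l]}(\hat s,\phi)$ or $S_{0\gamma}^{[l]}(\hat s,\phi)$, the remaining factors being holomorphic near $\hat s=s_0$ once $s$ is chosen away from $\overline s=s_0$ and from the zeros of the $s_{\nu,l}^+$; hence the right-hand side has pole order at most $m$ in $\hat s$. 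This forces $N\le m$, the desired contradiction. The operators $S_{0\gamma}^{[l]}(\cdot,\phi)$ introduce no new poles: by the functional equation \eqref{deformel} they are the scattering coefficients of $E(\cdot,du\wedge\phi)$ and are controlled by the $T$'s, so they do not enlarge $m$.

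Combining the two inequalities gives $N=m$. For the final assertion I would invoke Proposition \ref{polposition}, which bounds the order of every $T_{0\nu}^{[l]}(\cdot,\phi)$ in $U\cap\{\Re s\ge d_k\}$ by $1$; thus $m\le1$ at $s_0=2d_k$, whence $\operatorname{ord}_{2d_k}E\le1$. I expect the main obstacle to be exactly the inequality $N\le m$: a priori $E(\cdot,\phi)$ could acquire a pole at an $L^2$ eigenvalue of $\Delta_X$ embedded in the continuous spectrum whose eigenform is perpendicular to the fibre-harmonic forms, and is therefore invisible in the constant term; it is precisely the rigidity of the Maa{\ss}--Selberg relation, which ties the norm of $E^r$ to the scattering data alone, that excludes this possibility. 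A smaller technical point to handle with care is the genericity of $s$ guaranteeing that the left-hand side of \eqref{g4s} really exhibits the full order-$N$ pole rather than a cancelled lower order.
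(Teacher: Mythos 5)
The proposal is correct and takes essentially the same route as the paper: the paper's own proof is the one-line remark that the claim follows from the Maa{\ss}--Selberg identity \eqref{g4s} together with the estimates in the proof of Proposition \ref{polposition}, and your argument is a careful unpacking of exactly those two ingredients (extracting the $T_{0\nu}^{[l]}$ linearly from the constant term for the inequality $m\le N$, and pairing against the leading Laurent coefficient in \eqref{g4s} for $N\le m$). Two cosmetic caveats: the right-hand side of \eqref{g4s} is not literally meromorphic in $\hat s$ at the branch points of the $s_{\nu,l}^+$, though your order-of-growth comparison survives there; and the fact that the $S_{0\gamma}^{[l]}$ do not exceed the maximal $T$-pole order is most cleanly read off from the positivity of their contribution in \eqref{fab1} rather than from the functional equation \eqref{deformel}.
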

\begin{proof}
This follows directly from \eqref{g4s} and the proof of Proposition \ref{polposition}.
\end{proof}

\begin{lem}\label{lemma3}
Let $\bar s$ be not a pole of  $E^r(.,\phi)$ and $s$  not a pole of $E^r(.,\psi)$.
Then
\[
 \Sce{T_{00}(\bar s,\phi)}{\psi}= \Sce{\phi}{T_{00}(s,\psi)}.
\]
\end{lem}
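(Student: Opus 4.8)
The plan is to extract the identity from the Maaß--Selberg relation \eqref{g4s} by letting its two spectral parameters tend to complex conjugate values, where the left-hand side stays finite but one term on the right develops a simple pole. Throughout I take $\phi,\psi\in\H^*(B,\H^k(F))$ and fix $s$ with $\Im s\neq0$ and $\Re s>d_k$, so that $s$ lies in the physical domain and $E^r(s,\psi)\in L^2\Omega^*(X)$; the general statement then follows by meromorphic continuation in $s$. Applying \eqref{g4s} with first parameter $\hat s$ and second parameter $s$ gives
\begin{multline*}
(E^r(\hat s,\phi),E^r(s,\psi)) = \frac{e^{(\hat s+\overline s-2d_k)r}}{\hat s+\overline s-2d_k}\sce{\phi}{\psi}\\
+\frac{1}{\overline s-\hat s}\Big\{e^{(\overline s-\hat s)r}\Sce{T_{00}(\hat s,\phi)}{\psi}-e^{(\hat s-\overline s)r}\Sce{\phi}{T_{00}(s,\psi)}\Big\}+D(\hat s),
\end{multline*}
where $D(\hat s)$ denotes the two sums over $(\nu,l)$ and $(\gamma,l)$ in \eqref{g4s}.

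Next I would examine the behaviour as $\hat s\to\overline s$. The left-hand side stays bounded: by hypothesis $\overline s$ is not a pole of $E^r(\cdot,\phi)$, so $E^r(\hat s,\phi)$ is holomorphic in $L^2\Omega^*(X)$ near $\hat s=\overline s$, and it is paired with the fixed element $E^r(s,\psi)$. On the right, the first term is regular, its denominator tending to $2(\overline s-d_k)\neq0$ because $\Re\overline s=\Re s>d_k$. The decisive point is that $D(\hat s)$ also stays bounded: its summands carry denominators $s_{\nu,l}^+$, and after the reparametrization one has $s_{0,k}^+=i(\hat s+\overline s-2d_k)$, so the resonant channel $(\nu,l)=(0,k)$ has denominator $\hat s+\overline s-2d_k$ rather than $\overline s-\hat s$ and is therefore regular in the limit; all remaining channels satisfy $s_{\nu,l}^+\to2\sqrt{\overline\lambda-\nu-d_l^2}\neq0$ in the regime $0<|\lambda|<\tau_1$ (with $\lambda=s(2d_k-s)$), and the entries $T_{0\nu}^{[l]}(\cdot,\phi)$, $S_{0\gamma}^{[l]}(\cdot,\phi)$ are regular at $\overline s$ by Proposition \ref{satzpolord}. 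Hence the middle term, with its factor $(\overline s-\hat s)^{-1}$, is the unique source of a pole at $\hat s=\overline s$.

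Finally I would multiply the displayed identity by $(\overline s-\hat s)$ and let $\hat s\to\overline s$. The left-hand side, the first term and $D(\hat s)$ are all bounded, so each is killed by the vanishing factor, while in the middle term the exponentials tend to $1$; this leaves
\[
0=\Sce{T_{00}(\overline s,\phi)}{\psi}-\Sce{\phi}{T_{00}(s,\psi)},
\]
which is the assertion for $\Im s\neq0$, $\Re s>d_k$, and hence for all admissible $s$ by continuation. The one step demanding care is precisely the boundedness bookkeeping in the second paragraph: one must verify that the diagonal channel $(0,k)$ combines with the first term (denominator $\hat s+\overline s-2d_k$) instead of with the singular middle term, so that no second pole at $\hat s=\overline s$ appears to spoil the limit.
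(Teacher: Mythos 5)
Your proof is correct and follows essentially the same route as the paper: the paper substitutes $\hat s=\bar s$ directly into \eqref{g3} (whose left-hand side carries the vanishing factor $\overline\lambda_2-\lambda_1$), which is exactly your step of multiplying \eqref{g4s} by $(\overline s-\hat s)$ and letting $\hat s\to\overline s$, with the same use of Proposition \ref{satzpolord} to control the remaining coefficients. Your extra bookkeeping confirming that only the middle term is resonant is sound and merely makes explicit what the paper leaves implicit.
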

\begin{proof}
Under the assumption that neither $\bar s$ is a pole of $E^r(.,\phi)$ nor $s$ is  a pole of $E^r(.,\psi)$, from \eqref{g3} (or \eqref{g4s}) we see for $\hat s=\bar s$
\[
 0= 2(\bar s-2d_k) \{  \Sce{T_{00}(\bar s,\phi)}{\psi} - \Sce{\phi}{T_{00}(s,\psi)}\}.
\]
Because of Proposition \ref{satzpolord} the $T_{00}$ are holomorphic in $\bar s$ and $s$, and the claim follows.
\end{proof}

\subsection{Residues}
We have seen that poles in a neighbourhood $U$ of $2d_k$ can lie only in $U\cap(d_k,2d_k]$, and can only have order one. Let $s_0\in U\cap(d_k,2d_k]$ and  $\tau= s_0(2d_k-s_0)$.

First we choose $s_0$ such that $s_{\nu,l}^+(\tau,\tau)\neq 0$ for all $l, \nu$.
After multiplication of \eqref{g4s} with $(\hat s-s_0)(\overline s-s_0)$ we first let $s\to s_0$ and then $\hat s\to s_0$.
\begin{align}
&(\res_{s_0} E^r(\cdot,\phi),\res_{s_0} E^r(\cdot,\psi))\nonumber\\
&= \Sce{\phi}{\res_{s_0} T_{00}(\cdot,\psi)}
  -\sum_{\nu\ge 0; l} \frac{e^{-2r \sqrt{\nu+d_l^2-\tau} }}{ 2\sqrt{\nu+d_l^2-\tau}}\sce{\res_{s_0} T_{0\nu}^{[l]}(\hat s,\phi)}{\res_{s_0} T_{0\nu}^{[l]}(s,\psi)} \nonumber\\
&\qquad-\sum_{\gamma> 0; l} \frac{e^{-2r \sqrt{\gamma+d_l^2-\tau} }}{ 2\sqrt{\gamma+d_l^2-\tau}}\sce{\res_{s_0} S_{0\gamma}^{[l]}(\hat s,\phi)}{\res_{s_0} S_{0\gamma}^{[l]}(s,\psi)}
\end{align}
Both sums converge to $0$ for $r\to \infty$.

Now let $s_0=2d_k$, that is $\tau=0$. Then we have
\[
s_{\nu,l}^+(\tau,\tau)=0\iff (\nu=0 \land  l=f/2).
\]
First we consider $d_k>0$.
\begin{align*}
  & \lim_{\hat s\to 2 d_k} \lim_{s\to 2d_k}\frac{e^{i s_{0,f/2}^+(\hat s, s) r }}{ s_{0,f/2}^+(\hat s, s)}\sce{(\hat s-2d_k)T_{00}^{[f/2]}(\hat s,\phi)}{(s-2d_k) T_{00}^{[f/2]}(s,\psi)}\\
=& \lim_{\hat s\to 2 d_k}\frac{e^{i \sqrt{\hat s(2d_k-\hat s)} r }}{ \sqrt{\hat s(2d_k-\hat s)}}\sce{(\hat s-2d_k)T_{00}^{[f/2]}(\hat s,\phi)}{\res_{2d_k} T_{00}^{[f/2]}(\psi)}=0,
\end{align*}
where in the last line we used that $T_{00}^{[f/2]}$ is holomorphic at $2d_k$.

Analogous for $d_k=0$, i.e. $\psi\in \H^{*}(B,\H^{f/2}(F))$:
\begin{align*}
  & \lim_{\hat s\to 0} \lim_{s\to0}\frac{e^{i s_{0,f/2}^+(\hat s, s) r }}{ s_{0,f/2}^+(\hat s, s)}\sce{\hat s T_{00}^{[f/2]}(\hat s,\phi)}{s T_{00}^{[f/2]}(s,\psi)}\\
=& \lim_{\hat s\to 0}\frac{e^{\hat s r }}{\hat s}\sce{\hat s T_{00}^{[f/2]}(\hat s,\phi)}{\res_{0} T_{00}^{[f/2]}(\psi)}=0.
\end{align*}
By interchanging the limits we  arrive at
\begin{satz}\label{residuen}
Let $\H^{*}(B,\H^{k}(F))$. Let $s_0=2d_k$ or $s_0\in U \cap (d_k, 2 d_k)$, so that $s_{\nu,l}^+(\tau,\tau)\neq 0$ for all $l, \nu$. Then
 $\res_{s_0} E(\cdot,\phi) \in L^2\Omega^*(X)$ and
 \[
 \Scr{\res_{s_0} E(\cdot,\phi)}{\res_{s_0} E(\cdot,\psi)}
= \Sce{\phi}{\res_{s_0} T_{00}(\cdot,\psi)}= \Sce{\res_{s_0} T_{00}(\cdot,\phi)}{\psi}.
 \]
\end{satz}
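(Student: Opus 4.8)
The statement is the culmination of the residue computation carried out immediately above, and the plan is to package that computation and pass to the limit $r\to\infty$. I would separate the two assertions: first that $\res_{s_0}E(\cdot,\phi)$ is square integrable, and then the inner-product identity, which I would extract from the reparametrized Maa{\ss}--Selberg relation \eqref{g4s}.

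For the $L^2$-claim the cleanest route avoids analyzing the expansion \eqref{aymptneu} term by term. On the end, $e_{-,0}(s,\phi)=e^{(a_k-d_k+s)u}\phi$ is \emph{holomorphic} in $s$, so $\chi e_{-,0}(\cdot,\phi)$ contributes no residue and $\res_{s_0}E(\cdot,\phi)=\res_{s_0}\bigl(E-\chi e_{-,0}\bigr)(\cdot,\phi)$. By properties 1) and \ref{egs3}) of the generalized eigenforms, $E-\chi e_{-,0}$ is an $L^2\Omega^*(X)$-valued meromorphic family for $\Re s>d_k$, and since by Proposition \ref{satzpolord} its pole at $s_0$ is simple, the residue lies in $L^2\Omega^*(X)$. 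Restricting to the cusp and using the orthogonal splitting into fiber-harmonic forms and their complement, both $\res_{s_0}\Pi_0(E|_{Z})$ and $\res_{s_0}\Pi_\perp(E|_Z)$ are then in $L^2(Z)$; in particular the tail $\res_{s_0}\Pi_0(E|_{Z_r})$ has $L^2$-norm tending to $0$ as $r\to\infty$, so that $\res_{s_0}E^r\to\res_{s_0}E$ in $L^2$.

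For the formula I would start from \eqref{g4s}, multiply by $(\hat s-s_0)(\overline s-s_0)$, and use the sesquilinearity of $\Scr{\cdot}{\cdot}$ together with the simplicity of the poles: letting first $s\to s_0$ and then $\hat s\to s_0$ turns the left-hand side into $\Scr{\res_{s_0}E^r(\cdot,\phi)}{\res_{s_0}E^r(\cdot,\psi)}$, collapses the boundary terms of the first two lines to $\Sce{\phi}{\res_{s_0}T_{00}(\cdot,\psi)}$, and leaves the two $r$-dependent scattering sums displayed before the proposition. When $s_0\in(d_k,2d_k)$ with all $s_{\nu,l}^+(\tau,\tau)\neq 0$ every factor $1/s_{\nu,l}^+$ stays bounded and the limits are taken termwise. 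When $s_0=2d_k$ (including the middle-degree case $d_k=0$, $s_0=0$) the single summand with $\nu=0$, $l=f/2$ has $s_{0,f/2}^+(\tau,\tau)=0$ and is the genuinely delicate point: I would treat it exactly as in the two auxiliary limit computations above, using that $T_{00}^{[f/2]}$ is holomorphic at $s_0$ (Proposition \ref{polposition}) to show that $(\hat s-s_0)(\overline s-s_0)\,e^{i s_{0,f/2}^+ r}/s_{0,f/2}^+$ times the corresponding inner product tends to $0$.

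Finally I would let $r\to\infty$ in the resulting identity. The left-hand side converges to $\Scr{\res_{s_0}E(\cdot,\phi)}{\res_{s_0}E(\cdot,\psi)}$ by the $L^2$-convergence established above, while each remaining sum carries a factor $e^{-2r\sqrt{\nu+d_l^2-\tau}}$ or $e^{-2r\sqrt{\gamma+d_l^2-\tau}}$; the genuinely decaying terms vanish, and the coefficients of any non-decaying (oscillatory) contribution must already be zero since $\res_{s_0}\Pi_0(E|_Z)\in L^2(Z)$. This yields $\Scr{\res_{s_0}E(\cdot,\phi)}{\res_{s_0}E(\cdot,\psi)}=\Sce{\phi}{\res_{s_0}T_{00}(\cdot,\psi)}$, and the second equality follows from the conjugate symmetry $\Scr{a}{b}=\overline{\Scr{b}{a}}$ by interchanging $\phi$ and $\psi$ (equivalently, from Lemma \ref{lemma3} read at the level of residues). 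The main obstacle is thus the degenerate term $s_{0,f/2}^+(\tau,\tau)=0$, whose vanishing contribution rests entirely on the regularity of $T_{00}^{[f/2]}$ at $s_0$.
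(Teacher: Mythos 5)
Your derivation of the inner--product identity is exactly the paper's: multiply \eqref{g4s} by $(\hat s-s_0)(\overline s-s_0)$, take the iterated limits $s\to s_0$, $\hat s\to s_0$, treat the degenerate summand $(\nu,l)=(0,f/2)$ at $s_0=2d_k$ via the regularity of $T_{00}^{[f/2]}$ there, and then let $r\to\infty$. The genuine problem is your separate, upfront argument for $\res_{s_0}E(\cdot,\phi)\in L^2\Omega^*(X)$, on which you then lean to discard the non-decaying contributions for $s_0\in(d_k,2d_k)$; as stated that argument rests on a false premise, and feeding its conclusion back in to kill the oscillatory terms makes this part of the proof circular.

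Concretely, property \ref{egs3}) gives $E-\chi e_{-,0}\in L^2$ only for $\Lambda\in\FP$, and in the $s$-coordinate $\FP$ is $\{\Re s>d_k\}$ \emph{minus} the real segment $(d_k,2d_k]$ on which $s_0$ lies. On that segment $E-\chi e_{-,0}$ is in general \emph{not} square integrable: the constant term contains the summand $e^{(a_{f/2}+i\sqrt{s(2d_k-s)})u}\,T_{00}^{[f/2]}(s,\phi)$, which for $\tau=s_0(2d_k-s_0)>0$ is purely oscillatory in $u$, and whose $L^2(Z_r)$-norm blows up like $(\Im\sqrt{\lambda})^{-1/2}$ as $s$ approaches the segment. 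Hence ``residue of an $L^2$-valued meromorphic family with a simple pole'' is not available: the membership $\res_{s_0}E\in L^2$ is \emph{equivalent} to $\res_{s_0}T_{00}^{[f/2]}=0$, and that vanishing is an output of the Maa{\ss}--Selberg computation, not a free input. Two ways to close the gap within the paper's framework: (i) note that $r\mapsto\|\res_{s_0}E^r\|^2$ is non-decreasing, since for $r'>r$ the increment $\Pi_0(E|_{Z_r\setminus Z_{r'}})$ is pointwise orthogonal to $E^{r'}$; a non-decreasing quantity cannot equal convergent terms plus a non-trivial purely oscillatory term $e^{-2ir\sqrt{\tau}}\,\|\res_{s_0}T_{00}^{[f/2]}(\cdot,\phi)\|^2/(2i\sqrt{\tau})$, which forces $\res_{s_0}T_{00}^{[f/2]}=0$; or (ii) observe that in case 2) of the proof of Proposition \ref{polposition} the coefficient $i/s_{0,f/2}^+\sim 1/(2\,\Im\sqrt{\lambda})$ sits on the \emph{left} of \eqref{fab1} and blows up like $1/|\Im s|$, so the same comparison of orders that bounds $T_{00}^{[k]}$ already yields the regularity of $T_{00}^{[f/2]}$ at every admissible $s_0\in(d_k,2d_k)$. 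With $\res_{s_0}T_{00}^{[f/2]}=0$ in hand, the oscillatory summand drops out, every remaining exponential decays, and both the identity and the $L^2$-claim follow by monotone convergence of $\|\res_{s_0}E^r\|^2$ --- which is the order of deduction the paper (tersely) follows.
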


The residues at $2d_k$ are of special interest. From now on we will use the notation\index{$\tC(\phi)$}\index{$\tE(\phi)$} 
\[
\tC(\phi)\defgleich\res_{2d_k} T_{00}(.,\phi),\quad
{\tE}(\phi)\defgleich\res_{2d_k} E(.,\phi)
\]
and 
$\H^{*,k}(M)\defgleich \H^{*}(B,\H^{k}(F)).$

An immediate consequence of Proposition \ref{residuen} is
\begin{cor}\label{zerlres}
\begin{enumerate}[a)]
 \item  ${\tC}^{[k]}=({\tC}^{[k]})^*, \quad {\tC}^{[k]}\ge 0$ and $\H^{*,k}(M)$ splits into the orthogonal direct sum
\[
 \H^{*,k}(M)=\ker {\tC}^{[k]}\oplus \bild {\tC}^{[k]}.
\]
\item For $\phi\in\H^{*,k}(M)$,
\[
E(s,\phi) \text{ is holomorphic at } s=2d_k \iff \phi \in \ker {\tC}^{[k]} \iff \phi \in \ker {\tC}
\]
\item For $\phi\in \H^{*,k}(M)$ the residues $\tE(\phi)$ and $\widetilde  E(du\wedge \phi)$ are in $L^2\Omega^*(X)$.
\end{enumerate}
\end{cor}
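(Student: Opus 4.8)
The plan is to read off all three assertions from the two-sided inner-product identity of Proposition \ref{residuen}, specialized to $s_0=2d_k$, using in addition the order-one pole bound of Proposition \ref{satzpolord} and the finite-dimensionality of $\H^{*,k}(M)$ (which holds since $\Delta_{1,0}$ is elliptic on the compact base $B$ by Lemma \ref{delta10ell}). For $\phi,\psi\in\H^{*,k}(M)$ that identity reads
\[
\Scr{\tE(\phi)}{\tE(\psi)}=\Sce{\phi}{\tC(\psi)}=\Sce{\tC(\phi)}{\psi},
\]
and since $\phi,\psi$ have fibre degree $k$ only the component $\tC^{[k]}$ contributes to the pairings on $M$, so the same holds with $\tC^{[k]}$ in place of $\tC$.

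For part a) the equality $\Sce{\tC^{[k]}(\phi)}{\psi}=\Sce{\phi}{\tC^{[k]}(\psi)}$ for all $\phi,\psi$ is exactly self-adjointness, and putting $\psi=\phi$ gives $\Sce{\tC^{[k]}(\phi)}{\phi}=\norm{\tE(\phi)}^2\ge0$, hence $\tC^{[k]}\ge0$. As $\tC^{[k]}$ is a self-adjoint endomorphism of the finite-dimensional Hilbert space $\H^{*,k}(M)$, one has $\ker\tC^{[k]}=(\bild\tC^{[k]})^\perp$, which yields the asserted orthogonal splitting $\H^{*,k}(M)=\ker\tC^{[k]}\oplus\bild\tC^{[k]}$.

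For part b) I would argue by equivalences. By Proposition \ref{satzpolord} the pole of $E(\cdot,\phi)$ at $2d_k$ has order at most one, so $E(\cdot,\phi)$ is holomorphic at $2d_k$ if and only if $\tE(\phi)=0$. From $\norm{\tE(\phi)}^2=\Sce{\tC^{[k]}(\phi)}{\phi}$ and $\tC^{[k]}\ge0$ this vanishes if and only if $\tC^{[k]}(\phi)=0$, i.e.\ $\phi\in\ker\tC^{[k]}$. The inclusion $\ker\tC\subseteq\ker\tC^{[k]}$ is trivial; conversely, if $\phi\in\ker\tC^{[k]}$ then $E(\cdot,\phi)$ is holomorphic at $2d_k$, so by Proposition \ref{satzpolord} every coefficient $T_{0\nu}^{[l]}(\cdot,\phi)$ is holomorphic there, whence each $\tC^{[l]}(\phi)=\res_{2d_k}T_{00}^{[l]}(\cdot,\phi)=0$ and thus $\tC(\phi)=0$.

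For part c), $\tE(\phi)=\res_{2d_k}E(\cdot,\phi)\in L^2\Omega^*(X)$ is precisely the $L^2$-statement of Proposition \ref{residuen} at $s_0=2d_k$. For $du\wedge\phi$ I would use the functional equation \eqref{duformel}: taking residues at $s=2d_k$ in $E(s,\astm\phi)=\ast E(s,du\wedge\phi)$ gives $\tE(\astm\phi)=\ast\,\tE(du\wedge\phi)$, and since $\astm\phi$ is fibre-harmonic of fibre degree $f-k$ with $d_{f-k}=d_k$, Proposition \ref{residuen} yields $\tE(\astm\phi)\in L^2$; as $\ast$ is an $L^2$-isometry, $\tE(du\wedge\phi)\in L^2$ follows. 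The only step that is not purely formal is the inclusion $\ker\tC^{[k]}\subseteq\ker\tC$ in part b), where one must exclude poles of the remaining coefficients $T_{0\nu}^{[l]}$; this is where Proposition \ref{satzpolord} does the real work, the rest being linear algebra and direct appeals to Proposition \ref{residuen}.
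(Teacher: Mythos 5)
Your proposal is correct and follows essentially the same route as the paper: part a) and the positivity/self-adjointness are read off from the inner-product identity of Proposition \ref{residuen}, part b) combines this with the order-one pole bound and the "maximum of pole orders" statement of Proposition \ref{satzpolord}, and part c) uses Proposition \ref{residuen} together with the functional equation $E(s,\astm\psi)=\ast E(s,du\wedge\psi)$. You have merely supplied the routine details (finite-dimensionality of $\H^{*,k}(M)$, the reduction of $\tC$ to $\tC^{[k]}$ in the pairing, $d_{f-k}=d_k$) that the paper leaves implicit.
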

\begin{proof}
a) is immediate from Proposition \ref{residuen}. For b), let $\phi \in \ker {\tC}^{[k]}$. Proposition \ref{residuen} shows
\[
\| {\tE}(\phi) \|^2=\sce{\widetilde{C}^{[k]}(\phi)}{\phi}=0 \Rightarrow {\tE}(\phi)=0.
\]
If conversely $ {\tE}(\phi)=0$, from Proposition \ref{satzpolord} we conclude that  $T_{00}(.,\phi)$ is holomorphic at $2d_k$.
Finally c)  follows from Proposition \ref{residuen} and  $E(s,\astm\psi)=\ast E(s,du\wedge\psi)$.
\end{proof}

%
%
%
%
%

In particular the residues are closed, because
\[
 \|d \tE\|^2+ \|\delta \tE\|^2=\scr{\Delta \tE}{\tE} =0.
\]
This gives a map $\tE\mapsto [\tE]$ from a subspace of $\H_{(2)}^p(X)$ to $H^p(X)$.

\section{A Hodge--type Theorem}

\subsection{Harmonic representatives}
Using generalized Eigenforms, to each class in $H^p(M)$ a harmonic representative of a class in  $H^p(X)$ will be associated.
Let $\phi\in \H^{*,k}(M)$ be a representative of a class in $H^p(M)$.

First we consider the case $k\neq \frac{f}{2}$.
\begin{satz}\label{satz47}
Let $k>f/2$ and $\phi\in \H^{*,k}(M)$.
Then $E(\cdot,\phi)$ is holomorphic at $s=2d_k$.
$E(2d_k,\phi)$ is closed if and only if ${\tC}^{[f-k]}(\astm\phi)=0$.

Let $k<f/2$ and $\phi\in \H^{*,k}(M)$. Then  $E(\cdot,\phi)$ is holomorphic at $s=2d_k$ if and only if $\phi\in\ker {\tC}^{[k]}$.
\end{satz}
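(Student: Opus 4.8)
The plan is to reduce each of the three assertions to the functional equations \eqref{deformel}, \eqref{duformel} together with the residue analysis of Corollary \ref{zerlres} and the simple-pole bound of Proposition \ref{satzpolord}. The statement for $k<f/2$ requires no extra work: it is precisely Corollary \ref{zerlres} b), which asserts for every $\phi\in\H^{*,k}(M)$ that $E(\cdot,\phi)$ is holomorphic at $s=2d_k$ if and only if $\phi\in\ker{\tC}^{[k]}$, with no restriction on $k$.

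For $k>f/2$ the guiding idea is that $\astm\phi\in\H^{*,f-k}(M)$ lies in the range $f-k<f/2$ already understood, and that $d_{f-k}=d_k$, so the dual form is governed by the same spectral value $2d_k$. To obtain holomorphicity I would feed $\psi=\astm\phi$ into the functional equations. Since $a_{f-k}=d_{f-k}=d_k$, equation \eqref{deformel} reads $dE(s,\astm\phi)=s\,E(s,du\wedge\astm\phi)$, while \eqref{duformel} together with $\astm\astm=\pm\mathrm{id}$ gives $E(s,\phi)=\pm\ast E(s,du\wedge\astm\phi)$. Now $E(\cdot,\astm\phi)$ has at most a simple pole at $2d_k$ by Proposition \ref{satzpolord}, and by Corollary \ref{zerlres} c) its residue is an $L^2$-harmonic, hence closed, form; therefore $dE(\cdot,\astm\phi)$ is in fact holomorphic at $2d_k$. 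Dividing by $s$, which is regular and nonzero there because $d_k>0$, shows $E(\cdot,du\wedge\astm\phi)$ and with it $E(\cdot,\phi)$ to be holomorphic at $s=2d_k$.

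For the closedness criterion I would instead apply \eqref{deformel} directly to $\psi=\phi$, where $a_k=-d_k$ makes the coefficient degenerate to $dE(s,\phi)=(s-2d_k)\,E(s,du\wedge\phi)$. As $E(\cdot,\phi)$ is now known to be holomorphic at $2d_k$, evaluation at $s=2d_k$ yields $dE(2d_k,\phi)=\res_{s=2d_k}E(\cdot,du\wedge\phi)$, the form $E(\cdot,du\wedge\phi)$ again having at most a simple pole by \eqref{duformel} and Proposition \ref{satzpolord}. Using \eqref{duformel} once more, this residue equals $\ast^{-1}\tE(\astm\phi)$, which by Corollary \ref{zerlres} vanishes exactly when $\astm\phi\in\ker{\tC}^{[f-k]}$; hence $E(2d_k,\phi)$ is closed if and only if ${\tC}^{[f-k]}(\astm\phi)=0$.

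I expect the holomorphicity step for $k>f/2$ to be the crux. The delicate point is that even though $E(\cdot,\astm\phi)$ itself may have a pole, the exterior derivative annihilates its (closed, $L^2$-harmonic) residue, so $dE(\cdot,\astm\phi)$ is regular, and the prefactor $s$ in \eqref{deformel} is invertible at $2d_k$; this ``cancellation'' is what forces $E(\cdot,\phi)$ to be regular despite the degeneracy of its own coefficient $a_k-d_k+s$ at $2d_k$. The only remaining bookkeeping is the sign in $\astm\astm=\pm\mathrm{id}$, which affects neither holomorphicity nor closedness.
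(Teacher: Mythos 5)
Your proposal is correct, and the case $k<f/2$ and the closedness criterion are handled exactly as in the paper (via Corollary \ref{zerlres}~b) and the degenerate form $dE(s,\phi)=(s-2d_k)E(s,du\wedge\phi)$ of \eqref{deformel}, respectively). The one place where you genuinely diverge is the holomorphicity of $E(\cdot,\phi)$ at $2d_k$ for $k>f/2$: the paper works at the level of the scattering coefficients, isolating the fiber-degree-$(f-k)$ component of \eqref{sternswitch} to get $(2d_k-s)\,T_{00}^{[f-k]}(s,\astm\phi)=s\,\astm T_{00}^{[k]}(s,\phi)$, so that the prefactor $(2d_k-s)$ lowers the pole order by one and Proposition \ref{satzpolord} forces $T_{00}^{[k]}(\cdot,\phi)$, hence $E(\cdot,\phi)$, to be regular. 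You instead work at the level of the eigenforms themselves, chaining \eqref{deformel} and \eqref{duformel} into $dE(s,\astm\phi)=s\,E(s,du\wedge\astm\phi)=\pm s\ast^{-1}E(s,\phi)$ and using that the (at most simple) pole of $E(\cdot,\astm\phi)$ is annihilated by $d$ because its residue is a closed $L^2$-harmonic form. The two mechanisms are equivalent in substance --- your identity is the integrated version of \eqref{reszue} --- but your route replaces the pole-order bookkeeping for $T_{00}^{[f-k]}$ by the geometric input $d\tE(\astm\phi)=0$ (which requires Corollary \ref{zerlres}~c) plus completeness of $X$, both available at this point), while the paper's version needs only the order bound of Proposition \ref{satzpolord}. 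Your version has the small advantage of making transparent \emph{why} the degeneracy of the coefficient at $2d_k$ is harmless; the paper's is slightly more self-contained. No gaps.
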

\begin{proof}
Let $\phi\in \H^{*,k}(M)$.
For $k>f/2$, that is $a_k<0$,   in \eqref{sternswitch} we consider only the contribution coming from fiber degree $f-k$:
\begin{equation}
(2d_k-s) T_{00}^{[f-k]}(s,\astm\phi) =  s \astm T_{00}^{[k]}(s,\phi). \label{reszue}
\end{equation}
So if $T_{00}^{[f-k]}(s,\astm\phi)$ at $2d_k$  has a singularity of order $n$, then 
$T_{00}^{[k]}(s,\phi)$ has a singularity of order $n-1$ there. But the order of a pole at $2d_k$ is at most $1$ because of Proposition \ref{satzpolord}, so that
$T_{00}^{[k]}(s,\psi)$ must be holomorphic  at $s=2d_k$.
From Corollary \ref{zerlres} we conclude that $E(s,\phi)$ is holomorphic at $2d_k$. Equations \eqref{deformel} and \eqref{duformel} give
 \begin{align*}
  d E(s,\phi) &= (a_k-d_k+s) E(s,du\wedge \phi)\\
 &= (-2d_k+s) E(s,du\wedge \phi) = \pm(-2d_k+s) \ast E(s,\astm\phi) .
 \end{align*}
Thus $E(2d_k,\phi)$ is closed if and only if  $E(.,\astm\phi) $ is holomorphic at $2d_k$. From Corollary
\ref{zerlres} we know
$
 \H^{*,f-k}(M)=\ker {\tC_{2d_k}}^{f-k}\oplus \bild{\tC_{2d_k}}^{f-k}
$
and the remaining statements follow.
\end{proof}

\begin{satz}\label{satz48}
 $E(s,du\wedge \phi)$ is holomorphic and exact in $s=2d_k$ for  $k<f/2$.
 $\widetilde{E}(du\wedge \phi)=dE(2d_k,\phi)$ for $k>f/2$.

In particular both $E(2d_k, du\wedge\phi)$ (if existent) and $ \widetilde{E}(du\wedge \phi)$ are zero in the de~Rham cohomology of $X$ for all $\phi\in \H^{*,k}(M)$ with $k\neq f/2$.
\end{satz}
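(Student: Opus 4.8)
The plan is to drive everything from the first reparametrized functional equation, \eqref{deformel}, namely $dE(s,\phi)=(a_k-d_k+s)\,E(s,du\wedge\phi)$, and to read off the behaviour at $s=2d_k$ from the prefactor. The whole argument hinges on the elementary observation that $a_k-d_k+s$ behaves very differently in the two regimes: for $k<f/2$ one has $d_k=a_k$, so $a_k-d_k+s=s$, which equals $2d_k\neq0$ at the point of interest; for $k>f/2$ one has $d_k=-a_k$, so $a_k-d_k+s=s-2d_k$, which vanishes to first order at $s=2d_k$. Thus in one case the prefactor is harmless and in the other it produces exactly the simple zero needed to turn a residue into a differential.

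First I would treat $k<f/2$. Here \eqref{deformel} reads $E(s,du\wedge\phi)=s^{-1}dE(s,\phi)$, valid for $s$ near $2d_k$ away from poles. I would Laurent-expand $E(\cdot,\phi)$ at $2d_k$; by Proposition \ref{satzpolord} any pole is simple, so $E(s,\phi)=(s-2d_k)^{-1}\tE(\phi)+E_0(\phi)+O(s-2d_k)$ with $E_0(\phi)\in\Omega^p(X)$ smooth. The key input is that the residue $\tE(\phi)$ is closed: it lies in $L^2\Omega^p(X)$ by Corollary \ref{zerlres}c) and is harmonic, so $\|d\tE(\phi)\|^2+\|\delta\tE(\phi)\|^2=\scr{\Delta\tE(\phi)}{\tE(\phi)}=0$ forces $d\tE(\phi)=0$. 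Consequently $dE(s,\phi)=dE_0(\phi)+O(s-2d_k)$ is already holomorphic at $2d_k$, and dividing by $s$ (nonzero there) shows that $E(\cdot,du\wedge\phi)$ is holomorphic at $2d_k$ with value $E(2d_k,du\wedge\phi)=\tfrac{1}{2d_k}dE_0(\phi)=d\big(\tfrac{1}{2d_k}E_0(\phi)\big)$, hence exact; and since $E(\cdot,du\wedge\phi)$ has no pole its residue $\widetilde{E}(du\wedge\phi)$ vanishes. (Holomorphy can alternatively be obtained from \eqref{duformel}: up to the fixed star operator $E(s,du\wedge\phi)$ equals $E(s,\astm\phi)$ with $\astm\phi$ of fibre degree $f-k>f/2$, so it is holomorphic at $2d_k=2d_{f-k}$ by the $k>f/2$ case of Proposition \ref{satz47}.)

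Next I would treat $k>f/2$. Now $E(\cdot,\phi)$ is holomorphic at $2d_k$ by Proposition \ref{satz47}, while $E(\cdot,du\wedge\phi)$ may have a pole, of order at most $1$ by Proposition \ref{satzpolord}. Rewriting \eqref{deformel} as $dE(s,\phi)=(s-2d_k)\,E(s,du\wedge\phi)$ and matching the simple zero of the prefactor against the at-most-simple pole, I let $s\to2d_k$: the left side tends to $dE(2d_k,\phi)$, the right side extracts precisely the residue, and so $\widetilde{E}(du\wedge\phi)=dE(2d_k,\phi)$, which is manifestly exact. If moreover no pole is present (so that $E(2d_k,du\wedge\phi)$ exists), then $dE(2d_k,\phi)=0$ and differentiating the identity in $s$ at $2d_k$ gives $E(2d_k,du\wedge\phi)=d\big(\partial_s E(2d_k,\phi)\big)$, again exact. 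In every case the objects in question are differentials of smooth forms on $X$, hence trivial in $H^{p+1}(X)$, which yields the final ``in particular'' statement.

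The determinations of $a_k-d_k+s$ at $2d_k$ are routine. The one genuinely delicate point, and the step I expect to be the main obstacle, is the order bookkeeping around $s=2d_k$: for $k<f/2$ one must know that the pole of $E(\cdot,\phi)$ is simple and, crucially, that its residue is closed, so that the apparent pole of $dE(s,\phi)$ actually cancels; and for $k>f/2$ one must align the simple zero of the prefactor with the at-most-simple pole of $E(\cdot,du\wedge\phi)$ to identify the limit with a residue rather than with a value. Both rely on the pole-order estimate of Proposition \ref{satzpolord} and on the $L^2$-property and harmonicity of the residues furnished by Corollary \ref{zerlres}.
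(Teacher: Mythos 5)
Your proposal is correct and follows essentially the same route as the paper: both arguments rest on the reparametrized functional equations \eqref{deformel} and \eqref{duformel} together with the pole-order bounds of Proposition \ref{satzpolord} and Corollary \ref{zerlres}, reading off the two cases from the sign of $a_k$ in the prefactor $a_k-d_k+s$. Your treatment is in fact slightly more careful than the paper's at two points — the explicit Laurent expansion showing that closedness of $\tE(\phi)$ cancels the apparent pole of $dE(s,\phi)$ when $k<f/2$, and the differentiation of the identity in $s$ to handle $E(2d_k,du\wedge\phi)$ when $k>f/2$ and no pole occurs — but these are refinements of the same argument, not a different one.
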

\begin{proof}
Let $k<f/2$ and $\phi\in  \H^{*,k}(M)$. In equation \eqref{duformel}
the left hand side is holomorphic at $s=2d_k$.
For the residues follows
$
 \widetilde{E}(du\wedge \phi)=0,
$ so that $E(.,du\wedge \phi)$ is holomorphic in $2d_k$.

For the  exactness we use \eqref{deformel},
$
 E(2d_k, du\wedge\phi)=\frac{1}{2d_k} d E(2d_k,\phi).
$
If otherwise $\phi\in  \H^{*,k}(M)$ for $k>f/2$,
then again from  \eqref{deformel}
\[
 d E(s,\phi)=(s-2d_k) E(s, du\wedge\phi),
\]
and so the claim about the residue follows in the limit $s\to 2d_k$.
\end{proof}

Next we consider the middle fiber degree, $\phi\in \H^{*,f/2}(M)$.
From Proposition \ref{polposition} we know that $T_{00}^{[f/2]}(s,\phi)$ is holomorphic in $s=0$.
The functional equation \eqref{fktgle}
here implies
\[
 T_{00}^{[f/2]}(0, T_{00}^{[f/2]}(0,\phi))=\phi\in \H^{*,f/2}(M).
\]
Additionally,  $(T_{00}^{[f/2]})^*=T_{00}^{[f/2]}$ by Lemma \ref{lemma3}.
Thus for $p\ge f/2$ there is a decomposition of $\H^{*,f/2}(M)$ into the direct orthogonal sum
\begin{equation} \label{splitf2}
\H^{p-f/2,f/2}(M)=\H_+^p\oplus \H_-^p
\end{equation}
with\index{$\H_\pm^p$}
\[
 \H_\pm^p=\{\phi\in \H^{p-f/2,f/2}(M)\mid T_{00}^{[f/2]}(0,\phi)=\pm\phi\}.
\]
The Hodge-isomorphism $\H^p(M)\to H^p(M)$ gives a corresponding splitting 
\[
H^{p-f/2}(B,\H^{f/2})=:H^{p-f/2,f/2}(M)=\H_+^p(M)\oplus \H_-^p(M).
\]

\begin{lem}\label{hodgeiso}
The Hodge-star-operator
\[
 \astm: \H_\pm^{p} \to \H_\mp^{n-p}
\]
is an isomorphism.
\end{lem}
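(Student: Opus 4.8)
The plan is to reduce the statement to a single algebraic fact: that $\astm$ \emph{anticommutes} with the involution $T_{00}^{[f/2]}(0)$ on $\H^{*,f/2}(M)$, so that it interchanges the two eigenspaces $\H_\pm$. Once this is in hand the isomorphism property is automatic, since $\astm$ is invertible on forms with $\astm\astm=(-1)^{p(n-p)}\,\mathrm{id}$ on $p$-forms, where $n=\dim M$.

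Before the main computation I would verify that $\astm$ restricts to an isomorphism $\H^{p-f/2}(B,\H^{f/2}(F))\xrightarrow{\sim}\H^{(n-p)-f/2}(B,\H^{f/2}(F))$, which is what makes $T_{00}^{[f/2]}(0,\astm\phi)$ well defined. Because the splitting $TM=T^HM\oplus TF$ is orthogonal and $T^HM\simeq\pi^*TB$ is an isometry, $\astm$ is, up to sign, the tensor product of the base Hodge star with the fiberwise star $\ast$ under the bundle isomorphism \eqref{bm3.3}. Since $\ast$ commutes with $\Delta^F$ it preserves fiberwise harmonicity, and as it sends fiber degree $l$ to $f-l$ it fixes the middle degree $f/2$. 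Furthermore, as $d^{1,0}$ and $\delta^{1,0}$ shift the bidegree by $(\pm1,0)$, distinct from the shifts of $d^F,\delta^F$ (and, were (A) to fail, of $d^{2,-1},\delta^{2,-1}$), the standard relation $\delta^M=\pm\astm d^M\astm$ separates into components; hence $\astm$ intertwines $d^{1,0}$ with $\delta^{1,0}$ up to sign, commutes with $\Delta_{1,0}$, and carries $\Delta_{1,0}$-harmonic forms of fiber degree $f/2$ to such forms.

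The key step is to evaluate $T_{00}^{[f/2]}(0,\astm\phi)$ from the functional equation \eqref{sternswitch}. For $\phi\in\H^{*,f/2}(M)$ one has $k=f/2$, hence $a_k=d_k=0$, and \eqref{sternswitch} reads
\begin{equation*}
s\,T_{00}(s,\astm\phi)=-s\,\astm T_{00}^{[f/2]}(s,\phi)+\sum_{l\neq f/2}\big(a_l+i\sqrt{-s^2-d_l^2}\,\big)\,\astm T_{00}^{[l]}(s,\phi).
\end{equation*}
Now I project both sides onto fiber degree $f/2$. On the left this produces $s\,T_{00}^{[f/2]}(s,\astm\phi)$. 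On the right, $\astm T_{00}^{[l]}(s,\phi)$ has fiber degree $f-l$, which equals $f/2$ only when $l=f/2$; so every term of the sum is annihilated and only $-s\,\astm T_{00}^{[f/2]}(s,\phi)$ survives. Cancelling the factor $s$ and letting $s\to0$ — legitimate because $T_{00}^{[f/2]}(\cdot,\phi)$ is holomorphic at $s=0$ by Proposition \ref{polposition} — yields
\begin{equation*}
T_{00}^{[f/2]}(0,\astm\phi)=-\,\astm\,T_{00}^{[f/2]}(0,\phi).
\end{equation*}

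Finally, if $\phi\in\H_\pm^p$, that is $T_{00}^{[f/2]}(0,\phi)=\pm\phi$, then $T_{00}^{[f/2]}(0,\astm\phi)=\mp\astm\phi$, so $\astm\phi\in\H_\mp^{n-p}$ by definition of these eigenspaces; thus $\astm(\H_\pm^p)\subseteq\H_\mp^{n-p}$. Applying the same inclusion with $p$ replaced by $n-p$ gives $\astm(\H_\mp^{n-p})\subseteq\H_\pm^{p}$, and since $\astm\astm=\pm\mathrm{id}$ the two inclusions are mutually inverse up to a nonzero scalar, so $\astm:\H_\pm^p\to\H_\mp^{n-p}$ is an isomorphism. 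I expect the only delicate point to be the bookkeeping in the projection step — in particular confirming that the fiberwise star shifts the fiber degree $l\mapsto f-l$ so that all off-diagonal terms of \eqref{sternswitch} drop out — together with the justification for dividing by $s$ and passing to the limit.
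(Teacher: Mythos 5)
Your proof is correct and follows essentially the same route as the paper: the paper's own (one-line) proof likewise deduces the anticommutation relation $T_{00}^{[f/2]}(s,\astm\phi)=-\astm T_{00}^{[f/2]}(s,\phi)$ from the functional equation \eqref{sternswitch} and concludes that $\astm$ swaps the $\pm1$ eigenspaces. Your version merely makes explicit the projection onto fiber degree $f/2$, the cancellation of the factor $s$, and the invertibility of $\astm$, all of which the paper leaves implicit.
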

\begin{proof}
This follows from the functional equation \eqref{sternswitch},
\[
 T_{00}^{[f/2]}(s,\astm\phi)=-\astm  T_{00}^{[f/2]}(s,\phi).\qedhere
\]
\end{proof}

\begin{satz}\label{satz49}
Let $\phi\in \H^{*,f/2}(M)$.
The generalized eigenform  $E(s,\phi)$ is holomorphic and closed in $s=0$. $E(s, du\wedge\phi)$ is holomorphic in $0$.
\end{satz}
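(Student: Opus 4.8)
The plan is to exploit that in the middle fibre degree $k=f/2$ the two distinguished spectral values collapse: $a_k=f/2-k=0$ and $d_k=0$, hence $2d_k=0$, so the entire analysis is concentrated at the single point $s=0$. I would first establish holomorphicity of both $E(\cdot,\phi)$ and $E(\cdot,du\wedge\phi)$ at $s=0$, and only then deduce closedness, since the latter will drop out of the functional equation \eqref{deformel} almost for free once the holomorphicity is known.

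For the holomorphicity of $E(s,\phi)$ I would invoke Proposition \ref{polposition}. With $d_k=0$ its conclusion places the poles of $E(\cdot,\phi)$ in $U\cap\{\Re s\ge d_k\}=U\cap\{\Re s\ge 0\}$ inside the interval $(d_k,2d_k]=(0,0]$, which is empty; since $s=0$ lies in this region, $E(\cdot,\phi)$ has no pole there. To avoid the edge-case ambiguity of the collapsed interval I would give the airtight version: by Proposition \ref{satzpolord} the pole order of $E(\cdot,\phi)$ at $s=0$ equals the maximal pole order of the scattering coefficients $T_{0\nu}^{[l]}(\cdot,\phi)$ and $S_{0\gamma}^{[l]}(\cdot,\phi)$, and these are all regular at $s=0$ by the case $s_0=0$, $k=f/2$ in the proof of Proposition \ref{polposition}, together with the holomorphicity of $T_{00}^{[f/2]}$ at $s=0$ asserted there. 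Hence $E(\cdot,\phi)$ is holomorphic at $s=0$.

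To obtain the holomorphicity of $E(s,du\wedge\phi)$ I would pass through the Hodge star. Since $\astm$ interchanges fibre degree $b$ with $f-b$, the form $\astm\phi$ again lies in $\H^{*,f/2}(M)$ (this is exactly the content of Lemma \ref{hodgeiso}), so the previous step applies verbatim to $\astm\phi$ and $E(\cdot,\astm\phi)$ is holomorphic at $s=0$. The functional equation \eqref{duformel}, namely $E(s,\astm\phi)=\ast E(s,du\wedge\phi)$, is an identity of meromorphic families on the spectral surface; as $\ast$ is an $s$-independent isometric isomorphism I may solve for $E(s,du\wedge\phi)=\pm\ast E(s,\astm\phi)$, and holomorphicity transfers from the right-hand side of this identity. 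Thus $E(\cdot,du\wedge\phi)$ is holomorphic at $s=0$.

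Finally, closedness of $E(0,\phi)$ follows from \eqref{deformel}. For $k=f/2$ one has $a_k-d_k=0$, so the equation reduces to $dE(s,\phi)=s\,E(s,du\wedge\phi)$. Both factors on the right are holomorphic at $s=0$ by the two preceding steps, so the product vanishes at $s=0$; evaluating the (now holomorphic) identity at $s=0$ gives $dE(0,\phi)=0$, i.e.\ $E(0,\phi)$ is closed. I do not expect a serious obstacle here: the only points requiring care are to reuse Proposition \ref{polposition} for $\astm\phi$ (legitimate because $\astm\phi$ stays in the middle fibre degree), and to treat \eqref{deformel} and \eqref{duformel} as identities of meromorphic functions valid in a punctured neighbourhood of $s=0$, so that once holomorphicity is in hand the limits and evaluations at $s=0$ are justified.
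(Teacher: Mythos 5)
Your argument is correct and follows essentially the same route as the paper: holomorphicity of $E(\cdot,\phi)$ at $s=0$ from Proposition \ref{polposition} (the paper also cites Proposition \ref{residuen}, which via Corollary \ref{zerlres} gives the same conclusion since $T_{00}^{[f/2]}$ is regular there), transfer to $E(\cdot,du\wedge\phi)$ via \eqref{duformel}, and closedness from \eqref{deformel} with $a_k-d_k=0$. Your extra care with the collapsed interval $(0,0]$ and the appeal to Proposition \ref{satzpolord} is a reasonable tightening, not a departure.
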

\begin{proof}
That $E$ is holomorphic in $0$ follows from Propositions \ref{polposition} and \ref{residuen}.
Then  $E(s,du\wedge \phi)$ is holomorphic in $s=0$ because of \eqref{duformel} and \eqref{deformel}
implies $d E(s,\phi)=0$.
\end{proof}

\begin{satz}\label{satz50}
$E(0,\phi)=0$ for $\phi\in \H_-$,  and $E(0,du\wedge\phi)=0$ for $\phi\in \H_+$.
\end{satz}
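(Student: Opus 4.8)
The plan is to deduce both statements from the functional equation \eqref{fktgle} evaluated at the fixed point of the relevant deck transformation. Since we are in the middle fiber degree $k=f/2$ we have $d_k=a_k=0$, the ramification point associated with $\gamma_{0,f/2}$ is $\lambda=0$, and in the reparametrization of section~\ref{epole} the transformation $\gamma_{0,f/2}$ becomes $s\mapsto 2d_k-s=-s$, whose fixed point is exactly $s=0$. By Proposition~\ref{satz49} the form $E(\cdot,\phi)$ is holomorphic at $s=0$, and by Proposition~\ref{polposition} the operator $T_{00}^{[f/2]}$ is holomorphic there as well, so both sides of \eqref{fktgle} may legitimately be specialized to this point.

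First I would treat $\phi\in\H_-$. Written in the $s$-variable, \eqref{fktgle} for $k=f/2$ reads $E(s,\psi)=E\bigl(-s,\,T_{00}^{[f/2]}(s,\psi)\bigr)$, and evaluating at the fixed point $s=0$ gives
\[
E(0,\phi)=E\bigl(0,\,T_{00}^{[f/2]}(0,\phi)\bigr).
\]
By the definition of $\H_-$ in \eqref{splitf2} we have $T_{00}^{[f/2]}(0,\phi)=-\phi$, and since $E(0,\cdot)$ is linear this yields $E(0,\phi)=-E(0,\phi)$, hence $E(0,\phi)=0$.

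For the second statement I would avoid re-running the argument with the companion operator $\check T_{00}^{[f/2]}$ and instead reduce to the first part. By Lemma~\ref{hodgeiso} the fiberwise Hodge star $\astm$ maps $\H_+^{p}$ isomorphically onto $\H_-^{n-p}$, both living in fiber degree $f/2$, so for $\phi\in\H_+$ the form $\astm\phi$ lies in $\H_-$. The first part then gives $E(0,\astm\phi)=0$. Combining this with the functional equation \eqref{duformel} specialized to $s=0$,
\[
\ast\,E(0,du\wedge\phi)=E(0,\astm\phi)=0,
\]
and since $\ast$ is invertible we conclude $E(0,du\wedge\phi)=0$.

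The one genuinely delicate point is the passage to the fixed point: \eqref{fktgle} is an identity of meromorphic sections over the spectral surface $\Sigma_s$, so I must justify that substituting the ramification value $s=0$ introduces no spurious contribution from the second sheet. This is precisely where Propositions~\ref{satz49} and \ref{polposition} are indispensable, since they guarantee that both $E(\cdot,\phi)$ and $T_{00}^{[f/2]}$ extend holomorphically across $s=0$; granting that, the evaluation is unambiguous. Everything else is routine bookkeeping with the $\pm1$-eigenspace decomposition \eqref{splitf2} and the linearity of $E(0,\cdot)$.
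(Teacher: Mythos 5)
Your proof is correct, but for the first claim it takes a genuinely different route from the paper. The paper proves $E(0,\phi)=0$ for $\phi\in\H_-$ analytically, by specializing the Maa{\ss}--Selberg relation of Proposition \ref{maass-selb1} to $k=f/2$, letting $\tau\to 0$, and observing that the coefficient of $r$ in $\|E^r(0,\phi)\|^2$ is $2r\|\phi\|^2+r\big(\sce{T_{00}^{[f/2]}(0,\phi)}{\phi}+\sce{\phi}{T_{00}^{[f/2]}(0,\phi)}\big)$, which vanishes exactly on $\H_-$; the norm then reduces to a remainder $G_r\to 0$, forcing $E(0,\phi)=0$. You instead evaluate the functional equation \eqref{fktgle} at the fixed point $s=0$ of the deck transformation $s\mapsto -s$ and use that $T_{00}^{[f/2]}(0,\cdot)=-\mathrm{id}$ on $\H_-$, which is the classical Eisenstein-series argument at the center of symmetry. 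Your justification of the delicate step — that both $E(\cdot,\phi)$ and $T_{00}^{[f/2]}$ are holomorphic across the ramification point by Propositions \ref{satz49} and \ref{polposition}, so the meromorphic identity may be specialized there — is exactly what is needed, and there is no circularity since those two propositions are established independently of \ref{satz50}. The trade-off: your argument is shorter and purely formal once \eqref{fktgle} is available, while the paper's computation additionally exhibits the growth $\|E^r(0,\phi)\|^2\sim 4r\|\phi\|^2$ on $\H_+$, i.e.\ it shows as a byproduct that $E(0,\phi)$ is genuinely non-square-integrable and nonzero there, information the fixed-point argument does not give. Your deduction of the second claim via Lemma \ref{hodgeiso} and \eqref{duformel} is identical to the paper's.
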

\begin{proof}
From Proposition \ref{maass-selb1} for real $\tau$ near $0$:
\begin{multline}
 \|E^r(\tau,\phi)\|^2 = 
r (\| \phi\|^2 +  \| T_{00}^{[f/2]}(\tau,\phi) \|^2) \\
-i \sqrt{\tau} \Big\{ \Sce{\textstyle{\frac{d}{d\Lambda}|_\tau} T_{00}^{[f/2]}(.,\phi)}{T_{00}^{[f/2]}(\tau,\phi)} -
   \Sce{T_{00}^{[f/2]}(\tau,\phi)} {\textstyle{\frac{d}{d\Lambda}|_\tau} T_{00}^{[f/2]}(.,\phi)}\Big\}\\
 +\frac{1}{2i\sqrt{\tau}}\Big\{ e^{2 i \sqrt{\tau} r}\sce{T_{00}(\tau,\phi)}{\phi } 
    -e^{-2 i \sqrt{\tau} r}\sce{\phi }{T_{00}(\tau,\phi)} \Big\} \\
-\frac{e^{-2  i r \sqrt{\tau}}}{2 i\sqrt{\tau}} \| T_{00}^{[f/2]}(\tau,\phi) \|^2
+G_r
\end{multline}
where  the remainder term $G_r\xrightarrow[r\to\infty]{}0$.
Since $T_{00}^{[f/2]}(\tau, \phi)$ is regular in $\tau=0$ and $\|\phi\|^2=\|T_{00}^{[f/2]}(0,\phi)\|^2$, we conclude in the limit $\tau\to 0$
 \begin{equation*}
\|E^r(0, \phi)\|^2 = 
2 r\cdot \: \| \phi\|^2 
+r \cdot\Big\{ \sce{T_{00}^{[f/2]}(0, \phi)}{\phi} +\sce{\phi}{T_{00}^{[f/2]}(0, \phi)} \Big\} 
+G_r.
\end{equation*}
Now if $\phi\in \H_-$, this gives the equality
$
\|E^r(0, \phi)\|^2 = G_r,
$
and the first claim follows by taking the limit $r\to\infty$.

Now let $\phi\in \H_+ \Longrightarrow \astm\phi\in \H_-$. The second statement then is a consequence of
\[
  E(s,du\wedge \phi)=\pm\ast E(s,\astm\phi)=0.\qedhere
\]
\end{proof}

Now we want to collect all information about harmonic representatives defined by generalized eigenforms in a single map.
Let $\phi\in \H^{p-k}(B,\H^k(F))$.
We define a map $\Xi: \H^p(M)\to \Omega^p(X)$ by
\begin{equation*}
 \Xi(\phi)=
\begin{cases}
\tE(\phi),&k<f/2\quad\text{and}\quad\phi\in \bild \tC^{[k]}\\
E(0,\phi),&k=f/2\quad\text{and}\quad\phi\in \H_+\\
E(2d_k,\phi),&k>f/2\quad\text{and}\quad\astm\phi\in \ker \tC^{[f-k]}\\
0,&\text{otherwise}
\end{cases}
\end{equation*}
and linear extension to $\H^p(M)$. The differential forms $\Xi(\phi)$ will be called \emph{singular values}.

From  Corollary \ref{zerlres}, Propositions \ref{satz47} and \ref{satz49} it follows that singular values are closed harmonic differential forms; as such they represent classes in $H^p(X)$. Thus $\Xi$ extends to a map  $\Xi:H^p(M)\to H^p(X)$ by setting
\[
 \Xi([\phi])= [\Xi(\phi)],\qquad \phi\in \H^p(M).
\]

\subsection{Restriction map}
We want to consider the restriction of classes of singular values to the ``boundary'' $M$. More precisely, we identify $Y_s=\{s\}\times M\subset Z$ for $s>0$ with $M$ and 
let $r=i_s^*:H^p(X)\to H^p(M)$ with the inclusion $i_s:Y_s\hookrightarrow X$. To see that $r$ is well-defined, let $\gamma$ be a cycle in  $M$ and let $\gamma_s, \gamma_t$ denote the corresponding cycles in $Y_s$ and $Y_t$, respectively. From Stokes' theorem  for $[\theta]\in H^p(X)$
\begin{equation} \label{zyka}
 \int_{\gamma_s} i_s^*\theta- \int_{\gamma_t}i_t^*\theta= \int_{\partial([s,t]\times \gamma)} \theta= \int_{[s,t]\times \gamma} d\theta =0.
\end{equation}
The Theorem of de~Rham  states that the map $\Psi^*:H^p(M)\to H^p_{\text{sing}}(M)$ induced by $\Psi(\theta)(\gamma)=\int_\gamma\theta$ is an isomorphism, so that $[i_t^*\theta]=[i_s^*\theta]$.

\begin{lem}\label{randrham}
Let $\theta\in \Omega^*(X)$ be a closed form such that the restriction $\theta|_Z=\theta(u,y)$  is rapidly decreasing for $u\to\infty$.
Then $r([\theta])=0$.

Furthermore if $\theta\in\Omega^p(X)$ is a closed form with $\Pi_0 (\theta|_Z)=0$, then $r[\theta]=0$.
\end{lem}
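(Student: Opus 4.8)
The plan is to deduce both statements from the de~Rham isomorphism recalled just before the lemma: since that isomorphism identifies $r[\theta]=[i_s^*\theta]\in H^p(M)$ with its system of periods, it suffices to show that $\int_\gamma i_s^*\theta=0$ for every $p$-cycle $\gamma$ in $M$. Two facts are used throughout. First, $i_s^*\theta$ is a \emph{closed} form on $M$, because $i_s^*$ commutes with $d$ and $\theta$ is closed. Second, writing $\theta|_Z=\alpha+du\wedge\beta$, the pullback to the slice $Y_s$ is $i_s^*\theta=\alpha(s,\cdot)$, so that $\int_\gamma i_s^*\theta=\int_{\gamma_s}\theta$ with $\gamma_s=\{s\}\times\gamma$, and by \eqref{zyka} this period does not depend on $s$.

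For the first statement I would exploit this $s$-independence directly. Since $\theta|_Z$ is rapidly decreasing as $u\to\infty$, the components of $\alpha(s,\cdot)$ tend to $0$ uniformly on the fixed compact cycle $\gamma$, hence $\int_\gamma i_s^*\theta\to 0$ as $s\to\infty$. Being independent of $s$, the period is therefore identically zero; as $\gamma$ was arbitrary, the de~Rham theorem gives $[i_s^*\theta]=0$, i.e. $r[\theta]=0$.

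For the second statement, set $\omega\defgleich i_s^*\theta$, a closed $p$-form on $M$. Restricting $\theta|_Z$ to $u=s$ kills the $du$-part, and $\Pi_0(\theta|_Z)=0$ forces $\Pi_0\alpha=0$, so $\Pi_0\omega=0$. Since $M$ is closed and orientable, Poincaré duality says $[\omega]=0$ in $H^p(M)$ if and only if $\int_M\omega\wedge\eta=0$ for every closed $(n-p)$-form $\eta$ ($n=\dim M$), and this pairing depends only on $[\eta]\in H^{n-p}(M)$ because $\omega$ is closed. By Proposition \ref{satzspec} each such class has a fiber-harmonic representative, so I may take $\eta\in\Omega^*(B,\H^*(F))$. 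For the locally product metric $g^M$ the fiberwise Hodge star commutes with the vertical Laplacian, so $\astm$ preserves fiber-harmonicity; hence $\astm^{-1}\eta$ is again fiber-harmonic, i.e. $\Pi_0(\astm^{-1}\eta)=\astm^{-1}\eta$. Using that $\Pi_0$ is the orthogonal projection of \eqref{faserorth}, and therefore self-adjoint,
\[
\int_M\omega\wedge\eta=\Sce{\omega}{\astm^{-1}\eta}=\Sce{\omega}{\Pi_0\astm^{-1}\eta}=\Sce{\Pi_0\omega}{\astm^{-1}\eta}=0 .
\]
Since $[\eta]$ was arbitrary, Poincaré duality yields $[\omega]=0$, that is $r[\theta]=0$.

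The $s$-independence and the de~Rham reduction are routine; the genuine obstacle is the second statement, where one must convert the purely topological vanishing of periods into an $L^2$-orthogonality. The device is Poincaré duality combined with Proposition \ref{satzspec}: pairing $\omega$ against a fiber-harmonic representative of the dual class works precisely because $\astm$ keeps that representative in the range of $\Pi_0$, where it is annihilated by $\Pi_0\omega=0$. In writing this up I would verify the sign and conjugation conventions in $\Sce{\omega}{\astm^{-1}\eta}=\int_M\omega\wedge\eta$, and confirm that $\Pi_0$ taken on a single slice agrees with the ambient $\Pi_0$ on $Z$, which it does since fiberwise harmonicity is insensitive to the conformal scaling $e^{-2u}g^F$.
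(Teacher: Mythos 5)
Your argument is correct. The first statement is handled exactly as in the paper: the period $\int_{\gamma_s}\theta$ is independent of $s$ by \eqref{zyka} and tends to $0$ by rapid decrease, so all periods vanish and de~Rham's theorem gives $r[\theta]=0$. For the second statement you take a genuinely different, but equally valid, route. The paper argues via the Hodge decomposition on the closed manifold $M$: since $i_u^*\theta$ is orthogonal to all of $\Omega^*(B,\H^*(F))$, it is in particular orthogonal to $\H^p(M)=\bigoplus_{r+s=p}\H^r(B,\H^s(F))$, which under the standing hypotheses is the space of $\Delta_M$-harmonic forms, and a closed form orthogonal to the harmonic forms is exact. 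You instead invoke Poincar\'e duality and pair $\omega=i_s^*\theta$ against fiber-harmonic representatives of the dual classes supplied by Proposition \ref{satzspec}, using that $\astm$ preserves fiber-harmonicity and that $\Pi_0$ is an orthogonal projection. Both arguments rest on the same essential input --- Proposition \ref{satzspec}, hence the standing conditions under which it holds --- and both are sound. Your version is slightly longer because it routes through the nondegeneracy of the cup-product pairing rather than the orthogonal Hodge splitting, but it has the small advantage of not needing to identify $\H^p(M)$ with $\ker\Delta_M$; the price is the extra (correct, and used elsewhere in the paper, e.g.\ in \eqref{qformel} and Lemma \ref{hodgeiso}) observation that $\astm$ maps $\Omega^a(B,\H^b(F))$ into $\Omega^{\dim B-a}(B,\H^{f-b}(F))$. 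Your closing caveats (conjugation conventions in the inner product, and the insensitivity of fiber-harmonicity to the conformal scaling $e^{-2u}g^F$ when passing from $\Pi_0(\theta|_Z)=0$ to $\Pi_0(i_s^*\theta)=0$) are exactly the right points to check, and both work out.
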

\begin{proof}
Since $i_t^*\theta$ is rapidly decreasing for $t\to\infty$,  \eqref{zyka} shows
\[
 \int_{\gamma} i^*\theta=0,
\]
which implies $r[\theta]=0\in H^p(M).$

Now let $\Pi_0 (\theta|_Z)=0$. In particular there is a $u>0$ such that
\[
 i_u^*\theta \perp \Omega^*(B,\H^*(F))\quad \Longrightarrow\quad i_u^*\theta \perp \H^*(M).
\]
This shows $[i_u^*\theta]=0\in H^p(M)$, and we have seen that this class is independent of $u$.
\end{proof}

\begin{cor}
Let $\phi\in \H^{p-k}(B,\H^k(F))$. Then the restriction $(r\circ\Xi)[\phi]$ of singular values  to $H^p(M)$ is given by
\begin{subequations}
\[
(r\circ\Xi)[\phi]=\left\{
\text{
\begin{minipage}{0.8\textwidth}
\begin{flalign}
&[\sum_{l<f/2}{\tC}^{[l]}(\phi)] ,&&\text{for }k<f/2,\quad \phi\in \bild \tC^{[k]},  \label{einsch1}\\ 
& 2[\phi]+\sum_{l<f/2}[T_{00}^{[l]}(2d_k,\phi)] ,&& \text{for }k=f/2, \quad\phi\in \H_+\label{einsch2} \\
& [\phi]+\sum_{l\le f/2}[T_{00}^{[l]}(2d_k,\phi)] ,&&\text{for } k>f/2,\quad \astm\phi\in \ker \tC^{[f-k]}\label{einsch3}\\
&0&&\text{otherwise}\nonumber%
\end{flalign}\nonumber
\end{minipage}
}
\right.
\]
\end{subequations}
\end{cor}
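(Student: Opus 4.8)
The plan is to evaluate $r[\Xi(\phi)]$ by reading off the non-decaying fibre-harmonic coefficients of the singular value on the end $Z$. By Corollary \ref{zerlres} and Propositions \ref{satz47}, \ref{satz49} each $\Xi(\phi)$ is a closed harmonic form, so by the Stokes computation \eqref{zyka} the class $r[\Xi(\phi)] = [i_u^*\Xi(\phi)] \in H^p(M)$ is well defined and independent of $u$. Writing $\Xi(\phi)|_Z = \alpha(u) + du\wedge\beta(u)$ with $\alpha,\beta$ forms on $M$, the closedness $d\Xi(\phi)=0$ is equivalent to $d^M\alpha = 0$ and $\partial_u\alpha = d^M\beta$; hence $r[\Xi(\phi)] = [\alpha(u)]$, and this class does not depend on $u$.

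Next I would show that only the constant fibre-harmonic coefficients enter. By Proposition \ref{satzspec} the de~Rham class of a closed form on $M$ is detected, under $H^p(M)\cong\bigoplus_{r+s=p}\H^r(B,\H^s(F))$, by its $\Delta_{1,0}$-harmonic fibre-harmonic part; the $\Pi_\perp$-part and the $\Delta_{1,0}$-eigencomponents for eigenvalue $\nu>0$ are orthogonal to $\H^*(M)$ and contribute nothing (this is the cohomological content of Lemma \ref{randrham}). Thus, from the expansion \eqref{aympt1}/\eqref{aymptneu} of $\Pi_0\Xi(\phi)$, only the $du$-free terms with $\nu=0$ survive, giving $[\alpha(u)] = \sum_l e^{c_l u}[\psi_l]$ where $\psi_l\in\H^{p-l}(B,\H^l(F))$ is the corresponding coefficient and $c_l$ its exponent. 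Since $[\alpha(u)]$ is independent of $u$ and the functions $e^{c_l u}$ are linearly independent, every summand with $c_l\neq 0$ has vanishing class, so that $r[\Xi(\phi)] = \sum_{l:\,c_l=0}[\psi_l]$.

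It then remains to compute the exponents at the spectral value $s=2d_k$, where $\lambda = s(2d_k-s)=0$. The incoming term $e_{-,0}(\cdot,\phi)$ of fibre degree $k$ has exponent $a_k-d_k+s$, equal to $a_k+d_k$ at $s=2d_k$, which vanishes precisely for $k\ge f/2$, its coefficient then being $\phi$. The scattered term of fibre degree $l$ has exponent $a_l + i\sqrt{\lambda-d_l^2}$, which at $\lambda=0$ equals $a_l-d_l$; this is $0$ for $l\le f/2$ and strictly negative for $l>f/2$, and the coefficient is $T_{00}^{[l]}(2d_k,\phi)$ (or $\tC^{[l]}(\phi)$ in the residue case). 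Collecting the $c_l=0$ coefficients now yields each line. For $k>f/2$ one keeps the incoming class $[\phi]$ together with the scattered classes of degrees $l\le f/2$, giving $[\phi] + \sum_{l\le f/2}[T_{00}^{[l]}(2d_k,\phi)]$. For $k=f/2$ both the incoming term and the middle scattered term contribute $\phi$---the latter because $T_{00}^{[f/2]}(0,\phi)=\phi$ for $\phi\in\H_+$ by \eqref{splitf2}---so one obtains $2[\phi] + \sum_{l<f/2}[T_{00}^{[l]}(0,\phi)]$. For $k<f/2$ one passes to $\tE(\phi)=\res_{2d_k}E(\cdot,\phi)$: the incoming term is holomorphic and drops out, the middle coefficient $\tC^{[f/2]}$ vanishes because $T_{00}^{[f/2]}$ is regular at $2d_k$ (Proposition \ref{polposition}), and the remaining constant coefficients are $\tC^{[l]}(\phi)$, $l<f/2$, giving $\big[\sum_{l<f/2}\tC^{[l]}(\phi)\big]$. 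In the remaining case $\Xi(\phi)=0$ and the restriction is zero.

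The main obstacle is the correct bookkeeping of the exponents of the constant term at the branch point $\lambda=0$: one must use the physical exponent $a_l+i\sqrt{\lambda-d_l^2}$ for the scattered coefficient of fibre degree $l$ (which collapses to $a_l-d_l$ at $\lambda=0$, so that degrees $l>f/2$ decay while degrees $l\le f/2$ stay constant), rather than the degree-$k$ shorthand $\ua+\ud-s$ appearing in \eqref{aymptneu}. The second delicate point, which the closedness relation $\partial_u\alpha=d^M\beta$ handles, is that growing and decaying fibre-harmonic coefficients---although they are themselves $\Delta_{1,0}$-harmonic and a priori carry a class---must drop out of $[\alpha(u)]$; combined with the orthogonality furnished by Proposition \ref{satzspec} and the regularity statements of Proposition \ref{polposition}, this pins down exactly the terms listed in each case.
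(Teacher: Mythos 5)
Your proposal is correct and follows essentially the same route as the paper: both reduce $r[\Xi(\phi)]$ to the constant term via Lemma \ref{randrham} (equivalently, the orthogonality of the non-harmonic and $\nu>0$ components to $\H^*(M)$), then read off which coefficients of the expansion \eqref{aymptneu} have vanishing exponent at $s=2d_k$ — incoming exponent $a_k+d_k$, scattered exponent $a_l-d_l$ — keeping exactly the fibre degrees $l\le f/2$ and, in the $k<f/2$ case, passing to residues so that the holomorphic incoming term and the regular $T_{00}^{[f/2]}$ drop out. Your extra step (linear independence of the exponentials $e^{c_l u}$ combined with $u$-independence of the class) is a harmless refinement of the paper's direct appeal to Lemma \ref{randrham} for the decaying terms.
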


\begin{proof}
From the second statement of Lemma \ref{randrham} we obtain $r[E]=r[\Pi_0 E]$ if $E$ is closed, and  $r[\tE]=r[\Pi_0 \tE]$. Thus it is sufficient to examine the restrictions of the respective constant terms to the ``boundary'' $M$.

From the asymptotic expansion \eqref{aymptneu} of $\Pi_0 E$ it follows for $k>f/2$
\[
 \Pi_0 E(2d_k,\phi)=\phi + \sum_{l\le f/2} T_{00}^{[l]}(2d_k,\phi) + \sum_{l> f/2} e^{-2d_l u}T_{00}^{[l]}(2d_k,\phi)+\theta
\]
with exponentially decreasing $\theta$.
Then Lemma \ref{randrham} gives
\[
 r[\Xi[\phi]]=r[E(2d_k,\phi)]=r[\Pi_0 E(2d_k,\phi)]= [\phi]+\sum_{l\le f/2}[T_{00}^{[l]}(2d_k,\phi)].
\]
Similarily we derive  \eqref{einsch2} for $r[E(0,\phi)]$ in the case $k=f/2$ and $\phi\in \H_+$.

Finally in the case $k<f/2$,
\[
  \Pi_0 \tE(2d_k,\phi)=\sum_{l<f/2} {\tC}^{[l]}(\phi) + \sum_{l> f/2} e^{-2d_l u}{\tC}^{[l]}(\phi)+\tilde\theta,
\]
where we used that $T_{00}^{[f/2]}$ is holomorphic (Proposition \ref{polposition}).
This gives  \eqref{einsch1}.
\end{proof}

\subsection{A Hodge-type theorem}\label{ktheorem}
Let $X$ be a manifold with fibered cusp metric, such that conditions (A) and (B) hold.
Let
\[
\mathcal{A}^p=\bigoplus_{k=0}^f \mathcal{A}^{(p-k,k)}\qquad\text{with}\qquad \mathcal{A}^{(p-k,k)}\defgleich
\begin{cases}
\bild {\tC}^{[k]}, & k<f/2\\
\H_+^p,&k=f/2\\
\astm\ker {\tC}^{[f-k]}, & f/2<k\le \min\{f,p\}\\
0,& k> \min\{f,p\}
\end{cases}
\]

\begin{xthm}\label{thm2}
Let $\mathfrak{h}:\H^p(M)\to H^p(M), \phi\mapsto [\phi]$ be the Hodge-isomorphism. Then
$\bild (r: H^p(X)\to H^p(M))=\mathfrak{h}(\mathcal{A}^p)$.
\end{xthm}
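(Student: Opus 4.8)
The plan is to establish the two inclusions $\mathfrak{h}(\mathcal{A}^p)\subseteq\bild(r)$ and $\bild(r)\subseteq\mathfrak{h}(\mathcal{A}^p)$ separately, deducing the second from a dimension count once the first is secured. Throughout I would use two structural facts: by Lemma~\ref{randrham} the restriction $r[\theta]$ of a closed form depends only on the fiber-harmonic part of its constant term on the end; and $X$ deformation retracts onto the compact piece $X_0$, so that $r$ is identified with the boundary restriction $H^p(X_0)\to H^p(\partial X_0)=H^p(M)$ of a compact oriented manifold with boundary. The latter is what makes Poincaré–Lefschetz duality available.

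For $\mathfrak{h}(\mathcal{A}^p)\subseteq\bild(r)$ I would use the singular values. By Corollary~\ref{zerlres} and Propositions~\ref{satz47} and~\ref{satz49} each $\Xi(\phi)$ is a closed harmonic form, hence defines a class in $H^p(X)$, and its restriction is computed by \eqref{einsch1}, \eqref{einsch2}, \eqref{einsch3}. In each case the leading coefficient sits in the top fiber degree $k$ and sweeps out all of $\mathcal{A}^{(p-k,k)}$: for $k>f/2$ it is $[\phi]$ itself; for $k=f/2$ it is $2[\phi]$, which together with the splitting \eqref{splitf2} realizes $\H_+^p$; and for $k<f/2$ it is $\tC^{[k]}(\phi)$, which by Corollary~\ref{zerlres}(a) acts as an isomorphism of $\bild\tC^{[k]}$. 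All further contributions lie in strictly smaller fiber degree (for $k\ge f/2$) or inside the block of degrees $<f/2$. Ordering the fiber degrees from $f$ down to $0$ and subtracting previously constructed classes, a triangular back-substitution then produces, for every $v\in\mathcal{A}^p$, a closed form $\theta$ with $r[\theta]=v$.

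The opposite inclusion I would obtain by a dimension count rather than by re-running the asymptotic analysis. Because $X_0$ is compact oriented of dimension $n+1$ with boundary $M$, the ``half lives, half dies'' consequence of Poincaré–Lefschetz duality shows that $\bild(r\colon H^p(X)\to H^p(M))$ and $\bild(r\colon H^{n-p}(X)\to H^{n-p}(M))$ are mutual annihilators under the cup-product pairing on $H^*(M)$, so that $\dim\bild(r)^{p}+\dim\bild(r)^{n-p}=\dim H^p(M)$. On the other side, the Hodge star on $M$ sends the fiber-degree $k$ block to the fiber-degree $f-k$ block, interchanging $\ker\tC^{[k]}$ with its orthogonal complement $\bild\tC^{[k]}$ from Corollary~\ref{zerlres}(a), while in the middle degree Lemma~\ref{hodgeiso} gives $\astm\colon\H_\pm^p\cong\H_\mp^{n-p}$. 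Tracking base and fiber degrees through $\astm$, one finds $\dim\mathcal{A}^{(p-k,k)}+\dim\mathcal{A}^{(n-p-(f-k),f-k)}=\dim\H^{p-k,k}(M)$ for each $k$, and summing yields $\dim\mathcal{A}^p+\dim\mathcal{A}^{n-p}=\dim H^p(M)$. Combining this with the inclusion already proved gives $\dim H^p(M)=\dim\mathcal{A}^p+\dim\mathcal{A}^{n-p}\le\dim\bild(r)^p+\dim\bild(r)^{n-p}=\dim H^p(M)$, so both inequalities are equalities; with $\mathfrak{h}(\mathcal{A}^p)\subseteq\bild(r)$ this forces $\bild(r)=\mathfrak{h}(\mathcal{A}^p)$.

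The main obstacle I expect lies in the first inclusion: the formulas \eqref{einsch1}--\eqref{einsch3} carry off-diagonal scattering terms $T_{00}^{[l]}(2d_k,\phi)$ mixing distinct fiber degrees, and the triangular cancellation only closes once one knows these corrections again lie in $\mathcal{A}^p$, i.e.\ that $\bild(r\circ\Xi)\subseteq\mathfrak{h}(\mathcal{A}^p)$. Establishing this, together with ensuring that every class in $\bild(r)$ is represented by a singular value so that no dimension is lost in the squeeze, is where the scattering input from Propositions~\ref{satz47} and~\ref{satz50} and the functional equations of Corollary~\ref{funceq} must be applied most carefully.
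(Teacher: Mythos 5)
Your second step is sound: the ``half lives, half dies'' consequence of Poincar\'e--Lefschetz duality for $(X_0,M)$ does give $\dim\bild(r)^p+\dim\bild(r)^{n-p}=\dim H^p(M)$, your computation $\dim\mathcal{A}^p+\dim\mathcal{A}^{n-p}=\dim H^p(M)$ is correct (using Corollary \ref{zerlres}(a), Lemma \ref{hodgeiso} and Proposition \ref{satzspec}), and the squeeze would finish the proof \emph{if} the inclusion $\mathfrak{h}(\mathcal{A}^p)\subseteq\bild(r)$ were in hand. This duality input is essentially the same one the paper uses, only packaged globally as a dimension count instead of as the $q$-isotropy of $\bild(r)$ applied bidegree by bidegree.

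The genuine gap is in the first inclusion, and it is exactly where you suspected -- but it is not repairable by a ``triangular back-substitution''. Two problems. (i) Within the block of fiber degrees $l<f/2$ there is no triangular order at all: by \eqref{einsch1} the restriction of $\tE(\phi)$, $\phi\in\bild\tC^{[k]}$, is $[\sum_{l<f/2}\tC^{[l]}(\phi)]$, which mixes \emph{all} degrees $l<f/2$ simultaneously. To realize a prescribed $v\in\bigoplus_{l<f/2}\bild\tC^{[l]}$ you must show that the full map $\phi\mapsto\sum_{l<f/2}\tC^{[l]}(\phi)$ is onto $\bigoplus_{l<f/2}\bild\tC^{[l]}$; only its diagonal blocks $\tC^{[k]}|_{\bild\tC^{[k]}}$ are controlled by Corollary \ref{zerlres}, and nothing bounds the off-diagonal blocks. (ii) For $k\ge f/2$ the corrections $T_{00}^{[l]}(2d_k,\phi)$, $l\le f/2$, are not known to lie in $\mathcal{A}$: e.g.\ if $T_{00}^{[f/2]}(2d_k,\phi)$ had a component in $\H_-$, Proposition \ref{satz50} says $E(0,\cdot)$ vanishes there, so no singular value could cancel it, and your back-substitution could not close; that these components vanish is essentially equivalent to the theorem you are proving, so it cannot be assumed. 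Note also that even granting injectivity of $r\circ\Xi$ on $\mathcal{A}^p$, the squeeze would only yield $\bild(r)=(r\circ\Xi)(\mathcal{A}^p)$, not $\bild(r)=\mathfrak{h}(\mathcal{A}^p)$. The paper circumvents all of this by never proving the subspace inclusion directly: it studies the \emph{projections} $R^{(j,k)}$ of $\bild(r)$ onto each bidegree, observes from the diagonal term of \eqref{einsch1}--\eqref{einsch3} that $\mathfrak{h}(\mathcal{A}^{(j,k)})\subseteq R^{(j,k)}$, and then uses the isotropy of $\bild(r)$ under the intersection form $q([\phi],[\psi])=\int_M\phi\wedge\psi$ (Stokes on $X_0$) to show $R^{(j,k)}$ meets $\mathfrak{h}(\ker\tC^{[k]})$, resp.\ $\mathfrak{h}(\H_-)$, only in $0$ -- the key step being that for $v\in\ker\tC^{[k]}$ with $[v]\in R^{(j,k)}$ one also has $[\astm v]\in R^{(n-f-j,f-k)}$ by \eqref{einsch3}, whence $\|v\|^2=\pm q([\astm v],[v])=0$. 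You should replace your first step by this projection-plus-isotropy argument (or supply an independent proof that the off-diagonal scattering residues respect the decomposition of Corollary \ref{zerlres}, which the paper does not provide and which does not follow from the functional equations alone).
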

\begin{proof}
Let $R^p\defgleich\bild (r: H^p(X)\to H^p(M))$ and $R^{(j,k)}\defgleich\bild\big(r^{(j,k)}: H^{j+k}(X)\to H^{(j,k)}(M)\big)$ be the image of the restriction map $r$, projected onto $H^{(j,k)}(M)\defgleich H^j(B,\H^k(F))$.
From Stokes' theorem and the Hodge decomposition on the closed manifold $M$ it follows that 
\begin{equation}
 q([\phi], [\psi])=\int_M \phi\wedge\psi= (\astm \phi, \psi)_{L^2\Omega^{p}(M)}\label{qformel}
\end{equation}
defines a non-degenerate bilinear form  $q: H^{(a,k)}(M)\times H^{(n-a-f,f-k)}(M)$ such that
$R^{(a,k)}$  and  $R^{(n-f-a,f-k)}$ are $q-$orthogonal.

We claim
\begin{equation} \label{step3}
R^{(*,k)}=\mathfrak{h}(\mathcal{A}^{(*,k)}).
\end{equation}

To prove \eqref{step3}, first let $k<f/2$.
From \eqref{einsch1} we get $\mathfrak{h}(\bild\tC^{[k]})\subset R^{(j,k)}$, so that
\[
 \bild\tC^{[k]}\subset \mathfrak{h}^{-1}(R^{(j,k)}) \subset \H^{j,k}(M)=\bild\tC^{[k]}\oplus \ker\tC^{[k]}
\]
by Corollary \ref{zerlres}. To show
\begin{equation}
 \mathfrak{h}(\bild\tC^{[k]})=R^{(j,k)},\qquad k<f/2,\label{pres2}
\end{equation}
it is sufficient to prove $\mathfrak{h}^{-1}(R^{(j,k)})\cap \ker\tC^{[k]}=\{0\}$.
For $v\in \ker\tC^{[k]}$, from \eqref{einsch3}
\[
 R^{(n-f-j,f-k)}\ni (r^{n-f-j,f-k}\circ\Xi)([\astm v])=[\astm v],\qquad k<f/2.
\]
This shows 
\begin{equation}
\mathfrak{h}(\astm  \ker\tC^{[k]})\subset  R^{(\cdot,f-k)},\qquad k<f/2.\label{pres1a}
\end{equation}
In particular if $[v]\in R^{(j,k)}$ with $v\in \ker\tC^{[k]}$, then $\|v\|^2=q([\astm v],[v])=0$  and
 \eqref{pres2} follows.

Now let $v\in \H^{*,f-k}(M)$ with  $[v]\in  R^{(\cdot,f-k)}$. From \eqref{pres2} and \eqref{qformel}
\[
 [v]\in (R^{(\cdot,k)})^{\perp_q}=\mathfrak{h}((\bild\tC^{[k]})^{\perp_q}) \Longrightarrow \astm v\in (\bild\tC^{[k]})^\perp=\ker\tC^{[k]},
\]
so that
\begin{equation}
 R^{(\cdot,f-k)}\subset \mathfrak{h}(\astm \ker{\tC}^{[k]}),\qquad k<f/2.\label{pres1b}
\end{equation}

Now let $k=f/2$.  From \eqref{einsch2} and \eqref{splitf2}
\begin{equation*}
H_+^{(j,f/2)}\subset R^{(j,f/2)} \subset H^{(j,f/2)}=H_+^{(j,f/2)}\oplus H_-^{(j,f/2)}
\end{equation*}
But $\astm: \H_-^{(j,f/2)}\to \H_+^{(n-j-f,f/2)}$ is an isomorphism due to Lemma \ref{hodgeiso}.
Then
\[
[v]\in H_-^{(j,f/2)}\cap R^{(j,f/2)} \Rightarrow [\astm v]\in H_+^{(n-f-j,f/2)}\subset  R^{(n-f-j,f/2)} \subset (R^{(j,f/2)})^{\perp_q}
\]
Again $[v]=0$ and thus
\begin{equation}
 H_+^{(j,f/2)}(M)=R^{(j,f/2)}.\label{pres3}
\end{equation}
Equations \eqref{pres1a}, \eqref{pres1b}, \eqref{pres2} and \eqref{pres3} prove \eqref{step3} and the theorem.
\end{proof}

An example is given by the case of a manifold with cylindrical end, that is the metric on $Z$ is $du^2+g^M$. Then the fibers are points, and Theorem \ref{thm2} shows
\[
 \im\big(H^p(X)\to H^p(M)\big)=\H^p_+.
\]
This is Theorem 3.1 in \cite{MuStro}.

The main result of this article is
\begin{xthm}\label{thm1}

Let $H_!^p(X)\defgleich\bild (H_c^p(X)\to H^p(X))$ be the image of cohomology with compact support in  the de~Rham-cohomology.
Let $H_{\mathrm{inf}}^p(X)$ be a complementary space to $H_!^p(X)$ in $H^p(X)$,\index{$H_{\rm inf}^p(X)$}\index{$H_{\shriek}^p(X)$}
\[
H^p(X)=H_!^p(X)\oplus H_{\mathrm{inf}}^p(X).
\]
Let $R^p\defgleich\bild (r: H^p(X)\to H^p(M))$.
Then $\Xi(R^p)$ is isomorphic to $H_{\text{\upshape{inf}}}^p(X)$
and\/ $\Xi(H^p(M))=\Xi(R^p).$
\end{xthm}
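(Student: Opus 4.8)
The plan is to split the assertion into the easy identity $\Xi(H^p(M))=\Xi(R^p)$ and the isomorphism $\Xi(R^p)\cong H_{\mathrm{inf}}^p(X)$, and to reduce the latter to two facts: (i) $\ker\bigl(r\colon H^p(X)\to H^p(M)\bigr)=H_!^p(X)$, and (ii) the endomorphism $P\defgleich r\circ\Xi|_{R^p}$ of $R^p$ is an isomorphism. The identity $\Xi(H^p(M))=\Xi(R^p)$ is immediate: by its very definition $\Xi$ vanishes on a complement of $\mathcal A^p$ in $\H^p(M)$, while $R^p=\mathfrak h(\mathcal A^p)$ by Theorem \ref{thm2}, so $\Xi(H^p(M))=\Xi(\mathfrak h(\mathcal A^p))=\Xi(R^p)$. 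Granting (i) and (ii), injectivity of $P$ forces $\Xi|_{R^p}$ to be injective, so $\dim\Xi(R^p)=\dim R^p$ and $\Xi(R^p)\cap\ker r=0$; together with $\dim R^p=\dim H^p(X)-\dim H_!^p(X)$ from (i) this yields $H^p(X)=H_!^p(X)\oplus\Xi(R^p)$, exhibiting $\Xi(R^p)$ as a complement of $H_!^p(X)$ and hence as isomorphic to $H_{\mathrm{inf}}^p(X)$.

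For (i) the inclusion $H_!^p(X)\subseteq\ker r$ follows at once from Lemma \ref{randrham}, since a compactly supported representative is rapidly decreasing on $Z$. For the reverse inclusion I would invoke the long exact cohomology sequence of the pair $(X_r,M)$: attaching the infinite collar $Z_r=[r,\infty)\times M$ identifies $X$ with the interior of the compact manifold $X_r$ with boundary $M$, so that $H^p(X)\cong H^p(X_r)$ and $H_c^p(X)\cong H^p(X_r,M)$ compatibly with the natural maps. Exactness of $H^p(X_r,M)\to H^p(X_r)\to H^p(M)$ then gives $\ker r=\bild\bigl(H^p(X_r,M)\to H^p(X_r)\bigr)=H_!^p(X)$.

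The heart of the matter is (ii). Using the restriction formulas \eqref{einsch1}--\eqref{einsch3} and grading $\H^p(M)=\bigoplus_k\H^{p-k,k}(M)$ by fibre degree, I read off that, with the blocks ordered as $k>f/2$, $k=f/2$, $k<f/2$, the operator $P$ is block lower triangular: a class of degree $k>f/2$ is sent to itself with coefficient $1$ plus contributions in degrees $\le f/2$, a class of degree $f/2$ to itself with coefficient $2$ plus degrees $<f/2$, and a class of degree $k<f/2$ stays among degrees $<f/2$. The top two diagonal blocks are $\mathrm{id}$ and $2\,\mathrm{id}$, hence invertible, so $P$ is an isomorphism exactly when the bottom block $A\colon\phi\mapsto\sum_{l<f/2}\bigl[\tC^{[l]}(\phi)\bigr]$ on $\bigoplus_{k<f/2}\bild\tC^{[k]}$ is. The decisive point is that $A$ is itself lower triangular in fibre degree with invertible diagonal: for $\phi\in\bild\tC^{[k]}$ and $k<m<f/2$ the cross residue $\tC^{[m]}(\phi)=\res_{2d_k}T_{00}^{[m]}(\cdot,\phi)$ vanishes, while the diagonal entry is $\tC^{[k]}|_{\bild\tC^{[k]}}$. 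I would obtain the vanishing from the cross-degree form of the Maa{\ss}--Selberg symmetry behind Lemma \ref{lemma3}, namely $\langle T_{00}^{[m]}(\bar s,\phi),\psi\rangle=\langle\phi,T_{00}^{[k]}(s,\psi)\rangle$ for $\psi\in\H^{*,m}(M)$: when $m>k$ (both $<f/2$) one has $d_m<d_k$, so the poles of $T_{00}^{[k]}(\cdot,\psi)$, which lie in $(d_m,2d_m]$ by Proposition \ref{polposition}, avoid the point $2d_k>2d_m$; the right-hand side is therefore holomorphic at $s=2d_k$, and taking the residue there forces $\langle\tC^{[m]}(\phi),\psi\rangle=0$ for all such $\psi$. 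Since the diagonal operators $\tC^{[k]}|_{\bild\tC^{[k]}}$ are self-adjoint, positive and injective on their images by Corollary \ref{zerlres}, hence invertible, $A$ and with it $P$ is an isomorphism.

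The main obstacle is precisely this triangularity in (ii), that is, the vanishing of the cross-degree residues $\tC^{[m]}(\phi)$ for $m>k$; it is here that the Maa{\ss}--Selberg relations and the exact location of the poles of the scattering operators (Proposition \ref{polposition}) must be combined, and where the dual pairing $k\leftrightarrow f-k$ of equal $d$-values has to be separated out — it is exactly the surviving dual degree $f-k$ together with the middle degree $f/2$ that produces the $\mathrm{id}$ and $2\,\mathrm{id}$ diagonal blocks of $P$. The topological identification (i) is routine by comparison. Assembling the three steps establishes both $\Xi(H^p(M))=\Xi(R^p)$ and $\Xi(R^p)\cong H_{\mathrm{inf}}^p(X)$.
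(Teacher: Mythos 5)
Your overall architecture coincides with the paper's: the identity $\Xi(H^p(M))=\Xi(R^p)$ from the definition of $\Xi$ together with Theorem \ref{thm2}, the identification $\ker r=H_!^p(X)$ via $H_c^p(X)\cong H^p(X,M)$ and exactness, and the reduction of the isomorphism to showing that $P=r\circ\Xi$ is an automorphism of $R^p=\mathfrak{h}(\mathcal{A}^p)$; the block structure you extract from \eqref{einsch1}--\eqref{einsch3} (diagonal $\mathrm{id}$ resp.\ $2\cdot\mathrm{id}$ on the blocks $k>f/2$ and $k=f/2$, everything else landing in degrees $<f/2$) is exactly what the paper uses implicitly.

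The gap is in your treatment of the bottom block $A$. You claim that for $\phi\in\bild\tC^{[k]}$ and $k<m<f/2$ the cross residue $\tC^{[m]}(\phi)=\res_{2d_k}T_{00}^{[m]}(\cdot,\phi)$ vanishes, and you derive this from an identity $\sce{T_{00}^{[m]}(\bar s,\phi)}{\psi}=\sce{\phi}{T_{00}^{[k]}(s,\psi)}$ in which the \emph{same} value of $s$ appears on both sides. But $E(\cdot,\phi)$ and $E(\cdot,\psi)$ are parametrized by different maps $s\mapsto s(2d_k-s)$ resp.\ $s\mapsto s(2d_m-s)$; the Maa{\ss}--Selberg relation \eqref{g3} pairs points $\Lambda_1,\Lambda_2$ of the spectral surface with $\pi_s(\Lambda_1)=\overline{\pi_s(\Lambda_2)}$, so the correct cross-degree version relates $\hat s$ and $s$ through $\hat s(2d_k-\hat s)=\overline{s(2d_m-s)}$, not $\hat s=\bar s$. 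The point at which you take the residue is $\lambda=0$, which on the right-hand side is $s=2d_m$ --- and $2d_m$ lies precisely in the interval $(d_m,2d_m]$ where Proposition \ref{polposition} permits a first-order pole of $T_{00}(\cdot,\psi)$. Hence the right-hand side need not be holomorphic there; what the relation actually yields is a symmetry $\sce{\tC^{[m]}(\phi)}{\psi}=\sce{\phi}{\tC^{[k]}(\psi)}$ between the off-diagonal blocks, not their vanishing. (The $L^2$-condition on $\tE(\phi)$ from Corollary \ref{zerlres} kills $\tC^{[l]}(\phi)$ only when $d_l\ge 2d_k$, not for all $l\neq k$.) So the strict triangularity of $A$, and with it the invertibility of $P$, is unproven. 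A repair consistent with the paper's machinery uses positivity instead of triangularity: a cross-degree extension of Proposition \ref{residuen} gives $\sce{A\phi}{\phi}=\|\tE(\phi)\|^2$ for $\phi=\sum_{k<f/2}\phi_k$ with $\phi_k\in\bild\tC^{[k]}$, so $A\phi=0$ forces $\tE(\phi)=0$; comparing the mutually distinct exponents $e^{(2d_l-2d_k)u}$ in the constant term of $\sum_k\tE(\phi_k)$ then gives $\tC^{[k]}(\phi_k)=0$ for each $k$, whence $\phi_k=0$ by Corollary \ref{zerlres}. (Note that the paper itself only verifies injectivity of $r\circ\Xi$ on homogeneous $\phi$, so an argument of this kind is needed in either case.)
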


\begin{proof}
By the definition of relative de~Rham-cohomology we have 
\[
\ker (r: H^p(X)\to H^p(M))\cong\bild (H^p(X,M)\to H^p(X))
\]
 and it is well known that $H_c^p(X)$  is isomorphic to $H^p(X,M)$. 
This implies that 
\begin{equation}
r: H_{\text{inf}}^p(X) \to \bild (r: H^p(X)\to H^p(M))
\end{equation}
is an isomorphism.

We  show that  $r\circ\Xi : R^p \to R^p $ is an isomorphism, then
\[
 (r|_{\bild r})^{-1}\circ r :\Xi(R^p)\xrightarrow[]{\simeq} H_{\text{inf}}^p(X)
\]
gives the desired isomorphism.

First, from equations \eqref{einsch1}, \eqref{einsch2} and \eqref{einsch3} we obtain
\[
 (r^{(p-k,k)}\circ \Xi)(\mathcal{A}^{(p-k,k)})\subset \mathcal{A}^{(p-k,k)},
\]
and altogether $r\circ \Xi(\mathcal{A})\subset \mathcal{A}$.

Next we show that $r\circ \Xi$ is injective on $\mathcal{A}$.
Let $\phi\in \H^{p-k,k}(M)$ and $r(\Xi([\phi]))=0$. For $k\ge f/2$, from  \eqref{einsch2} and \eqref{einsch3} it follows that $[\phi]=0$. 
If otherwise $k<f/2$ and $\phi\in \bild{{\tC}^{[k]}}$, then by \eqref{einsch1}
\[
 r(\Xi([\phi]))=0 \Longrightarrow {\tC}^{[k]}(\phi)=0.
\]
Then Corollary \ref{zerlres} shows  $\phi=0$.

%

From Theorem \ref{thm2} we get $R^p=\mathcal{A}$ and hence $r\circ\Xi:R^p\to R^p$ is an isomorphism. Furthermore this shows the second claim of the theorem, as
$
 \Xi(H^p(M))= \Xi(\mathcal{A})
$
by definition of $\Xi$.
\end{proof}

Singular values are harmonic, so every class in $H_{\text{\upshape{inf}}}^p(X)$ has a harmonic representative.
As noted in section \ref{kohom}, every class in $H^p_!(X)$ has a unique $L^2-$harmonic representative, because $X$ is a complete Riemannian manifold. 
This leads to
\begin{cor}\label{co1}
Every class in  $H_{\text{\upshape{inf}}}^p(X)$ has a  representative in the singular values.
Every class in $H^p(X)$ has a harmonic representative.
\end{cor}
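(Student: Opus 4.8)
The plan is to derive the corollary directly from Theorem \ref{thm1}, together with the fact recalled in section \ref{kohom} that every class in $H_!^p(X)$ possesses a unique $L^2$-harmonic representative. The key observation is that the subspace $\Xi(R^p)\subset H^p(X)$ may itself be taken as the complement $H_{\mathrm{inf}}^p(X)$, so that both direct summands in \eqref{thsplit} come equipped with harmonic representatives of a preferred kind.

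First I would record what the proof of Theorem \ref{thm1} supplies: the restriction map restricts to an isomorphism $r\colon \Xi(R^p)\xrightarrow{\simeq} R^p$, while $\ker(r\colon H^p(X)\to H^p(M))\cong \bild(H^p(X,M)\to H^p(X))=H_!^p(X)$. From the first fact $r$ is injective on $\Xi(R^p)$, so any class in $\Xi(R^p)\cap H_!^p(X)=\Xi(R^p)\cap\ker r$ must vanish; hence $\Xi(R^p)\cap H_!^p(X)=0$. Since $r$ also maps $H_{\mathrm{inf}}^p(X)$ isomorphically onto $R^p$, we have $\dim\Xi(R^p)=\dim R^p=\dim H_{\mathrm{inf}}^p(X)$, and a dimension count against $\dim H^p(X)=\dim H_!^p(X)+\dim H_{\mathrm{inf}}^p(X)$ then gives $H^p(X)=H_!^p(X)\oplus\Xi(R^p)$. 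Thus we are free to fix the complement as $H_{\mathrm{inf}}^p(X)\defgleich\Xi(R^p)$.

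With this choice the first assertion is immediate: every class in $H_{\mathrm{inf}}^p(X)=\Xi(R^p)$ is of the form $[\Xi(\phi)]$ for some $\phi\in\H^p(M)$, and by Corollary \ref{zerlres} and Propositions \ref{satz47} and \ref{satz49} each singular value $\Xi(\phi)$ is a closed harmonic form. For the second assertion I would decompose an arbitrary class $c\in H^p(X)$ as $c=c_!+c_{\mathrm{inf}}$ with $c_!\in H_!^p(X)$ and $c_{\mathrm{inf}}\in\Xi(R^p)$; here $c_!$ is represented by its $L^2$-harmonic representative and $c_{\mathrm{inf}}$ by a singular value. Since both representatives are closed and satisfy $\Delta=0$, their sum is again a closed harmonic form representing $c$, by linearity of $d$ and $\Delta$.

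I expect no serious obstacle, as the analytic content has already been carried out in Theorem \ref{thm1} and in the construction of $\Xi$; the corollary is essentially a bookkeeping consequence of the direct-sum decomposition. The only point requiring care is to confirm that each of the two summands is genuinely closed — the $L^2$-harmonic forms because $\scr{\Delta\omega}{\omega}=\|d\omega\|^2+\|\delta\omega\|^2=0$ forces $d\omega=0$, and the singular values because they were constructed precisely to represent de~Rham classes — so that the sum really represents $c$ and not merely some harmonic form.
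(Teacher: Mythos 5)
Your proposal is correct and follows essentially the same route as the paper, which derives the corollary from Theorem \ref{thm1}, the harmonicity of singular values, and the existence of unique $L^2$-harmonic representatives for classes in $H_!^p(X)$. The one thing you add beyond the paper's terse derivation is the explicit verification (via $\Xi(R^p)\cap\ker r=0$ and a dimension count) that $\Xi(R^p)$ may itself be taken as the complement $H_{\mathrm{inf}}^p(X)$, which is implicit in the paper and is the correct reading of the first assertion.
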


Unlike classes in $H_!^p(X)$, in general the classes in $H_{\text{inf}}^p(X)$ do not belong to $L^2\Omega^p(X)$, unless they are represented by residues.
Because of Lemma \ref{randrham}, closed cusp forms are in $\ker r$. 
Therefore we have a further decomposition
\[
 H_!^p(X)=H_0^p(X)\oplus H_{\text{Eis}}^p(X)
\]
Here by definition harmonic representatives of $H_0^p(X)$ are rapidly decreasing on $Z$ and thus orthogonal to fiber harmonic forms. In \cite[\S 2]{harder2}  it is shown that $H_{\text{Eis}}^p(X)$ generally is not empty.

Finally we emphasize that the decomposition
$
 H^p(X)=H_!^p(X)\oplus H_{\mathrm{inf}}^p(X)
$
does not give rise to a corresponding decomposition of  the $L^2-$cohomology  $H_{(2)}(X)$, since there are classes in $H^p_{(2)}(X)$, which are not in $H^p(X)$.
\subsection{A Signature Formula}
Theorem \ref{thm1} can be applied to compute the $L^2-$signature of $X$ as follows. Let $N=\dim X=4 l$.
Then $L^2-\sign(X)$ is the signature defined by  $H^{2l}_{(2),\text{red}}(X)\cong \H_{(2)}^{2l}(X)$, i.e. it equals the signature of the quadratic form
\[
 Q(\phi,\psi)=\int_X \phi\wedge\psi,\quad \phi,\psi\in \H_{(2)}^{2 l}(X).
\]
Similarily let $\sign_c(X)=\sign(X_0,\partial X_0)$ be the signature of $Q$ on $H_!(X)$.
Since every class in $H_!(X)$ has a unique $L^2-$harmonic representative, we get an isomorphism between $H_!(X)$  and the space $\H_!(X)$  spanned by all $L^2-$harmonic representatives of $H_!(X)$. This shows  that $\sign(X_0,\partial X_0)$ equals the signature of $Q$ on $\H_!^{2l}(X)$.

Now lets recall how  $L^2-\sign(X)$ can be computed.
The operator
\[
 \tau_X\defgleich i^{p(p-1)+2 l}\ast : \H_{(2)}^p(X)\to \H_{(2)}^{N-p}(X)
\]
is an involution on $\H_{(2)}^{2l}(X)$.
Let $\H_{\pm}^{2l}(X)$ be the $\pm 1$-Eigenspaces of $\tau_X=\ast$ on $\H_{(2)}^{2l}(X)$. 
Then
\[
 L^2-\sign(X)=\dim \H_{+}^{2l}(X)-\dim \H_{-}^{2l}(X).
\]

\begin{satz}\label{gleichesign}
Under the conditions (A) and (B)  for the manifold $X$ with fibered cusp metric the equality
\[
L^2-\sign(X)= \sign(X_0,\partial X_0)
\]
holds.
\end{satz}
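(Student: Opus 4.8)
The plan is to compute the signature of $Q(\phi,\psi)=\int_X\phi\wedge\psi$ on the middle-degree $L^2$-harmonic forms $\H_{(2)}^{2l}(X)$ by peeling off a residual part on which the signature vanishes. On middle-degree forms the involution is $\tau_X=i^{2l(2l-1)+2l}\ast=\ast$, and $Q(\phi,\psi)=(\ast\phi,\psi)_{L^2}$ as in \eqref{qformel}; hence on any $\ast$-invariant subspace $V$ the signature of $Q$ equals $\dim V_+-\dim V_-$, the difference of dimensions of the $(\pm1)$-eigenspaces of $\ast$ on $V$ (the eigenspaces are $L^2$-orthogonal, $Q$ is positive on $V_+$ and negative on $V_-$). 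The argument rests on the orthogonal decomposition
\[
\H_{(2)}^{2l}(X)=\H_!^{2l}(X)\oplus\mathcal{R}^{2l},
\]
where $\H_!^{2l}(X)$ is the space of (unique) $L^2$-harmonic representatives of $H_!^{2l}(X)$ and $\mathcal{R}^{2l}$ is spanned by the residues $\tE(\psi)$ and $\tE(du\wedge\psi)$ produced in Corollary \ref{zerlres}. Establishing that the orthogonal complement of $\H_!^{2l}$ inside the $L^2$-harmonic forms is exhausted precisely by these residues is the structural heart of the proof: via the constant-term map of Section \ref{kasym} one shows that an $L^2$-harmonic form is either rapidly decreasing on $Z$, hence orthogonal to the fibre-harmonic forms and thus lying in $H_0^{2l}\subset\H_!^{2l}$, or else its fibre-harmonic part agrees asymptotically with the constant term of a residue. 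I expect this step to be the main obstacle, as it requires the full spectral picture of $\Delta_X$ near $\lambda=0$ developed in Sections \ref{kstreu}--\ref{keif}.

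Granting the decomposition, I treat $\mathcal{R}^{2l}$. Write $\mathcal{R}^{2l}=\mathcal{R}'\oplus\mathcal{R}''$, where $\mathcal{R}'$ is the span of the residues $\tE(\psi)$ with no $du$-component in their leading constant term and $\mathcal{R}''$ the span of the $\tE(du\wedge\psi)$; the two are linearly independent because their constant terms differ in their $du$-parts. Taking residues at $s=2d_k$ in \eqref{duformel} (recall $d_{f-k}=d_k$, so both sides have a residue at the same point) gives $\ast\,\tE(du\wedge\psi)=\tE(\astm\psi)$, which lies in $\mathcal{R}'$. Substituting $\astm\psi$ for $\psi$ and using $\astm\astm=\mathrm{id}$ and $\ast\ast=\mathrm{id}$ on these middle-degree forms yields $\ast\,\tE(\psi)=\tE(du\wedge\astm\psi)\in\mathcal{R}''$. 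Since $\astm:\H^{*,k}(M)\to\H^{*,f-k}(M)$ maps the residue supply to itself, and since by Proposition \ref{polposition} there is no residue in the middle fibre degree $k=f/2$ (so that no residue can be its own $\ast$-image), these relations show that $\ast$ exchanges $\mathcal{R}'$ and $\mathcal{R}''$. Thus $\ast$ is a genuine swap of two complementary subspaces of equal dimension: for $v\in\mathcal{R}'$ the vectors $v\pm\ast v$ are $(\pm1)$-eigenvectors, so $\dim\mathcal{R}_+=\dim\mathcal{R}_-=\dim\mathcal{R}'$, and the signature of $Q$ on $\mathcal{R}^{2l}$ vanishes. This realises the commutation relation $\tau_X\tE(\omega)=\tE(\tau_Z\omega)$ announced in the introduction, with $\tau_Z$ the boundary involution sending $\psi$ to $du\wedge\astm\psi$.

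Finally I assemble the pieces. Because $\mathcal{R}^{2l}$ is $\ast$-invariant and the decomposition above is $L^2$-orthogonal, the complement $\H_!^{2l}=(\mathcal{R}^{2l})^\perp\cap\H_{(2)}^{2l}$ is $\ast$-invariant as well, as the self-adjoint involution $\ast$ preserves the orthogonal complement of any invariant subspace. Hence $Q$ is block-diagonal for this decomposition and
\[
L^2\text{-}\sign(X)=\sign\big(Q|_{\H_!^{2l}}\big)+\sign\big(Q|_{\mathcal{R}^{2l}}\big)=\sign\big(Q|_{\H_!^{2l}}\big).
\]
As noted before the statement, every class in $H_!^{2l}(X)$ has a unique $L^2$-harmonic representative and the induced isomorphism $H_!^{2l}(X)\cong\H_!^{2l}(X)$ is compatible with $Q$, so $\sign(Q|_{\H_!^{2l}})=\sign(X_0,\partial X_0)$. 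This yields $L^2\text{-}\sign(X)=\sign(X_0,\partial X_0)$, as claimed.
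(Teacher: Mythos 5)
Your proposal reproduces the paper's strategy in outline, and your treatment of the residue space (the swap relation $\tau_X\tE(\omega)=\tE(\tau_Z\omega)$ obtained from \eqref{duformel} and \eqref{deformel}, and the resulting equality $\dim\mathcal{R}_+=\dim\mathcal{R}_-$) matches the Lemma and Lemma \ref{gleichdim} in the paper's proof. But the argument has a genuine gap exactly where you flag one: the decomposition $\H_{(2)}^{2l}(X)=\H_!^{2l}(X)\oplus\mathcal{R}^{2l}$ is \emph{assumed} (``Granting the decomposition\ldots''), and the sketch you offer for it --- that an $L^2$-harmonic form is either rapidly decreasing or has constant term asymptotically matching that of a residue --- is not how this can be established and would not go through as stated: nothing in the spectral analysis of Sections \ref{kstreu}--\ref{keif} directly matches constant terms of arbitrary $L^2$-harmonic forms to constant terms of residues. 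The actual mechanism is cohomological. One first observes that for $\psi\in\H_{(2)}^{2l}(X)$ the square-integrability of $e^{-\ua u}\psi|_Z$ forces $r[\psi^{[k]}]=0$ for all fiber degrees $k\ge f/2$, so $r[\psi]\in\bigoplus_{k<f/2}H^{*,k}(M)$; then Theorem \ref{thm1} (via the explicit description $R^p=\mathfrak{h}(\mathcal{A}^p)$ of Theorem \ref{thm2} and the injectivity of $r\circ\Xi$ on $\mathcal{A}$) produces a \emph{unique} $\phi_0\in\bigoplus_{k<f/2}\bild\tC^{[k]}$ with $r[\tE(\phi_0)]=r[\psi]$, and $\psi_!=\psi-\tE(\phi_0)$ is an $L^2$-harmonic form with $r[\psi_!]=0$, hence in $\H_!(X)$. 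This is the structural heart of the proof and it rests on the Hodge-type theorem, not on asymptotics of the resolvent; omitting it leaves the proposition unproved.

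A secondary point: you assert the decomposition is $L^2$-orthogonal and use orthogonality to conclude that $\H_!^{2l}$ is $\ast$-invariant and that $Q$ is block-diagonal. The paper establishes only a direct sum of vector spaces (the map $\rho$), not orthogonality, and there is no reason to expect $\tE(\phi_0)\perp\H_!$ in $L^2$. The $\ast$-invariance of $\H_!^{2l}$ is instead deduced from the facts that $\H_{(2)}^{2l}$ and $W=W_+\oplus_Q W_-$ are both $\ast$-invariant together with $\H_{(2)}^{2l}=W\oplus\H_!$; and the vanishing of the contribution of $W$ to the signature follows from $\dim W_+=\dim W_-$ with $W_\pm\subset\H_\pm^{2l}(X)$, without any appeal to $L^2$-orthogonality of $W$ against $\H_!$. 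You should either prove the orthogonality you invoke or rework the final bookkeeping along these lines.
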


\begin{bem}
The results of \cite{dai} and \cite{vaill} show
\[
 L^2-\sign(X)=\sign(X_0,\partial X_0)+\tau
\]
where $\tau$ is a topological invariant of the spectral sequence of $M\to B$. Under the conditions (A) and (B) one can conclude as in Dai's paper that $\tau=0$. 
Instead of using this result, in the following proof the difference of the signatures under consideration will be calculated directly. In this way also additional information about singular values is obtained.
\end{bem}

\begin{proof}
Let
\[
 \hat\H^p(Z)=\H^p(M)\oplus du\wedge \astm \H^p(M).
\]
Define $\tau_Z \psi\defgleich i^{p(p-1)+2 l}\astz \psi$ for the  Hodge-Star-Operator $\astz$ on $Z=\Reell^+\times M$ equipped with the metric $du^2+g^M$.
For each $\omega\in\hat\H^p(Z)$ an $L^2-$harmonic form is defined by
\[
 \tE(\omega) =\text{res}_{s=2\ud(\omega)} E(s,\omega)\defgleich\sum_k\text{res}_{s=2 d_k} E(s,\omega^{[k]}).
\]

\begin{lem}
$\tau$ commutes with the construction of $\tE$,
\begin{equation}
 \tau_X \tE(\omega) = \tE(\tau_Z\omega)\label{tauswap} 
\end{equation}
\end{lem}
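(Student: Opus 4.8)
The plan is to reduce the identity $\tau_X \tE(\omega) = \tE(\tau_Z\omega)$ to an unnormalized Hodge-star identity on generalized eigenforms and then pass to residues. Since $N=\dim X=4l$ and we work throughout in the single middle degree $p=2l$, both $\tau_X$ and $\tau_Z$ carry the same scalar $i^{p(p-1)+2l}$ evaluated at the same degree $p$, while $\tE$ is degree-preserving from $\hat\H^p(Z)$ to $\H^p_{(2)}(X)$. Hence, using linearity of $\tE$, it suffices to prove
\[
\ast\,\tE(\omega) = \tE(\astz\omega), \qquad \omega\in\hat\H^p(Z),
\]
from which $\tau_X\tE(\omega)=i^{p(p-1)+2l}\ast\tE(\omega)=i^{p(p-1)+2l}\tE(\astz\omega)=\tE(\tau_Z\omega)$ follows immediately.

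Next I would lift this to the level of eigenforms before taking residues. Since $\ast$ (the Hodge star on $X$) is a fixed, $\Lambda$-independent linear operator, it commutes with $\res$, so the displayed identity follows once I establish the meromorphic identity
\[
\ast E(\Lambda,\eta) = E(\Lambda,\astz\eta),\qquad \eta\in\hat\H^p(Z),\ \Lambda\in\Sigma_s,
\]
together with the fact that the residue points match: if $\eta^{[k]}$ has fibre degree $k$, then $\astz\eta^{[k]}$ carries fibre degree $f-k$, and $d_{f-k}=|f/2-(f-k)|=|k-f/2|=d_k$, so both $\res_{s=2d_k}E(\cdot,\eta^{[k]})$ and $\res_{s=2d_{f-k}}E(\cdot,\astz\eta^{[k]})$ are taken at the same spectral value $s=2d_k$. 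This is precisely where the termwise definition $\tE(\omega)=\sum_k\res_{s=2d_k}E(s,\omega^{[k]})$ is used.

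To establish the eigenform identity I would split $\hat\H^p(Z)=\H^p(M)\oplus du\wedge\astm\H^p(M)$ and treat the two summands separately, using the functional equation \eqref{emitstern}, namely $E(\Lambda,\astm\psi)=\ast E(\Lambda,du\wedge\psi)$, together with the Hodge-star formulas on the product $(Z,du^2+g^M)$, which read $\astz(du\wedge\beta)=\astm\beta$ and $\astz\psi=(-1)^p\,du\wedge\astm\psi$. For $\eta=du\wedge\psi$ the claim is immediate, since $\astz(du\wedge\psi)=\astm\psi$ turns \eqref{emitstern} into $\ast E(\Lambda,du\wedge\psi)=E(\Lambda,\astm\psi)=E(\Lambda,\astz(du\wedge\psi))$. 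For a $du$-free $\eta=\psi$ I would apply $\ast$ to \eqref{emitstern} and use $\ast^2=(-1)^{q(N-q)}$ on $X$ together with $\astm\astm=(-1)^{a(n-a)}$ on $M$ to rewrite $\ast E(\Lambda,\psi)$ as a scalar multiple of $E(\Lambda,du\wedge\astm\psi)$, and then check that this scalar equals $(-1)^p$, matching $\astz\psi$. Corollary \ref{zerlres} guarantees that the residues of the forms $E(\cdot,du\wedge\phi)$ occurring here again lie in $L^2\Omega^*(X)$, so the entire computation stays inside $\H_{(2)}^*(X)$.

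The main obstacle is this final sign bookkeeping: one must verify that the degree-dependent signs coming from $\ast^2$ on $X$, from $\astm^2$ on $M$, and from the product-metric formula for $\astz$ combine to give exactly $(-1)^p$, so that the unnormalized identity holds with coefficient $+1$ and the common prefactor $i^{p(p-1)+2l}$ can be divided out without residual sign. Because $X$ and $(Z,du^2+g^M)$ share the dimension $N$ and we operate in the single middle degree $p=2l$ throughout, all these exponents are evaluated at fixed values, so the verification is a finite, if fiddly, comparison rather than a genuinely new analytic input.
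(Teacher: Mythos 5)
Your proposal is correct and rests on the same ingredients as the paper's own proof: the functional equation \eqref{duformel}, $E(s,\astm\psi)=\ast E(s,du\wedge\psi)$, the product-metric formulas $\astz(du\wedge\phi)=\astm\phi$ and $\astz\phi=(-1)^p\,du\wedge\astm\phi$, and the matching of residue points via $d_{f-k}=d_k$. The only (harmless) difference is that you take residues of the meromorphic identity $\ast E(\Lambda,\eta)=E(\Lambda,\astz\eta)$ directly, whereas the paper routes through $dE(s,\astm\phi)=(s-2d_k)E(s,du\wedge\astm\phi)$ and evaluates at $s=2d_k$; your sign verification does close, since $p(n-p)+p(N-p)\equiv p\pmod 2$ for $n=N-1$.
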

\begin{proof}
Let $\phi\in \H^{p-k,k}(M)$ with $k<f/2$, $\phi\in \bild \tC^{[k]}$. Equations \eqref{duformel} and \eqref{deformel} show
\[
 dE(s,\astm\phi)=(s-2d_k)E(s,du\wedge\astm\phi)=(s-2d_k)\ast^2(\astm)^2\ast E(s,\phi)
\]
For $s\to 2d_k$:
\[
 dE(2d_k,\astm\phi)=\tE(du\wedge\astm\phi)=(-1)^p \ast\tE(\phi),
\]
and both $\tE(\phi)$ and are $\ast\tE(\phi)$ square integrable.
But now
\[
 \astz(du\wedge \phi)=\astm \phi, \quad \astz\phi=(-1)^p du\wedge \astm\phi,
\]
so that 
\[
 \ast\tE(\phi)=\tE(\astz\phi).
\]
Finally $\tE(\phi)=0$ for $\phi \in \H^{p-k,k}(M)$ with $k\ge f/2$ or $\phi\in \ker\tC^{[k]}$. 
\end{proof}


Let $h=2l$ and $\mathfrak{I}^k(M)\defgleich\bild \tC^{[k]}\subset \H^h(M), k<f/2$.
\[
\mathfrak{I}^k(Z)\defgleich  \mathfrak{I}^k(M)+du\wedge \astm \mathfrak{I}^k(M)\subset \hat\H^h(Z),\qquad 0\le k<f/2
\]
is invariant under $\tau_Z$. 
Let $Q_{k,\pm}\subset \mathfrak{I}^k(Z)$ be the  $\pm 1-$eigenspaces of $\tau_Z$ and $Q_{\pm}=\bigoplus_{k<f/2} Q_{k,\pm}$.

Let $W\defgleich\tE(\hat\H^h(Z))\subset \H_{(2)}^h(X)$ and
$W_\pm\defgleich\tE(Q_\pm)$.
Because of \eqref{tauswap} we have $W_\pm\subset \H_{\pm}^h(X)$.

\begin{lem}\label{gleichdim}
\[
 W=W_+\oplus_Q W_-\qquad\text{and}\qquad \dim W_+=\dim W_-
\]
\end{lem}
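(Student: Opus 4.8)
The plan is to establish the two assertions of Lemma \ref{gleichdim} in turn: the $Q$-orthogonal decomposition $W=W_+\oplus_Q W_-$, and then the equality $\dim W_+=\dim W_-$.

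\emph{The decomposition.} First I would check that $\tE$ annihilates every part of $\hat\H^h(Z)$ outside $\bigoplus_{k<f/2}\mathfrak{I}^k(Z)$. By Corollary \ref{zerlres} the residue $\tE$ vanishes on $\ker\tC^{[k]}$ and on all fiber degrees $\ge f/2$; moreover, for $\phi\in\H^{*,k}(M)$ with $k<f/2$, Proposition \ref{satz47} shows $E(\cdot,\astm\phi)$ is closed at $s=2d_k$ exactly when $\phi\in\ker\tC^{[k]}$, so $\tE(du\wedge\astm\phi)=0$ there as well (Proposition \ref{satz48}). Hence $W=\tE\bigl(\bigoplus_{k<f/2}\mathfrak{I}^k(Z)\bigr)=\tE(Q_+\oplus Q_-)=W_++W_-$. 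By the intertwining relation \eqref{tauswap}, $\tau_X\tE=\tE\tau_Z$, so $W_\pm=\tE(Q_\pm)\subset\H^{2l}_\pm(X)$, the $(\pm1)$-eigenspaces of the involution $\tau_X=\ast$; in particular $W_+\cap W_-=\{0\}$. Finally $\ast$ is a self-adjoint involution on $\H^{2l}_{(2)}(X)$ and $Q(a,b)=(a,\tau_X b)_{L^2}$ in middle degree (on real harmonic forms), so for $w_\pm\in W_\pm$ one gets $(w_+,w_-)=(\tau_X w_+,w_-)=(w_+,\tau_X w_-)=-(w_+,w_-)$, whence $(w_+,w_-)=0$ and $Q(w_+,w_-)=0$. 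This yields $W=W_+\oplus_Q W_-$.

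\emph{Reduction of the dimension count.} Next I would determine $\tau_Z$ on each $\mathfrak{I}^k(Z)=\mathfrak{I}^k(M)\oplus du\wedge\astm\mathfrak{I}^k(M)$. Using the product-metric identities $\astz\phi=(-1)^{h}du\wedge\astm\phi$ and $\astz(du\wedge\astm\phi)=\astm^2\phi$ together with $\astm^2=\mathrm{id}$ on $h$-forms of $M$ (since $h(n-h)$ is even) and $i^{h(h-1)+2l}=1$, I find $\tau_Z\phi=du\wedge\astm\phi$ and $\tau_Z(du\wedge\astm\phi)=\phi$; thus $\tau_Z$ interchanges the two summands of $\mathfrak{I}^k(Z)$. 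Consequently $Q_{k,\pm}=\{\phi\pm du\wedge\astm\phi\mid\phi\in\mathfrak{I}^k(M)\}$ and $\dim Q_{k,+}=\dim Q_{k,-}=\dim\mathfrak{I}^k(M)$, so $\dim Q_+=\dim Q_-$. It therefore suffices to prove that $\tE$ is injective on $Q_+\oplus Q_-=\bigoplus_{k<f/2}\mathfrak{I}^k(Z)$, for then $\dim W_\pm=\dim Q_\pm$ and the lemma follows.

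\emph{Injectivity, the crux.} The heart of the matter is that $\tE(\mathfrak{I}^k(M))$ and $\ast\tE(\mathfrak{I}^k(M))=\tE(du\wedge\astm\mathfrak{I}^k(M))$ meet only in $0$. Here I would use that, under (A) and (B), $\Pi_0\Delta_Z\Pi_0$ is block-diagonal for the splitting $\Omega^\ast(Z)=\pi_2^\ast\Omega^\ast(M)\oplus du\wedge\pi_2^\ast\Omega^\ast(M)$; consequently (comparing the expansions \eqref{aympt1} and \eqref{aympt2} via Proposition \ref{sasym8}) the only non-decaying, $u$-independent, $du$-free coefficient of $\Pi_0\tE(\phi)$ is $\tC^{[k]}(\phi)$ in fiber degree $k$, whereas $\Pi_0\tE(du\wedge\astm\psi)$ has no $u$-independent $du$-free coefficient at all, its leading part being of $du\wedge$-type. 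Hence if $\tE(\phi)=\ast\tE(\psi)$ with $\phi,\psi\in\bild\tC^{[k]}$, matching this distinguished coefficient forces $\tC^{[k]}(\phi)=0$; by Proposition \ref{residuen}, $\|\tE(\phi)\|^2=\sce{\tC^{[k]}(\phi)}{\phi}=0$, and since $\tC^{[k]}$ is positive-definite on $\bild\tC^{[k]}$ (Corollary \ref{zerlres}) we conclude $\phi=0$, and likewise $\psi=0$. Because the leading coefficients attached to distinct $k$ lie in distinct fiber degrees they are linearly independent, so $\tE$ is injective on $\bigoplus_{k<f/2}\mathfrak{I}^k(Z)$, completing the proof. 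The delicate step is precisely this injectivity: it requires reading off, from the analytic continuation, the precise leading asymptotics of $\tE(\phi)$ and $\tE(du\wedge\astm\phi)$ and knowing the two families do not mix their $\alpha$-type and $du\wedge$-type constant terms; this rests on the block-diagonality of $\Delta_Z$ on fiber-harmonic forms and on identifying the distinguished coefficient as $\tC^{[k]}(\phi)$, which emerges from $\res_{s=2d_k}$ in \eqref{aymptneu}, where $\lambda=s(2d_k-s)$ vanishes at $s=2d_k$ so that the fiber-degree-$l$ exponent becomes $a_l-d_l$, equal to $0$ precisely for $l=k<f/2$.
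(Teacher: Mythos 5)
Your overall route is the same as the paper's: identify $Q_{k,\pm}$ as the graphs $\{\phi\pm\tau_Z\phi\mid\phi\in\mathfrak{I}^k(M)\}$ so that $\dim Q_+=\dim Q_-$, observe $W=\tE(Q_+\oplus Q_-)$ and $W_\pm\subset\H_\pm^{h}(X)$ via \eqref{tauswap}, and reduce everything to the injectivity of $\tE$ on $Q_+\oplus Q_-$. The paper states this last point without proof (``$\tE(\eta)\neq 0$ for $\eta\in Q_+\oplus Q_-$''), and you are right that it is the crux; your decomposition, orthogonality and dimension-reduction steps are all correct, and your injectivity argument is sound for an element supported in a single fiber degree $k$.

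The gap is in the passage from one fiber degree to several, and it stems from a concrete error in your last sentence: the exponent $a_l-d_l$ vanishes for \emph{every} $l\le f/2$, not ``precisely for $l=k$''. Hence the $u$-independent, $du$-free part of $\Pi_0\tE(\phi)$ for $\phi\in\mathfrak{I}^k(M)$ is $\sum_{l<f/2}\tC^{[l]}(\phi)$ (exactly as in the formula preceding \eqref{einsch1}), not the single coefficient $\tC^{[k]}(\phi)$. For $\eta=\sum_{k<f/2}\bigl(\phi_k+du\wedge\astm\chi_k\bigr)$ your comparison of constant terms therefore only yields $\sum_{k<f/2}\tC^{[l]}(\phi_k)=0$ for each $l<f/2$: the ``leading coefficients attached to distinct $k$'' all occupy the same range of fiber degrees and are not a priori linearly independent, so you cannot conclude $\tC^{[k]}(\phi_k)=0$ for each $k$. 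What is missing is control of the off-diagonal residues $\tC^{[l]}\!\restriction_{\mathfrak{I}^k(M)}$ for $l\neq k$ — either a proof that they vanish, or that the matrix $\bigl(\tC^{[l]}\restriction_{\mathfrak{I}^k}\bigr)_{l,k<f/2}$ is injective on $\bigoplus_k\bild\tC^{[k]}$. Proposition \ref{residuen} and Corollary \ref{zerlres} only give self-adjointness and positivity of the diagonal blocks, since there the two residues are taken at the same point $s_0=2d_k$; for $j\neq k$ one would need a Maa{\ss}--Selberg-type identity for $\Scr{\res_{2d_j}E(\cdot,\phi_j)}{\res_{2d_k}E(\cdot,\phi_k)}$, which the paper does not provide. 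Until that is supplied, the injectivity of $\tE$ on $Q_+\oplus Q_-$ (and with it $\dim W_+=\dim W_-$) is established only degree by degree, not on the full sum.
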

\begin{proof}
All  differential forms under consideration have even total degree
$h$, so that $\tau=\ast$. First we show
 \[
 Q_{k\pm}=\{\phi\pm\tau_Z\phi\mid \phi\in \mathfrak{I}^k(M)\}, \quad 0\le k<f/2.
\]
Obviously the right-hand side is contained  in $Q_{k\pm}$. Let $\psi\in Q_{k\pm}$ and
write $\psi=\phi+du\wedge\astm \phi_1$ with $\phi, \phi_1\in \mathfrak{I}^k(X)$. 
Then 
\[
\tau_Z \psi=du\wedge \astm \phi+\phi_1 =\pm\psi.
\]
Contraction with $\frac{\partial}{\partial u}$ shows $\phi_1=\pm \phi$, which implies
$\dim Q_{k+}=\dim Q_{k-}$.

The further statements now follow from
 $\tE(\hat\H^h(Z))=\tE(Q_+\oplus Q_-)$ (by construction), and from $\tE(\eta)\neq 0$   for  $\eta\in Q_+\oplus Q_-$.
\end{proof}


%

Let $\psi\in\H_{(2)}^h(X)$ with $r[\psi]\neq 0$. We claim that then already 
$
r[\psi]\in\bigoplus_{k<f/2}H^{*,k}.
$
 Because $\psi$ is square integrable,
$e^{-\ua u}\psi|_Z$ must lie in $L^2\Omega(Z, du^2+g^M)$. For fiber-degrees $k\ge f/2$ by definition $\ua\leq 0$, so that
$\|\psi^{[k]}|_{\{u\}\times M}\|\xrightarrow[u\to 0]{} 0$. Because in addition $d\psi=0$, we conclude $r[\psi^{[k]}]=0$ as in Lemma \ref{randrham}.

Now Theorem \ref{thm1} proves the existence of a unique
\[
\phi_0\in \bild  \sum_{0\le j<f/2} \mathfrak{I}^j(M)\subset \H^h(M)
\quad\text{ so that }\quad
r[\tE(\phi_0)]=r[\psi].
\]
If $\psi\in\H_{(2)}^h(X)$ with $r[\psi]= 0$, let $\phi_0=0$.

Let $\psi_!=\psi-\tE(\phi_0)$. Since both $\psi$ and $\tE(\phi_0)$ are $L^2-$harmonic, the same must be true for $\psi_!$. In addition
$
r[\psi_!]=0,
$
so that $\psi_!\in\H_!(X)$.
We have
\[
\psi
=\frac{1}{2}\tE(\phi_0+\tau_Z\phi_0)
+\frac{1}{2}\tE(\phi_0-\tau_Z\phi_0)
   +\psi_! .
\]
In this way we have defined an isomorphism of vector spaces
\begin{align*}
 \rho:\H_{(2)}^h(X)&\to (W_+\oplus_Q W_-)\oplus \H_!(X)\\
\psi&\mapsto (\tE(\phi_0+\tau_Z\phi_0),\;\tE(\phi_0-\tau_Z\phi_0),\;\:\psi_!).
\end{align*}
To proof injectivity of $\rho$, let $\psi\in\H_{(2)}^h(X)$ with $\rho(\psi)=0$. From $\tE(\phi_0\pm\tau_Z\phi_0)=0$ we conclude $\phi_0=0$ and $\psi=\psi_!=0$. 

To see surjectivity, let $(w_+,w_-,\psi_!)\in (W_+\oplus_Q W_-)\oplus \H_!$.  Lemma \ref{gleichdim} shows the existence of $\phi_1, \phi_2 \in  \bigoplus_{0\le k<f/2}\mathfrak{I}^k(M)$ with
\[
 w_+=\tE(\phi_1+\tau_Z\phi_1),\quad w_-= \tE(\phi_2-\tau_Z\phi_2),
\]
and
$
 \psi\defgleich\tE(\phi_1+\tau_Z\phi_2)+\psi_!
$
is mapped to $\rho(\psi)=(w_+,w_-,\psi_!)$.

Because $W=W_+\oplus_Q W_-$ and $\H_{(2)}^h(X)$ are $\ast-$invariant and $\H_{(2)}^h(X)=W\oplus \H_!(X)$, also 
$\H_!(X)$ must be $\ast-$invariant. Finally
\begin{eqnarray*}
 L^2-\sign(X)&=&\dim \H_{+}^{h}(X)-\dim \H_{-}^{h}(X) \\
&=&\dim ( \H_!^h(X)\cap \H_{+}^{h}(X))-\dim ( \H_!^h(X)\cap \H_{-}^{h}(X)).
\end{eqnarray*}
This proves Proposition \ref{gleichesign}.
\end{proof}

\end{document}